\newtheorem{lemma}{Lemma}[section] 
\newtheorem{proposition}[lemma]{Proposition}
\newtheorem{corollary}[lemma]{Corollary}
\newtheorem{theorem}[lemma]{Theorem}
\newtheorem{definition}[lemma]{Definition}
\renewcommand{\imath}{\mathrm{i}}
\newcommand*{\doublenabla}{%
  \nabla\mkern-12mu\nabla
}   
\newcommand{\C}{\mathbb{C}}
\newcommand{\R}{\mathbb{R}}
\newcommand{\Z}{\mathbb{Z}}
\newcommand{\Hom}{\mathrm{Hom}}
\newcommand{\del}{\partial}
\newcommand{\ev}{\mathrm{ev}}
\newcommand{\extd}{\mathrm{d}}
\newcommand{\eps}{{\epsilon}}
\newcommand{\tens}{\mathop{{\otimes}}}
\newcommand{\id}{\mathrm{id}}
\renewcommand{\>}{\rangle}
\newcommand{\cX}{\mathfrak{X}}
\begin{document}

\title{Quantum geodesic flows and curvature}
\keywords{noncommutative geometry, quantum gravity, Ricci tensor, quantum mechanics, fuzzy sphere, quantum group, quantum sphere. Ver 1.2}

\subjclass[2010]{Primary 83C65;  81R50, 58B32, 46L87}

\author{Edwin Beggs and Shahn Majid}
\address{Queen Mary University of London\\
School of Mathematical Sciences, Mile End Rd, London E1 4NS, UK; Department of Mathematics, Bay Campus, Swansea University, SA1 8EN, UK}

\email{e.j.beggs@swansea.ac.uk,   s.majid@qmul.ac.uk}

\maketitle

\begin{abstract}We study  geodesics flows on curved quantum Riemannian geometries using a recent formulation in terms of bimodule connections and completely positive maps. We complete this formalism with a canonical $*$ operation on noncommutative vector fields. We show on a classical manifold how the Ricci tensor arises naturally in our approach as a term in the convective derivative of the divergence of the geodesic velocity field, and use this to propose a similar object in the noncommutative case. Examples include quantum geodesic flows on the algebra of 
 $2 \times 2$ matrices,  fuzzy spheres and the $q$-sphere. \end{abstract}

\section{Introduction}

Noncommutative geometry is the idea that we can extend geometric concepts to the case where `coordinate algebras' are noncommutative. On the physics side, a motivation is the {\em quantum spacetime hypothesis} that spacetime is better modelled by noncommutative coordinates due to quantum gravity effects. This was speculated upon at various points since the early days of quantum mechanics\cite{Sny} but in a modern era specific proposals for models appeared in \cite{Ma:pla,MaRue,Hoo,DFR} among others, but without the machinery of quantum Riemannian geometry as now available in a constructive form\cite{BegMa}. Rather, such models were dictated by ideas from quantum groups such as Hopf algebra duality and born reciprocity\cite{Ma:pla}, quantum group symmetry\cite{MaRue} and classical symmetry\cite{Hoo,DFR}. By now, a great many models are known with quantum metrics and quantum Riemannian curvature, including black hole models and finite models of quantum gravity, e.g. \cite{BegMa:gra,Ma:sq,ArgMa,LirMa} . The constructive formalism used in these works follows a `ground up' approach where we start with the `coordinate algebra' $A$, define a differential structure in terms of a differential graded algebra $(\Omega,\extd)$ of `differential forms', then a quantum metric $g\in \Omega^1\tens_A\Omega^1$, then a quantum Levi-Civita or other connection $\nabla:\Omega^1\to \Omega^1\tens_A\Omega^1$, its curvature $R_\nabla:\Omega^1\to \Omega^2\tens_A\Omega^1$, etc. There is also a practical `working definition' of Ricci and in some cases Einstein tensors  which sometimes produces reasonable results in the sense of vanishing divergence with respect to $\nabla$ but which is not canonical. One of our goals in the present paper is to come at the Ricci tensor from another angle for this reason. There is also a well-known and more sophisticated Connes approach to noncommutative geometry coming out of cyclic cohomology and K-theory\cite{Con} and with examples such as the noncommutative torus, as well as physical applications including ideas for the standard model of particle physics\cite{ConMar}. This particularly makes use of the notion of a `spectral triple' or abstract Dirac operator to implicitly encode the quantum geometry. Sometimes Connes spectral triples can be realised within the constructive quantum Riemannian geometry\cite{BegMa:spe,LirMa2}, i.e. there is a useful intersection between these approaches. 

Working in the constructive formalism as in \cite{BegMa}, we continue in the present work to explore the recently introduced notion of a `quantum geodesic'\cite{Beg:geo,BegMa:geo} and particularly how it interacts with curvature. Until now, only quantum geodesics on flat examples were worked out, such as the equilateral triangle\cite{Beg:geo} and the Heisenberg algebra with a flat linear connection\cite{BegMa:geo}. We now study quantum geodesic flows on known quantum Riemannian geometries on $M_2(\C)$ and the fuzzy sphere,  and we write down but do not explicitly solve for quantum geodesics on the $q$-sphere $\C_q[S^2]$.  A parallel work \cite{LiuMa} covers flat $\lambda$-Minkowski space from the point of view of physical predictions and also a curved (but not quantum-Levi Civita) connection on the noncommutative torus. 

In studying our models, we are led to considerably improve the quantum geodesic formalism itself, solving two  fundamental problems in the original work \cite{Beg:geo}. The first is about the $*$-operation needed for a unitary theory over $\C$, which previously was proposed via some requirements but without a canonical choice. This is now rectified in Theorem~\ref{thmXstar} under the assumption that the positive linear functional $\int:A\to \C$ needed for the theory is a twisted trace $\int ab=\int\varsigma(b)a$ for an algebra automorphism $\varsigma$. This is a common enough situation in noncommutative geometry. The other issue that we address is an auxiliary braid condition in \cite{Beg:geo,BegMa:geo} which is empty in the classical case but which turns out in our examples to be too restrictive. In Corollary~\ref{corXreal}, we now understand this condition as sufficient but not necessary for compatibility of the geodesic evolution with our canonical $*$-structure and propose a weaker `improved auxiliary  condition' to replace it. 

The formalism of quantum geodesics will be outlined in the preliminaries Section~\ref{secpre}, with the new general results in Section~\ref{secstar}. The formalism itself involves a radically new way of thinking about geodesics, even classically, and  Section~\ref{seccla} provides new results for the formalism applied in the classical case to a Riemannian manifold. Qualitatively speaking, the idea of quantum geodesics is not to follow one particle at a time but to think of a `fluid of particles' each moving along geodesics. In reality what would be the density will actually be a quantum-mechanical probability density $\rho=|\psi|^2$ for a complex wave function over the manifold. However, keeping the fluid analogy in mind,  the tangent vectors to all the geodesic motions can be viewed together as a {\em velocity vector field} $X$ on the manifold. The key point in \cite{Beg:geo} is that this vector field is characterised by a {\em geodesic velocity equation} as autoparallel with respect to a linear connection and can be solved for first, independently of the particles themselves. A further equation, which we call the {\em amplitude flow equation} is a Schroedinger-like equation on the wave function $\psi$ relative to $X$ that evolves it in such a way that $\rho$ corresponds to the particles at each point of the manifold having velocity given by $X$ at that point. Thus, we rip apart the usual notion of a geodesic into the particle locations and the particle velocities and then we put them back in reverse order first solving for $X$ and then for $\psi$.  This is a radically different approach and some detailed examples even on flat $\R^n$ are provided in \cite{LiuMa} to help with this conceptual transition.  Our new result in Section~\ref{seccla} in this context is that if $X$ is a geodesic velocity field then the convective derivative $D/D t$ along $X$ (defined as usual in fluid dynamics) obeys
\[ {D\over Dt}{\rm div}(X)=-(\nabla_\mu X^\nu)(\nabla_\nu X^\mu) - R_{\mu\nu}X^\mu X^\nu\]
where $X$ has components $X^\mu$ in local coordinates and $R_{\mu\nu}$ is the Ricci tensor associated to the connection. The latter would normally be the Levi-Civita connection but in fact the theory at this level does not require a metric, just a linear connection. This gives  a striking new way of thinking about the familiar connection in GR between the Ricci tensor and the change of position of nearby objects in geodesic motion. We also show how the classical role of the Riemann curvature as controlling geodesic deviation looks in this new language. In Section~\ref{secR}, we propose noncommutative versions of the two terms displayed above as $-F(X)$ and $-R(X)$ respectively, with the split suggested by good properties with respect to $*$. We see in the noncommutative examples how the `Ricci quadratic form' $R(X)$ compares with the naive Ricci tensor in the current non-canonical approach. We do not believe this to be the last word on the topic, but it can be viewed as a first  look at the problem from a fresh angle. The $2\times 2$ quantum matrices are treated in Section~\ref{secmat}, the fuzzy sphere in Section~\ref{secfuz} and the $q$-sphere in Section~\ref{secqsph}. By fuzzy sphere, we mean the standard quantisation of a coadjoint orbit in $su_2^*$ as a quotient of $U(su_2)$ by a fixed value of the quadratic Casimir, equipped now with the 3-dimensional but rotationally invariant differential calculus\cite[Example~1.46]{BegMa} and quantum Riemannian geometry from \cite{LirMa}. But the theory also applies to its finite-dimensional matrix algebra quotients which are also of interest\cite{Str,Var,Mad}. By $q$-sphere, we mean the base of the $q$-Hopf fibration as a subalgebra of the standard quantum group $\C_q[SU_2]$ in the theory of quantum principal bundles and with the quantum Riemannian geometry introduced in \cite{Ma:spi}. It is a member of the more general 2-parameter Podl\`es spheres\cite{Pod} but the only one for which the quantum Riemannian geometry has been explored. The paper has some concluding remarks in Section~\ref{secrem} with a discussion of directions for further work. 
 
\section{Preliminaries}\label{secpre}

Here we recall in more detail what a quantum geodesic is as proposed in\cite{Beg:geo} and studied further in \cite{BegMa:geo,LiuMa}. We explain the algebraic point of view which works even when the `coordinate algebra' of the spacetime is a possibly noncommutative unital algebra $A$. 

\subsection{Quantum Levi-Civita connections} \label{secqlc}

We suppose a differential structure in the form of an $A$-bimodule $\Omega^1$ of differential forms equipped with a map $\extd:A\to \Omega^1$ obeying the Leibniz rule 
\[ \extd(ab)=a\extd b+ (\extd a)b\]
and such that $\Omega^1$ is spanned by elements of the form $a\extd b$ for $a,b\in A$. This can always be extended to a full differential graded `exterior algebra' though not uniquely (there is a unique maximal one). In the $*$-algebra setting, we say we have a $*$-differential structure if $*$ extends to $\Omega$ (or at least $\Omega^1$) as a graded-involution (i.e., with an extra minus sign on swapping odd degrees) and commutes with $\extd$. A full formalism of quantum Riemannian geometry in this setting can be found in \cite{BegMa}. In particular, a metric means for us an element $g\in\Omega^1\tens_A\Omega^1$ which is invertible in the sense of a bimodule map $(\ ,\ ):\Omega^1\tens_A\Omega^1 \to A$ obeying the usual requirements as inverse to $g$. This forces $g$ in fact to be central. Then a QLC or quantum Levi-Civita connection is a bimodule connection $\nabla,\sigma$ on $\Omega^1$ which is metric compatible and torsion free in the sense in the sense 
\begin{equation}\label{gtor} \nabla g:=((\id\tens\sigma)(\nabla\tens\id)+ \id\tens\nabla)g=0, \quad  T_\nabla:=\wedge\nabla+\extd=0.\end{equation}
Here we prefer right bimodule connections $\nabla:\Omega^1\to \Omega^1\tens_A\Omega^1$ characterised by
\begin{equation}\label{nablaleib} \nabla(\omega.a)=\omega\tens\extd a+ (\nabla\omega).a,\quad \nabla(a.\omega)=\sigma(\extd a\tens\omega)+a\nabla\omega,\end{equation}
where the `generalised braiding' $\sigma:\Omega^1\tens_A\Omega^1\to \Omega^1\tens_A\Omega^1$ is assumed to exist and is uniquely determined by the second equation. Connections with the first, usual, Leibniz rule are standard while bimodule connections with the further rule from the other side appeared in \cite{DVMic,Mou}. 

There is an analogous theory of left bimodule connections with left and right swapped. In this paper, as in \cite{BegMa:geo}, we mostly prefer right bimodule connections, but we note that in the context where the generalised braiding is invertible we can go freely back and forth between a right $\nabla,\sigma$ as above and an equivalent left bimodule connection $\nabla^L,\sigma_L$ according to
\begin{equation}\label{nabnabL} 
\nabla^L=\sigma^{-1}\nabla,\quad\sigma_L=\sigma^{-1},\quad \nabla=\sigma_L^{-1}\nabla^L,\quad \sigma=\sigma_L^{-1}.\end{equation}
It will be useful to use both versions related in this way. We will also have recourse to a space of `left quantum vector fields' defined as the $A$-bimodule of left $A$-module maps
\[ \cX={}_A\hom(\Omega^1,A),\quad (a.X.b)(\omega)= (X(\omega.a))b\]
for all $\omega\in \Omega^1,a,b\in A$ and $X\in\cX$. Moreover, if $\nabla^L,\sigma_L$ is a left bimodule connection on $\Omega^1$ and the latter is finitely generated projective (f.g.p.) as a left $A$-module then $\cX $ canonically acquires a right bimodule connection $\nabla_\cX: \cX \to \cX \tens_A \Omega^1$ with $\sigma_\cX:\Omega^1\tens_A\cX
\to \cX \tens_A\Omega^1 $. Here, $\nabla_\cX$ obeys Leibniz rules as in (\ref{nablaleib}) but with $\omega\in \Omega^1$ replaced by $X\in \cX $ and $\sigma$ replaced by $\sigma_\cX$. We refer to \cite[Prop. 3.32]{BegMa} for details, but the key idea is that $\nabla_{\cX}$ is characterised as preserving the evaulation map  $\ev:\Omega^1\tens_A \cX \to A$, i.e.
\[
\extd\,\ev(\omega\tens X) = (\id\tens\ev)(\nabla^L(\omega)\tens X)+(\ev\tens \id)(\omega\tens \nabla_\cX(X))\ .
\]
for all $\omega\in\Omega^1$ and $X\in \cX $. The map $\sigma_\cX$ is uniquely determined from $\nabla_{\cX}$ but likewise obtained by dualisation of $\sigma_L$, see \cite[Prop. 3.80]{BegMa}. 

\subsection{$A$-$B$ bimodule connections and geodesic bimodules} 

So far we have discussed only linear connections $\nabla$ on $\Omega^1$ and $\nabla_\cX$ on $\cX$, but similar notions apply for (right) bimodule connections $\nabla_E:E\to E\tens_A\Omega^1$ on any $A$-bimodule $E$. Here, $E$ is  thought of as the space of sections of a vector bundle if $A$ is thought of as the coordinate algebra on the base. The generalised braiding $\sigma_E:\Omega^1\tens_A E\to E\tens_A\Omega^1$ is a bimodule map and the two Leibniz rules follow the same form as (\ref{nablaleib}). One has a notion of tensor product of $A$-bimodules with bimodule connection following the same form as $\nabla g$ in (\ref{gtor}).  Details are in \cite{BegMa} but omitted since we will need in fact a relative version, of which this is just the diagonal case. 

Thus, we will need the notion of an $A$-$B$ bimodule connection $\nabla_E$ on an $A$-$B$ bimodule $E$, where $B,\Omega^1_B$ is another algebra with differential calculus\cite[Def.~4.69]{BegMa}. This is a novel concept even in the classical case. For the right handed theory, $\nabla_E: E\to E\tens_B \Omega^1_B$ and
\[  \nabla_E(eb)=e\tens\extd b+(\nabla_E a)b,\quad \nabla_E(ae)=\sigma_E(\extd a\tens e)+a\nabla_E e   \]
for all $e\in E$, $a\in A$ and $b\in B$, for some $A$-$B$ bimodule map $\sigma_E: \Omega^1\tens_A E\to E\tens_B\Omega^1_B$. Moreover, if $E,F$ are respectively an $A$-$B$ bimodule with bimodule connection and a $B$-$C$ bimodule with bimodule connection (there are now potentially three algebras $A,B,C$ with differential calculi) then $E\tens_B F$ is an $A$-$C$ bimodule with bimodule connection by
\[ \nabla_{E\tens F}=(\id\tens \sigma_{F})(\nabla_E\tens\id) +\id\tens\nabla_F,\quad \sigma_{E\tens F}=(\id\tens\sigma_F)(\sigma_E\tens\id).\]
 In particular,  given an $A$-$B$ bimodule with bimodule connection, both domain $ \Omega^1\tens_A E$ and codomain $E\tens_B\Omega^1_B$ of $\sigma_E$ acquire tensor product $A$-$B$ bimodule connections given one on $E$, a bimodule connection $\nabla$ on $\Omega^1$  (it does not have to be a QLC), and a bimodule connection $\nabla_B$ on $\Omega^1_B$.  

The other ingredient we need is that if  $E,F$ are $A$-$B$ bimodules with bimodule connections then the set of $A$-$B$ bimodule maps $\phi:E\to F$ acquires a `covariant derivative'
\begin{align} \label{hets}
\doublenabla(\phi)=\nabla_F\phi-(\phi\tens\id)\nabla_E:  E\to  F\tens_B\Omega^1_B\ . 
\end{align}
which is easily seen to be a right $B$-module map. This is more familiar in the diagonal case, where classically  it has the meaning of the covariant derivative of $\phi$ viewed as an element of the dual of $E$ tensor with $F$. We then define a  {\em strict geodesic differential bimodule} as an $A$-$B$ bimodule with bimodule connection $\nabla_E,\sigma_E$ such that  $\doublenabla(\sigma_E)=0$. We can weaken this condition.

\begin{lemma}\label{vyuid} (1) Let $E$ be an $A$-$B$ bimodule with bimodule connection, $\nabla,\sigma,\nabla_B,\sigma_B$  (right) bimodule connections on $\Omega^1,\Omega^1_B$ respectively. Then the obstruction to $\doublenabla(\sigma_E)$ being a left $A$-module (and hence $A$-$B$ bimodule) map is the mixed braid relations:
\begin{align*}
\doublenabla(\sigma_E)&(a\,\omega\tens e)-a\,\doublenabla(\sigma_E)(\omega\tens e)\\
 &= \big((\id\tens\sigma_B)(\sigma_E\tens\id)(\id\tens\sigma_E) -(\sigma_E\tens\id)(\id\tens\sigma_E)(\sigma\tens\id)
\big)(\extd a\tens\omega\tens e)
\end{align*}
for all $a\in A, \omega\in\Omega^1, e\in E$. 

(2) Suppose further that $\sigma$ is invertible and $E$ also has a (possibly unrelated) {\em left} $A$-$B$ bimodule connection $\hat\nabla_E,\hat\sigma_E$. Let $\alpha:\Omega^1\tens_A E\to E\tens_B\Omega^1_B\tens_B\Omega^1_B$ be
\begin{align*}
\alpha :=\doublenabla(\sigma_E)- \big((\id\tens\sigma_B)(\sigma_E\tens\id)(\id\tens\sigma_E) -(\sigma_E\tens\id)(\id\tens\sigma_E)(\sigma\tens\id)  \big)  \hat\nabla_{\Omega^1\tens E},
\end{align*}
where we use the left tensor product connection on $\Omega^1\tens_A E$ with $\nabla^L=\sigma^{-1}\nabla$  the associated left connection on $\Omega^1$. This is a left $A$-module map and the obstruction to being a right $B$-module (and hence $A$-$B$ bimodule) map is
\begin{align*}
\alpha&(\omega\tens e.b)-\alpha(\omega\tens e).b \\
&= - \Big( \big((\id\tens\sigma_B)(\sigma_E\tens\id)(\id\tens\sigma_E) -(\sigma_E\tens\id)(\id\tens\sigma_E)(\sigma\tens\id)  \big)    (\sigma^{-1}\tens\id) (\id\tens\hat\sigma_E)\Big)(\omega\tens e \tens\extd b)
\end{align*}
for all $b\in B, \omega\in\Omega^1, e\in E$. 
\end{lemma}
\proof We begin with (\ref{hets}) and use the a result from \cite[p. 302]{BegMa}, but in the right handed version, to give
\[
\doublenabla(\phi)(a\, e)-a\,\doublenabla(\phi)( e) = \big(\sigma_F(\id\tens\phi)-(\phi\tens\id)\sigma_E
\big)(\extd a\tens e)\ .
\]
If we set $\phi$ to be $\sigma_E$ and use the appropriate $\sigma$ for its domain and codomain then we get the first displayed equation.
From this it follows that $\alpha$ is a left $A$-module map, given the left Leibniz rule. The last equation then follows because 
$\doublenabla(\sigma_E)$ is necessarily a right module map, while $\hat\nabla_{\Omega^1\tens E}$ is a left bimodule connection requiring 
$(\sigma^{-1}\tens\id) (\id\tens\hat\sigma_E)$ from its generalised braiding. 
 \endproof

Part (1) of the lemma says that a slight generalisation of a strict geodesic bimodule, namely to just require that $\doublenabla(\sigma_E)$ be a bimodule map, is equivalent to the mixed braid relation between $\sigma_B,\sigma_E,\sigma$. This braid relation appeared  as an `auxiliary condition' in the analysis of the $\doublenabla(\sigma_E)=0$ case  in previous work\cite{Beg:geo,BegMa:geo}. In this slightly more general case, $\alpha:=\doublenabla(\sigma_E)$ is a bimodule map and has the interpretation of an external driving force, but we still have the  auxiliary braid condition which turns out to be too restrictive for key examples of interest in this paper. 

We therefore have to drop that $\doublenabla(\sigma_E)$ is a bimodule map. Part (2) of the lemma says that we can then modify $\alpha$ as stated to potentially still obtain a bimodule map if we have the additional data of a second connection $\hat\nabla_E,\hat\sigma_E$ subject to the weaker braid condition stated.  For example, $\hat\sigma_E=0$ would automatically ensure that $\alpha$ is a bimodule map, again interpreted as an external driving force. We refer to this situation where the braid relation is not entailed as a {\em nonstrict geodesic bimodule} with or without external force $\alpha$. Being a bimodule map, it is only then natural to set $\alpha=0$ if we want. Note that $\sigma_E,\hat\sigma_E$ have no reason to be invertible when $A\ne B$, as they map to very different spaces.  

\subsection{Geodesic velocity field equations } Having prepared the algebraic background, we now see how these ideas relate to geodesic flows. Here and for the rest of the paper, we focus on the case $E=A\tens B$ with its canonical $A$-$B$ bimodule structure. In this case
\[ \hat\nabla_E(a\tens b)=\extd a\tens 1\tens b,\quad \hat\sigma_E=0\]
is a natural reference connection for our nonstrict geodesic bimodule, and we fix this throughout. We also identify $\Omega^1\tens_AE=\Omega^1\tens B$ in the standard way and in this case, since $\hat\nabla_E(1\tens b)=0$, we have
\[ \hat\nabla_{\Omega^1\tens E}(\omega\tens b)= \nabla^L\omega\tens b,\]
where we assume throughout that $\sigma$ is invertible so that $\nabla^L=\sigma^{-1}\nabla$ is an equivalent left connection on $\Omega^1$. Moreover, $\alpha$ being a bimodule map needs only to be specified on $\Omega^1\tens 1$, where we see that
\[ \alpha(\omega\tens 1)=\doublenabla(\sigma_E)(\omega\tens 1)- \big((\id\tens\sigma_B)(\sigma_E\tens\id)(\id\tens\sigma_E) -(\sigma_E\tens\id)(\id\tens\sigma_E)(\sigma\tens\id)  \big)( \nabla^L\omega\tens 1).\]

Of interest for geodesics, and which we also fix now for the rest of the paper, is the choice $B=C^\infty(\R)$ where $\R$ refers to the geodesic time $t$. We fix the classical calculus $\Omega^1_B=B\extd t$ with a central basis $\extd t$ and $\nabla_B\extd t=0$. The map $\sigma_B$ is the classical `flip' but $\Omega^1_B\tens_B\Omega^1_B=B\extd t\tens\extd t$ so that $\sigma_B=\id$ after these identifications. Hence in this case
\begin{equation} \label{pfu}
\alpha(\omega\tens 1)=\doublenabla(\sigma_E)(\omega\tens 1)-(\sigma_E\tens\id)(\id\tens\sigma_E)\big(( \id-\sigma)\nabla^L\omega\tens 1\big),  \end{equation}
which is the equation that we will use. Setting $\alpha=0$ will describe quantum geodesics but fixing $\alpha$ as an external bimodule map is a natural generalisation beyond this. We no longer entail the braid relations discussed above because the correction term in the expression for $\alpha$ compensates for the failure of this.

Next, for our choice of $E$, we can also take $\nabla_E$  in a standard form\cite[Prop.~5.1]{Beg:geo}
\begin{equation}\label{nablaE}\nabla_E e=(\dot e+X_t(\extd e)+e\kappa_t)\tens\extd t,\quad \sigma_E(\omega\tens e)=X_t(\omega.e)\tens\extd t\end{equation}
 given by a time-dependent left quantum vector field $X_t$ and a time dependent element $\kappa_t$ of $A$. We similarly note that an $A$-$B$  bimodule map $\alpha:\Omega^1\tens B\to A\tens B\extd t\tens \extd t$ just amounts to a fixed (not time dependent) left quantum vector field $Y:\Omega^1\to A$.  

\begin{proposition}\label{veleq} For $E=A\tens B$ in the setting above, the requirement of a nonstrict geodesic bimodule with external driving force $Y\in\cX$ reduces to the {\em geodesic velocity equations} 
\[ \dot X_t(\omega)+ [X_t,\kappa_t](\omega)+X_t(\extd X_t(\omega))-X_t(\id\tens X_t)\nabla^L\omega=Y(\omega)\]
for all $\omega\in \Omega^1$.
\end{proposition}
\begin{proof}  The calculation is essentially the same as a right-handed version of the start of the proof of \cite[Prop.~5.2]{Beg:geo} before $\doublenabla(\sigma_E)=0$ was imposed there. Namely, omitting the  $t$ on $X_t$ for brevity and identifying $\omega\tens_A 1_E=\omega\tens 1$ where $1_E=1\tens 1\in A\tens B$, we have
\begin{align*}
\doublenabla(\sigma_E)(\omega\tens 1) &= (\nabla_{E\tens\Omega^1_B}\sigma_E-(\sigma_E\tens \id)\nabla_{\Omega^1\tens E})(\omega\tens 1)\\
&= \nabla_{E\tens\Omega^1_B}(X(\omega)\tens\extd t)-(\sigma_E\tens \id)(\id\tens\sigma_E)(\nabla \omega \tens 1)-(\sigma_E\tens\id)(\omega\tens\kappa\tens\extd t)
\end{align*}
since $\nabla_{\Omega^1\tens E}=(\id\tens\sigma_E)(\nabla\tens\id)+ \id\tens\nabla_E$ and $\nabla_E 1_E=\kappa\tens\extd t$. We put this and  $\alpha(\omega\tens 1)=Y(\omega)\tens \extd t\tens\extd t$ into equation (\ref{pfu}) to obtain
\begin{align*}
Y(\omega)\tens\extd t\tens\extd t&=\doublenabla(\sigma_E)(\omega\tens 1) -(\sigma_E\tens\id)(\id\tens\sigma_E)\big(( \id-\sigma)\nabla^L\omega\tens 1\big) \\
 &= \nabla_{E\tens\Omega^1_B}(X(\omega)\tens\extd t)-(\sigma_E\tens \id)(\id\tens\sigma_E)(\nabla^L\omega\tens 1) - \sigma_E(\omega\tens\kappa)\tens\extd t\\
 &= \big( \dot X(\omega)+ X(\extd(X(\omega)))+X(\omega)\,\kappa- X(\id\tens X)\nabla^L\omega  -X(\omega\,\kappa)
 \big)\tens\extd t\tens\extd t
\end{align*}
as stated. Here $\sigma\nabla^L=\nabla$ was used in the cancellation for the second equality and we then further substituted  $\nabla_E,\sigma_E$ in terms of $X,\kappa$. 
  \end{proof}

 If  $\Omega^1$ is f.g.p.  then the  geodesic velocity equation in Proposition~\ref{veleq} is equivalent by a straightforward dualisation (following analogous steps to those in the proof of \cite[Cor.~2.3]{BegMa:geo}) to 
\begin{equation}\label{coveleq}  \dot X_t +[X_t,\kappa_t]+(\id\tens X_t)\nabla_\cX(X_t)=Y\end{equation}
 in terms of the right connection on $\cX$  dual to $\nabla^L$ explained at the end of Section~\ref{secqlc}. We see that the case $Y=0$  says in the classical limit that $X$ is autoparallel with respect to $\nabla_\cX$, i.e., the tangent vector field to a field of generalised geodesics in this sense (and in the usual sense if the connection is the Levi-Civita one). By analogous steps to the rest of the proof of \cite[Cor.~2.3]{BegMa:geo}, the auxiliary braid condition, which we have now seen is equivalent to $\doublenabla(\sigma_E)$ being a bimodule map (such as zero), can be written as 
\begin{equation}\label{oldaux} \sigma_{\cX\cX}(X\tens X)=X\tens X\end{equation}
for a certain generalised braiding $\sigma_{\cX\cX}$.  This equation will turn out to be too strong in key examples and forces us to the nonstrict case. We will, however, meet $\sigma_{\cX\cX}$ later, in Corollary~\ref{corXreal} in relation to $*$-operations.

Proceeding with $Y=0$, flows are then obtained by `integrating' $X_t$ and are characterised in a quantum mechanical  Schroedinger's equation like manner by $\nabla_E e=0$. Here $\rho=e^*e$  depends on $t$ as $e$ depends on $t$ and evolves in the classical limit as one might expect for the density of a fluid, where each particle moves with tangent vector $X_t$ evaluated at the location of the particle. (This is geodesic flow if $X_t$ obeys the geodesic velocity equation but applies generally for any flow of this type.) There is, however, one condition we need to ensure, which is that there is a positive linear functional, which we will denote $\int: A\to \C$, such that $\int \rho$ is constant in time (so can be normalised to 1). For this to happen, we need the hermitian inner product $\<e,f\>:=\int(e^*f)$ on $E$ to be preserved by $\nabla_E$, which comes down to the two {\em unitarity conditions}
 \begin{equation}\label{unitarity} \int \big(\kappa^*_t a+a\kappa_t+X_t(\extd a)\big)=0,\quad \int \big(X_t(\omega^*)-X_t(\omega)^*\big)=0\end{equation}
for all $a\in A, \omega\in\Omega^1$. In the classical limit and in the case of the Levi-Civita connection, we would take for $\int$ the Riemannian measure defined by the metric. Moreover, the first of (\ref{unitarity}) for all $a$ would amount in the classical limit to the local condition
\begin{equation}\label{kappaeq} \kappa_t+\kappa_t^*={\rm div}_\nabla X_t\end{equation}
(as reviewed in Lemma~\ref{divrot} below). In the quantum case, we can replace ${\rm div}_\nabla$ by
a divergence naturally defined by $\int$ and in this case we can set $\kappa_t$ to be ${1\over 2}$ of this divergence to similarly solve the first condition. Classically, this means choosing $\kappa_t$ real and in this case   $\nabla_E e=0$ reduces to 
\[ {D e\over D t}+ {e\over 2}{\rm div}_\nabla X_t=0\]
as expected classically for a half-density. Here, if  we have a dust of particles moving with (possibly time dependent) velocity field $X_t$ then the rate of change of any time-dependent scalar field $e$ on the manifold as computed moving with the flow is the {\em convected derivative}
\[ {D f\over D t}:=\dot f+ X(\extd f). \]
The second part of the unitarity condition (\ref{unitarity}) in the quantum case, being true for all $\omega$, determines how $*$ acts on the $X_t$ and reduces in the classical case to $X_t$ a real vector field if we take the standard measure.

\section{States and divergence of the velocity equation on a classical manifold}\label{seccla}

In order to progress the quantum geometry further in the present paper, we first revisit the classical case of our point of view with some new classical results, notably involving the Ricci tensor. We then look at what we can say in the quantum case. 

\subsection{Classical divergence, fluid dust and the Ricci tensor}

Consider an orientated Riemannian manifold $M$ with metric $g$ and its standard measure $\mu$, which is given on each coordinate chart by
\[
\int_M f\, \extd\mu = \int f \sqrt{|g|} \,\extd x^1\dots\extd x^n, 
\]
where $f$ is a function on $M$ supported in the chart and $|g|=|\det(g_{ij})|$.  We use local coordinates and index comma notation for partial derivative and semicolon for covariant derivative.  The geometric divergence of a vector field $X$ with respect to a connection is then defined as usual by ${\rm div}_\nabla X=X^{i}{}_{;i}$. We start by recalling a well-known lemma needed for the exposition.

\begin{lemma} \label{divrot} For the standard measure $\mu$ on a Riemannian manifold,  any vector field $X$ on $M$ and any  $f\in C^\infty(M)$, we have
\[
\int (X(\extd f)+ X^{i}{}_{;i}  \,  f)\,\extd \mu =0\ .
\]
\end{lemma}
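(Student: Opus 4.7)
The plan is to recognize the integrand as the divergence of the vector field $fX$ and then apply the standard divergence identity for the Riemannian measure (assuming implicitly that either $M$ is compact without boundary, or that the supports of $f$ and $X$ are compact, so that boundary contributions vanish).

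First I would compute, using that the covariant derivative of a scalar coincides with the partial derivative,
\[
(fX^i)_{;i} = f_{,i} X^i + f\, X^i{}_{;i} = X(\extd f) + f\,{\rm div}_\nabla X,
\]
which is exactly the integrand in the statement. Thus the claim is reduced to showing $\int_M {\rm div}_\nabla(fX)\,\extd\mu = 0$.

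Next I would recall the coordinate formula, valid for the Levi-Civita connection of $g$, that
\[
Y^i{}_{;i} = Y^i{}_{,i} + \Gamma^i{}_{ij} Y^j = \frac{1}{\sqrt{|g|}}\,\partial_i\!\bigl(\sqrt{|g|}\,Y^i\bigr),
\]
using the standard identity $\Gamma^i{}_{ij} = \partial_j\log\sqrt{|g|}$, which follows from Jacobi's formula applied to $\log\det(g_{ij})$. Applying this to $Y = fX$ and integrating against the Riemannian volume form gives, chart-wise,
\[
\int {\rm div}_\nabla(fX)\,\sqrt{|g|}\,\extd x^1\cdots\extd x^n = \int \partial_i\!\bigl(\sqrt{|g|}\,(fX)^i\bigr)\,\extd x^1\cdots\extd x^n,
\]
which is zero by the fundamental theorem of calculus on each chart, under the compact support assumption. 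A partition of unity then assembles this into the global statement.

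The only real obstacle is bookkeeping: if one wants the lemma stated for arbitrary (not necessarily compactly supported) $X$ and $f$ on a general Riemannian manifold, one must either restrict to compactly supported data, assume $M$ is compact without boundary, or add a suitable decay hypothesis so that boundary terms at infinity vanish. With any such standard convention in force, the two steps above complete the proof.
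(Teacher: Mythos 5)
Your proposal is correct and is essentially the same argument as the paper's: both hinge on the identity $\Gamma^j{}_{ji}=\partial_i\log\sqrt{|g|}$ and on a chart-wise integration by parts (writing the integrand as a total divergence of $fX$ is just the repackaged form of the paper's step of moving the derivative from $f$ onto $\sqrt{|g|}\,X^i$). Your explicit remark about needing compact support or decay to kill boundary terms is a point the paper leaves implicit in its ``without loss of generality'' localisation, but it does not change the substance of the proof.
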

\begin{proof} 
The property we want to prove is linear in $X$, so without loss of generality we choose the support of $X$ to be contained in a particular coordinate chart.  Then, by usual integration by parts,
\[
\int \sqrt{|g|} \,X^i\, f_{,i} \extd x^1\dots\extd x^n = - \int  f(   X^{i}{}_{,i}    \sqrt{|g|}  + X^{i}  \frac{\partial}{\partial x^i}  \sqrt{|g|}     ) \extd x^1\dots\extd x^n 
\]
and
\[
 (X^{i}{}_{,i}  + X^{i} ( \frac{\partial}{\partial x^i}  \sqrt{|g|} )/  \sqrt{|g|}   )  =
 (X^{i}{}_{,i}  + X^{i}  \Gamma^j{}_{ji})   =   \,X^{i}{}_{;i}.   
\]
We used the usual formula for the Christoffel symbols for the Levi-Civita connection. 
\end{proof} 

\medskip

We now examine the dynamic behaviour of this divergence when the vector field is our time-dependent $X_t$. The classical limit of the geodesic velocity equation is the autoparallel equation
\begin{align} \label{velclass}
 \frac{\partial X^i}{\partial t}  + X^s\, X^i{}_{,s}  &+ X^k\,X^j \,\Gamma^i{}_{jk}
 =0\ .
\end{align}
If we start with an intital vector field $X_0$ on $M$ and imagine that $M$ is filled with particles of dust each moving according to geodesic motion beginning with velocity $X_0$ at their starting point, then the velocity field at later proper time $t$ will become $X_t$  obeying this equation. Our new result is the following. 

\begin{proposition} \label{riccinots} If time dependent $X$ obeys (\ref{velclass}) as needed for geodesic flow then 
\begin{align*}
\frac{D\, |X|^2}{D t} =0,\quad \frac{D\, \mathrm{div}_\nabla X}{D t} =   -X^s{}_{;i}\, X^i{}_{;s}    -   X^k\,X^r \,R_{kr} 
\end{align*}
where $R_{ij}$ is the Ricci tensor.
\end{proposition}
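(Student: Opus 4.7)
The plan is to treat both identities as direct computations of the convective derivative $D/Dt = \partial_t + X^s\partial_s$ acting on scalar fields. The one piece of preparation is to rewrite the autoparallel equation (\ref{velclass}) in the two equivalent forms
\[
\partial_t X^i = -X^s X^i{}_{;s},\qquad \tfrac{D X^i}{Dt} = -X^jX^k\,\Gamma^i{}_{jk},
\]
by absorbing the $\Gamma$ terms into the covariant derivative in the first case, and leaving them outside in the second. The first form is what will drive the Ricci computation; the second form is a symmetric quadratic in $X$ that drives the cancellation in the norm calculation.

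For the norm identity, conceptually the autoparallel equation says that the integral curves of $X$ are geodesics and $X$ is parallel along them, so $|X|^2$ is constant along each streamline and hence has vanishing convective derivative. At the index level I would expand $|X|^2 = g_{ij}X^iX^j$ and apply $D/Dt$ to obtain $2g_{ij}X^j\tfrac{DX^i}{Dt} + X^sX^iX^j\,\partial_s g_{ij}$. The autoparallel equation turns the first summand into $-2g_{ij}X^jX^kX^l\Gamma^i{}_{lk}$, and metric compatibility $\partial_s g_{ij}=\Gamma^k{}_{si}g_{kj}+\Gamma^k{}_{sj}g_{ik}$ together with the $(i,j)$-symmetry of $X^iX^j$ rewrites the second as $2g_{kj}X^jX^iX^s\Gamma^k{}_{si}$. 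Torsion-freeness $\Gamma^k{}_{si}=\Gamma^k{}_{is}$ and a relabel of dummy indices show the two pieces cancel.

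For the divergence identity, since $X^i{}_{;i}$ is a scalar and $\Gamma$ is $t$-independent,
\[
\tfrac{D X^i{}_{;i}}{Dt} = (\partial_t X^i)_{;i} + X^s X^i{}_{;i;s}.
\]
Substituting $\partial_t X^i = -X^j X^i{}_{;j}$ and applying Leibniz for $\nabla_i$ gives $(\partial_t X^i)_{;i} = -X^j{}_{;i}X^i{}_{;j} - X^j X^i{}_{;j;i}$, so after relabeling,
\[
\tfrac{D X^i{}_{;i}}{Dt} = -X^s{}_{;i}X^i{}_{;s} + X^s\bigl(X^i{}_{;i;s}-X^i{}_{;s;i}\bigr).
\]
The bracket is the contracted commutator $\nabla_s\nabla_iX^i - \nabla_i\nabla_s X^i$; the Ricci identity $[\nabla_a,\nabla_b]V^c = R^c{}_{dab}V^d$ applied to $V^c=X^c$ and then traced on $c=i$, together with the antisymmetry $R^i{}_{dsi}=-R^i{}_{dis}$ and the trace convention $R_{ds}=R^a{}_{das}$, evaluates it to $-R_{ds}X^d$. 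This delivers the stated $-R_{kr}X^kX^r$.

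The entire derivation is essentially index-bookkeeping; the one genuinely delicate step is confirming that the contracted commutator yields the Ricci tensor with the sign needed in the final formula, which is a single application of the standard Ricci identity combined with one antisymmetry of Riemann. No Bianchi identity, no global analysis and, importantly, no special properties of the Levi-Civita connection beyond metric compatibility and torsion-freeness are needed, which is consistent with the authors' remark that at this level a linear connection suffices.
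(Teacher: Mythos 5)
Your proof is correct, and for the main identity it takes a genuinely different (and cleaner) route than the paper. For $\tfrac{D\,|X|^2}{Dt}=0$ both arguments are essentially identical: the paper writes the convected derivative with $g_{ij;k}=0$ and cancels against the autoparallel equation, which is exactly your computation phrased with $\partial_s g_{ij}$ expanded via metric compatibility (your appeal to torsion-freeness there is actually superfluous, since $\Gamma$ is contracted against the symmetric $X^kX^l$ in both terms). For the divergence identity the paper works entirely at the level of partial derivatives and Christoffel symbols: it expands $\mathrm{div}_\nabla X = X^i{}_{,i}+X^i\Gamma^j{}_{ji}$, substitutes (\ref{velclass}), and then pattern-matches the surviving $\Gamma^i{}_{rk,i}+\Gamma^i{}_{rk}\Gamma^j{}_{ji}-\Gamma^j{}_{jr,k}$ block against the coordinate formula for $R_{\sigma\nu}=R^\rho{}_{\sigma\rho\nu}$. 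You instead rewrite the velocity equation covariantly as $\partial_t X^i=-X^jX^i{}_{;j}$, apply Leibniz, and identify the leftover $X^s(X^i{}_{;i;s}-X^i{}_{;s;i})$ as a contracted curvature commutator. I checked that with the paper's sign convention for $R^c{}_{dab}$ the Ricci identity reads $[\nabla_a,\nabla_b]X^c=R^c{}_{dab}X^d$, so your trace $R^i{}_{dsi}=-R_{ds}$ indeed gives the stated $-X^kX^rR_{kr}$. What each approach buys: the paper's is self-contained and verifies the precise normalisation of their Ricci formula by brute force; yours makes it conceptually transparent that the Ricci term is exactly the obstruction to commuting $\nabla_s$ past the divergence, at the cost of importing the Ricci identity (which, as you note, requires torsion-freeness — with torsion an extra $T^d{}_{si}\nabla_dX^i$ term would appear, a caveat the paper's coordinate computation also implicitly carries). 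One small imprecision in your closing remark: metric compatibility is needed only for the $|X|^2$ identity; the divergence identity uses no metric at all, which is the sense of the authors' comment that a linear connection suffices.
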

\begin{proof} For the first part $|X|^2$ is the length squared of $X$ with respect to the metric. Then the convected derivative is
\begin{align*}
\frac{D\, |X|^2}{D t} &= \dot X^i X^j g_{ij} + X^i \dot X^j g_{ij} + X^k (X^i X^j g_{ij} )_{;k} \\
&= \dot X^i X^j g_{ij} + X^i \dot X^j g_{ij} + X^k X^i{}_{;k} X^j g_{ij} + X^k X^i X^j{}_{;k} g_{ij} =0
\end{align*}
where we have used $g_{ij;k}=0$. 
For the second part, the convected derivative is
\begin{align*}
\frac{D\, \mathrm{div}_\nabla X}{Dt}  &= \frac{\partial \, \mathrm{div}_\nabla X}{\partial t}+X^k\,\frac{\partial \, \mathrm{div}_\nabla X}{\partial x^k}
\cr
&= \dot X^{i}{}_{,i}  + \dot X^{i}  \Gamma^j{}_{ji}+ X^k X^{i}{}_{,ik}  +  X^k X^{i}{}_{,k}  \Gamma^j{}_{ji}
+  X^k X^{i}  \Gamma^j{}_{ji,k}
\end{align*}
and substituting from (\ref{velclass}) gives
\begin{align*}
\frac{D\, \mathrm{div}_\nabla X}{Dt} &= -X^s{}_{,i}\, X^i{}_{,s}  -  X^k{}_{,i}\,X^r \,\Gamma^i{}_{rk} - X^s\, X^i{}_{,si}  - X^k\,X^r{}_{,i} \,\Gamma^i{}_{rk}    -   X^k\,X^r \,\Gamma^i{}_{rk,i}     \cr
& \quad  -(X^s\, X^i{}_{,s}  + X^k\,X^r \,\Gamma^i{}_{rk}) \Gamma^j{}_{ji}+ X^k X^{i}{}_{,ik}  +  X^k X^{i}{}_{,k}  \Gamma^j{}_{ji}
+  X^k X^{i}  \Gamma^j{}_{ji,k}    \cr
&= -X^s{}_{,i}\, X^i{}_{,s}  -  X^k{}_{,i}\,X^r \,\Gamma^i{}_{rk}  - X^k\,X^r{}_{,i} \,\Gamma^i{}_{rk}  \cr
&\quad   -   X^k\,X^r \,(\Gamma^i{}_{rk,i}    
+ \Gamma^i{}_{rk} \Gamma^j{}_{ji}
-    \Gamma^j{}_{jr,k} )   \ .
\end{align*}
Now recall that the Ricci tensor is, in terms of Christoffel symbols
\[
R_{\sigma\nu}   = R^\rho{}_{\sigma\rho\nu} = \Gamma^\rho{}_{\nu\sigma,\rho}
    - \Gamma^\rho{}_{\rho\sigma,\nu}
    + \Gamma^\rho{}_{\rho\lambda}\Gamma^\lambda{}_{\nu\sigma}
    - \Gamma^\rho{}_{\nu\lambda}\Gamma^\lambda{}_{\rho\sigma}
\]
and in terms of this, we have
\begin{align*}
\frac{D\, \mathrm{div}_\nabla X}{Dt} 
&= -X^s{}_{,i}\, X^i{}_{,s}  -  X^k{}_{,i}\,X^r \,\Gamma^i{}_{rk}  - X^k\,X^r{}_{,i} \,\Gamma^i{}_{rk} -   X^k\,X^r \,\Gamma^i{}_{rj}\Gamma^j{}_{ik}   -   X^k\,X^r \,R_{kr}   \cr
&=  -X^s{}_{;i}\, X^i{}_{;s}    -   X^k\,X^r \,R_{kr} \ .
\end{align*}
\end{proof} 

\medskip
Thus, the speed along the geodesic flow is constant as expected, while the convected derivative (i.e.\ in the frame of the moving material) of the divergence is given by a dynamic `kinetic' part expressing the varying velocity field plus a geometric part consisting of the Ricci curvature as a quadratic form on the velocity field. 

\subsection{Geodesic deviation and convective derivatives}\label{secgeodev}
The classical idea of geodesic deviation imagines a given geodesic displaced an infinitesimal amount in the direction $Z$, so that instead of position $P(t)$ we have $P(t)+h\,Z(t)$ for a small parameter $h$. Thus along a particular geodesic we have the velocity $X=\dot P$ and $\delta X=\dot Z$ for the change in $X$ with respect to the parameter $h$. Then the acceleration of $Z$ along the geodesic is given by the curvature applied to $Z$ and $X$, i.e.\ the equation of geodesic deviation. 

In our case we do not consider a fixed geodesic but rather we have a time-dependent geodesic velocity field $X(t)$ which obeys the geodesic velocity equation $\dot X+\nabla_X X=0$. The above usual picture gets modified, with the perturbation now determined by a time dependent vector field $Z(t)$ in a similar role.  Note that the convective derivative of a tensor is defined in the same manner as for a function, namely ${D\over  D t}:={\extd\over \extd t}+ \nabla_X$.

\begin{proposition} Let a time dependent $X(t)$ obey the geodesic velocity equation (\ref{velclass}) and if $Z(t)$ is another time dependent vector field, let $\delta X:=\dot Z+\nabla_X Z-\nabla_Z X$. Then $X+h\delta X$ continues to obey (\ref{velclass}) to order $O(h^2)$ if and only if
\begin{align*}
\frac{D }{D t}\, \frac{D Z^i}{D t}  &  =  X^p Z^j X^k R^i{}_{kpj}.
\end{align*}
We refer to this as the geodesic deviation equation for time dependent vector fields. 
\end{proposition}
\begin{proof} Note that 
$[Z,X]  =\nabla_Z X-\nabla_X Z$
 is the Lie bracket of $X$ and $Z$ when $\nabla$ is torsion free, as in the case of a Levi-Civita connection. Then the variation of the geodesic velocity equation, i.e., requiring $\dot X+ h\dot{\delta X}+\nabla_{X+ h\delta X}(X+h\delta X)=0$ and dropping $h^2$, gives
\begin{align*}
0&=\ddot Z+\nabla_{\dot X} Z +\nabla_X \dot Z-\nabla_Z \dot X -\nabla_{\dot Z} X+\nabla_{(\dot Z+\nabla_X Z-\nabla_Z X)} X+\nabla_X (\dot Z+\nabla_X Z-\nabla_Z X) \\
&=\ddot Z+\nabla_{\dot X} Z +2\nabla_X \dot Z+\nabla_Z \nabla_X X +\nabla_{(\nabla_X Z-\nabla_Z X)} X+\nabla_X (\nabla_X Z-\nabla_Z X).
\end{align*}
Hence, the convected acceleration of $Z$ along $X$ can be computed as
\begin{align*}
\frac{D}{Dt}\frac{D}{Dt}Z=\frac{D}{Dt}(\dot Z+\nabla_X Z) &= \ddot Z+\nabla_{\dot X} Z+2\nabla_X \dot Z+\nabla_X \nabla_X Z  \\
&=  \nabla_X \nabla_Z X  -\nabla_Z \nabla_X X -\nabla_{(\nabla_X Z-\nabla_Z X)} X. 
\end{align*}
where the transition to the second equality is exactly our above equation for $\ddot Z$. We then interpret the result using
\[
([\nabla_Y,\nabla_X ]Z)^i = Y^p X^j Z^k R^i{}_{kpj}  + (\nabla_{ [Y,X]  } Z)^i. 
\]
\end{proof}
We include this result for completeness as usual geodesic deviation but using our new way of thinking classical geodesics. This also lays the groundwork for the quantum version to be addressed elsewhere. In fact, there are significant complications from the divergence of $X$ entering the quantum version of the geodesic velocity equation.

\section{Noncommutative states and divergence}\label{secstar}

This section contains new constructions at the noncommutative level, where we address the reality or $*$-involution aspects of quantum geodesic evolution and use this to develop aspects of the theory motivated by the preceding classical results about the convective derivative of the divergence. The main results are a compatibility condition for the state with respect to the quantum Riemannian geometry and a proposal for a `Ricci quadratic form'. 

\subsection{The matching of geometric and state divergences}
Here, we study the divergence of a left quantum vector field in 
$\cX ={}_A\Hom(\Omega^1,A)$. We start
with the divergence defined by an arbitrary left connection $\hat\nabla: \cX \to  \Omega^1  \tens_A  \cX $, but in the next subsection we will fix this as the left version of $\nabla_\cX$ used to define quantum geodesics. We also consider divergence defined by a functional $\phi :A\to\C$ and relate the two. 

\begin{definition} The divergence with respect to a left connection $\hat\nabla$ is defined as 
 $\mathrm{div}_{\hat\nabla} :=\ev\circ \hat\nabla: \cX \to A$.
\end{definition}
This is easily seen to obey $\mathrm{div}_{\hat\nabla}(a.X)=a.(\mathrm{div}_{\hat\nabla} X)+ X(\extd a)$ for all $a\in A, X\in\cX $. 
\begin{definition} \label{divsta}
We say that $X\in \cX $ has divergence $\mathrm{div}_\phi X\in A$  with respect to a linear functional $\phi$ if 
\[
\phi\big( a (\mathrm{div}_\phi X)    \big) +\phi(X(\extd a))= 0 
\]
for all $a\in A$.  We say that $\phi$ is {\em nondegenerate} if it 
 has the property that $\phi(ac)=0$ for all $c\in A$ implies that $a=0$.
\end{definition}

 It is easy to see that if $\phi$ is nondegenerate and $\mathrm{div}_\phi X$ exists, it is unique. Next, recall from Proposition~\ref{divrot} that classically, in the case of a Riemannian manifold, the divergences defined by the Levi-Civita connection and the standard integral are the same. We now give a sufficient condition to ensure this more generally.

\begin{lemma}
If we have a left connection
$\hat\nabla$ on $\cX $ such that
$\mathrm{div}_{\hat\nabla}$ obeys the equation in Definition~\ref{divsta}
 on a collection of left generators of $\cX $, then we can set $\mathrm{div}_\phi =\mathrm{div}_{\hat\nabla}$ on all of $\cX $. 
 \end{lemma}
 \begin{proof}  Suppose that ${\rm div}_\phi(X)$ exists for a given vector field $X$. For all $c\in A$, we have
\begin{align*}
\phi((a.X)(\extd c)) &= \phi(X(\extd c.a))=\phi(X(\extd(ac)))- \phi(X(c\extd a)) \cr
&= -\phi\big( ca (\mathrm{div}_\phi X )  +cX(\extd a)  \big)= -\phi\big( c(a \mathrm{div}_\phi X   +X(\extd a))  \big) 
\end{align*}
so ${\rm div}_\phi(a.X)$ also exists, namely ${\rm div}_\phi(a.X):=a\,\mathrm{div}_\phi X   +X(\extd a)$. The statement then follows.   
\end{proof} 

This can be stated in an alternative concise manner as the following.

\begin{proposition} \label{propdiveq}
We can set $\mathrm{div}_\phi :=\mathrm{div}_{\hat\nabla}$ on all of $\cX $ if and only if $\phi\circ{\rm div}_{\hat\nabla}=0$. 
 \end{proposition}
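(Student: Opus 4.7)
The proposition is an equivalence, so I would prove the two directions separately, and both are short given the machinery already in place.

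For the forward direction ($\Rightarrow$), assuming $\mathrm{div}_\phi = \mathrm{div}_{\hat\nabla}$ on all of $\cX^L$, I would simply specialise the defining equation of $\mathrm{div}_\phi$ from Definition~\ref{divsta} to $a = 1$. Since $\extd 1 = 0$, the term $\phi(X(\extd 1))$ vanishes and we are left with $\phi(\mathrm{div}_{\hat\nabla} X) = 0$ for every $X \in \cX^L$. This gives $\phi \circ \mathrm{div}_{\hat\nabla} = 0$.

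For the converse ($\Leftarrow$), the key input is the Leibniz-type identity
\[ \mathrm{div}_{\hat\nabla}(a.X) = a\cdot (\mathrm{div}_{\hat\nabla} X) + X(\extd a) \]
which holds on all of $\cX^L$ for any left connection $\hat\nabla$ (this was noted immediately after the definition of $\mathrm{div}_{\hat\nabla}$). The plan is to apply $\phi$ to both sides of this identity evaluated on $a.X$: the left-hand side becomes $\phi(\mathrm{div}_{\hat\nabla}(a.X))$, which is zero by hypothesis, while the right-hand side is exactly $\phi(a \cdot \mathrm{div}_{\hat\nabla} X) + \phi(X(\extd a))$. This is precisely the condition from Definition~\ref{divsta} that identifies $\mathrm{div}_{\hat\nabla} X$ as a valid $\mathrm{div}_\phi X$, and since the identity holds for all $a \in A$ and all $X \in \cX^L$, we conclude $\mathrm{div}_\phi = \mathrm{div}_{\hat\nabla}$ on $\cX^L$ (uniqueness, when $\phi$ is nondegenerate, then confirms this is the divergence with respect to $\phi$).

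There is essentially no obstacle here — the content is bookkeeping and the observation that the hypothesis $\phi \circ \mathrm{div}_{\hat\nabla} = 0$ must be invoked on $a.X$ rather than on $X$ itself so as to pick up the cross term $\phi(X(\extd a))$ via the Leibniz rule. The only subtlety worth flagging is that nondegeneracy of $\phi$ plays no role in the equivalence itself; it would only be needed if one wanted to interpret the conclusion as uniquely characterising $\mathrm{div}_\phi X$ rather than merely exhibiting $\mathrm{div}_{\hat\nabla} X$ as one admissible choice.
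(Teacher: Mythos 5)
Your proposal is correct and follows essentially the same route as the paper: the converse direction is exactly the paper's computation, applying $\phi$ to the Leibniz identity $\mathrm{div}_{\hat\nabla}(aX)=a\,\mathrm{div}_{\hat\nabla}(X)+X(\extd a)$ and using the hypothesis on the element $aX$ to recover the defining condition of Definition~\ref{divsta}, while the forward direction (which the paper leaves implicit) is the trivial specialisation to $a=1$. Your remark that nondegeneracy of $\phi$ is not needed for the equivalence itself is also consistent with how the paper uses the result.
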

 \begin{proof}
  For all $a\in A$ and $X\in \cX $ we have
 \[
 \phi\circ{\rm div}_{\hat\nabla}(a.X)=\phi(\ev\circ \hat\nabla(a.X))=\phi(X(\extd a))+\phi(a\, \mathrm{div}_{\hat\nabla}(X)), 
 \]
 so if this is always zero then ${\rm div}_{\hat\nabla}(X)$ provides a valid ${\rm div}_\phi(X)$. \end{proof}

\subsection{Twisted traces and $*$-involution on vector fields}

For this subsection we suppose that $A$ is a $*$-algebra with $*$-calculus, and that $\phi:A\to \C$ is hermitian (i.e.\ $\phi(a^*)=\phi(a)^*$). We can now define a real vector field, but only relative to $\phi$.

\begin{definition} \label{realX}
We define $X\in \cX $ to be {\em real with respect to $\phi$} if 
 $\phi(X(\omega^*))=\phi(X(\omega)^*)$ for all $\omega\in\Omega^1$. \end{definition}
If $X$ real then we have
\begin{align*}
\phi(X(\extd a)) +\phi\big(  (\mathrm{div}_\phi X)^* a    \big)&= \phi(X(\extd a^{**}))+\phi\big(  (\mathrm{div}_\phi X)^* a    \big)=\phi(X(\extd a^{*})^*)+\phi\big(  (\mathrm{div}_\phi X)^* a    \big)\\
&= \phi(X(\extd a^{*}))^* +\phi\big(  (\mathrm{div}_\phi X)^* a    \big)= - \phi( a^* \mathrm{div}_\phi X  )^* +\phi\big(  (\mathrm{div}_\phi X)^* a    \big)   =0
\end{align*}
for all $a\in A$. Note also that the reality condition is just the second part of (\ref{unitarity}), and in this case we see that the first part of (\ref{unitarity}) can be satisfied by putting $\kappa_t=\tfrac12 \mathrm{div}_\phi(X_t)$. To get further, we need to make an assumption on $\phi$.

\begin{definition} \label{divr8}
We say that $\phi$ is a twisted trace if there is an algebra automorphism $\varsigma$ with
 $\phi(a  b)=\phi(\varsigma(b) a)$. 
 \end{definition}
It is easy to see that then $\phi\circ\varsigma=\phi$, and if 
 $\phi$ is furthermore nondegenerate  then $\varsigma^{-1}(a)=\varsigma(a^*)^*$.  In the following theorem, we have the assumption that $\mathrm{div}_\phi =\mathrm{div}_{\nabla}$ on all of $\cX $, and we then just use $\mathrm{div}$ for both. We will only use the notation $\mathrm{div}$ in the case when both divergences agree.

\begin{theorem} \label{thmXstar}
Suppose that  $\phi$ is a nondegenerate hermitian twisted trace with twisting map $\varsigma$, and that
$\varsigma$ extends to a map $\varsigma:\Omega^1\to\Omega^1$ by $\varsigma(a.\extd b)=\varsigma(a).\extd \varsigma(b)$. We also assume that  $\hat\nabla,\hat\sigma$ is a left bimodule connection on $\cX $ with $\mathrm{div}_\phi =\mathrm{div}_{\hat\nabla}$ on all of $\cX $ and 
given $X\in \cX $, we define $X^* \in \cX $  for $\omega\in\Omega^1$ by
\[
 X^*(\omega)=\big(\ev\,\hat\sigma(X\tens\omega^*)\big)^*\ .
\]
Then
\newline
\noindent 1)\quad $(a\,X)^* =X^*a^*$ and  $(Xa)^* =a^*\, X^*$,
\newline
\noindent 2)\quad $\phi(X^* (\omega^*))=\phi(X(\varsigma(\omega)))^*$,
\newline
\noindent 3)\quad 
 $\mathrm{div}(X^* )  =  \mathrm{div}(X)^*$,
 \newline
\noindent 4)\quad $X^{**}=X$,
\newline
\noindent 5)\quad $X\in \cX $ is real if and only if $X^*=\varsigma\circ X\circ\varsigma^{-1} $,
\newline
\noindent 6)\quad $\mathrm{div}(\varsigma\circ X\circ\varsigma^{-1})=\varsigma( \mathrm{div}(X))$,
\newline
\noindent 7)\quad if $X\in \cX $ is real then $\mathrm{div}(X)^*  = \varsigma( \mathrm{div}(X))$.
\end{theorem}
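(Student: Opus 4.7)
\emph{Strategy.} My plan is to prove Parts~1--7 in order, noting that Part~7 falls out immediately once Parts~3, 5 and~6 are in hand. Before starting I would record a simple extension to $\Omega^1$ of the identity $\varsigma(a)^*=\varsigma^{-1}(a^*)$ on $A$ derived in Definition~\ref{divr8}: computing on a generator $a\,\extd b$ and using that $*$ commutes with $\extd$ one finds $\varsigma(a\,\extd b)^*=\extd\varsigma(b)^*\cdot\varsigma(a)^*=\varsigma^{-1}((a\,\extd b)^*)$, so the analog $\varsigma(\omega)^*=\varsigma^{-1}(\omega^*)$ holds on all of $\Omega^1$.

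\emph{Parts 1 and 2.} Part~1 comes from feeding $aX$ and $Xa$ into the definition of $X^*$ and using the bimodule properties of $\hat\sigma$ and $\ev$ together with the graded-involution relation $a\,\xi^*=(\xi\,a^*)^*$. For Part~2 the key step is to first establish $\phi(\ev\,\hat\sigma(X\tens\xi))=\phi(X(\varsigma(\xi)))$ and then conclude the stated formula by hermiticity of $\phi$. On a generator $\xi=b\,\extd a$, I would rewrite $\hat\sigma(X\tens b\,\extd a)=\hat\sigma(Xb\tens\extd a)$ via the middle-$A$ bimodule property, and then apply $\ev$ to $\hat\nabla((Xb)a)=\hat\sigma(Xb\tens\extd a)+\hat\nabla(Xb)\cdot a$ to get $\ev\,\hat\sigma(Xb\tens\extd a)=\mathrm{div}(Xba)-\mathrm{div}(Xb)\,a$. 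Applying $\phi$, the first term vanishes because $\phi\circ\mathrm{div}=0$ (Definition~\ref{divsta} with $a=1$), and twisted trace plus a second application of Definition~\ref{divsta} together with $\varsigma(\extd a)=\extd\varsigma(a)$ and left $A$-linearity of $X$ convert what remains into $\phi(X(\varsigma(b\,\extd a)))$.

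\emph{Parts 3, 5 and 6.} For Part~3 I would verify that the candidate $\mathrm{div}(X)^*$ satisfies the defining equation for $\mathrm{div}_\phi(X^*)$: Part~2 (taking $\xi=\extd a^*$) and hermiticity convert $\phi(X^*(\extd a))$ into $-\phi(\varsigma(a^*)\,\mathrm{div}(X))^*$, which matches $-\phi(a\,\mathrm{div}(X)^*)$ by a second use of hermiticity plus twisted trace; uniqueness of $\mathrm{div}_\phi$ finishes. For Part~5 I would substitute reality of $X$ into Part~2, use $\varsigma(\xi)^*=\varsigma^{-1}(\xi^*)$ on $\Omega^1$ and $\phi\circ\varsigma=\phi$, and obtain $\phi(X^*(\eta))=\phi(\varsigma X\varsigma^{-1}(\eta))$ for all $\eta$; setting $Y=X^*-\varsigma X\varsigma^{-1}$ (which is left $A$-linear by a short check), the substitution $\eta\mapsto a\eta$ together with twisted trace gives $\phi(\varsigma(Y(\eta))\,a)=0$ for all $a$, and nondegeneracy of $\phi$ forces $Y=0$. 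The converse direction reverses this chain. Part~6 is the quickest: $\phi((\varsigma X\varsigma^{-1})(\extd a))$ rewrites via $\phi\circ\varsigma=\phi$ and Definition~\ref{divsta} as $-\phi(\varsigma^{-1}(a)\,\mathrm{div}(X))$, and twisted trace then shows that $\varsigma(\mathrm{div}(X))$ satisfies the defining equation for $\mathrm{div}_\phi(\varsigma X\varsigma^{-1})$.

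\emph{Part 4 and Part 7.} The main obstacle is Part~4: iterating Part~2 only yields $\phi(X^{**}(\eta))=\phi(X(\eta))$ for all $\eta\in\Omega^1$, which is strictly weaker than $X^{**}=X$ as elements of $\cX^L$. I would upgrade it by applying the same identity with $X$ replaced by the right action $Xc$; Part~1 iterated gives $(Xc)^{**}=X^{**}c$, so $\phi(X^{**}(\eta)\,c)=\phi(X(\eta)\,c)$ for all $c\in A$, and nondegeneracy of $\phi$ promotes this to $X^{**}(\eta)=X(\eta)$. Finally Part~7 is immediate: for real $X$, Part~5 gives $X^*=\varsigma X\varsigma^{-1}$, and Parts~3 and~6 combine as $\mathrm{div}(X)^*=\mathrm{div}(X^*)=\mathrm{div}(\varsigma X\varsigma^{-1})=\varsigma(\mathrm{div}(X))$.
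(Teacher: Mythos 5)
Your proposal is correct, and for parts 1, 2, 3, 5, 6 and 7 it follows essentially the same route as the paper: part 2 via the identity $\ev\,\hat\sigma(X\tens\extd a)=\mathrm{div}(Xa)-\mathrm{div}(X)a$ together with the twisted trace, part 3 by verifying the defining equation of $\mathrm{div}_\phi$ for the candidate $\mathrm{div}(X)^*$ and invoking nondegeneracy, and parts 5--7 by the same substitutions. Two organisational differences are worth noting. First, your intermediate identity $\phi\big(\ev\,\hat\sigma(X\tens\xi)\big)=\phi\big(X(\varsigma\xi)\big)$ is exactly the content of the paper's subsequent proposition on $\phi\,\mathrm{rev}$; front-loading it makes part 2 and the later reality arguments slightly cleaner, at the cost of having to check it is well defined on generators (which you do). Second, and this is the only genuinely different step, your part 4 replaces the paper's direct computation
\[
X^{**}(\extd a^*)=\big(\mathrm{div}(X^*a)-\mathrm{div}(X^*)a\big)^*=\mathrm{div}(a^*X)-a^*\mathrm{div}(X)=X(\extd a^*)
\]
(which uses the Leibniz property of the divergence and then extends by left $A$-linearity) with a duality bootstrap: iterate part 2 to get $\phi(X^{**}(\eta))=\phi(X(\eta))$, then apply the same to $Xc$ and use $(Xc)^{**}=X^{**}c$ from part 1 together with nondegeneracy of $\phi$ to upgrade the weak equality to $X^{**}=X$. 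Both arguments are valid; the paper's is more self-contained at the level of the connection, while yours leans harder on nondegeneracy of $\phi$ and needs the extension $\varsigma(\omega)^*=\varsigma^{-1}(\omega^*)$ to $\Omega^1$, which you correctly verify on generators $a\,\extd b$.
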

\begin{proof} 
A  brief check shows that $ X^*(a\,\omega)= a\,X^*(\omega)$ for all $a\in A$ so $X^*\in\cX $. Next
\begin{align*}
\ev(\omega\tens(aX)^*)&=\big(\ev\,\hat\sigma(aX\tens\omega^*)\big)^*=\big(\ev\,\hat\sigma(X\tens\omega^*)\big)^*a^*=
\ev(\omega\tens X^*a^*)\ ,\\
\ev(\omega\tens(Xa)^*)&=\big(\ev\,\hat\sigma(Xa\tens\omega^*)\big)^*=
\big(\ev\,\hat\sigma(X\tens(\omega a^*)^*)\big)^*=\ev(\omega a^*\tens X^*)=\ev(\omega \tens a^*\,X^*), 
\end{align*}
which checks 1).  For 2), we set $\omega=\extd a.b$ and
\begin{align*}
\phi\big(  X^* ((\extd a.b)^*)   \big) &= \phi\big(  X^*(b^*\extd a^*)   \big) = \phi\big( b^* X^* (\extd a^*)   \big) =
 \phi\big( b^* \big(\ev\,\hat\sigma(X\tens \extd a)\big)^* \big) 
 =  \phi\big( \ev\,\hat\sigma(X\tens \extd a) \, b\big)^* \ .
\end{align*}
By definition of a bimodule connection, we have 
\begin{equation}\label{eveq}
\ev\,\hat\sigma(X\tens \extd a) =\mathrm{div}(X\,a)  -   \mathrm{div}(X)\, a\end{equation}
and using this 
\begin{align*}
\phi\big( \ev\,\hat\sigma(X\tens \extd a) \, b\big)^*  &=  \phi\big( \varsigma(b)\, \ev\,\hat\sigma(X\tens \extd a) \big)^*  =  \phi\big( \varsigma(b)\, \mathrm{div}(X\,a) \big)^* -
 \phi\big( \varsigma(b)\, \mathrm{div}(X)\, a\big)^*
\\
&=   \phi\big( \varsigma(b)\, \mathrm{div}(X\,a) \big)^* -
 \phi\big( \varsigma(ab)\, \mathrm{div}(X)\big)^* \\
        &= -\phi\big( \ev(\extd \varsigma(b) \tens X\,a) \big)^*  + \phi\big( \ev(\extd \varsigma(ab )\tens X)\big)^* \\
        &= \phi\big( \ev( \varsigma( \extd(ab )- a\,\extd b)\tens X)\big)^* = \phi\big( \ev( \varsigma( \extd a.b )\tens X)\big)^* 
\end{align*}
as required. 
For part 3),
using the fact that $\phi$ of a divergence is zero and equation (\ref{eveq}),
\begin{align*}
\phi\big( X^*(\extd a)\big) &= \phi\big(  (\ev\,\hat\sigma(X\tens\extd a^*))^*\big) 
=   \phi\big(   \mathrm{div}(X\,a^*)      \big)^*   
   -     \phi\big(     \mathrm{div}(X)\,a^*    \big)^* =    -     \phi\big(     \mathrm{div}(X)\,a^*   \big)^* \\
   &=   - \phi\big(  a\,  \mathrm{div}(X)^*   \big) \ .
\end{align*}
For part 4), we have, using part 3) and equation (\ref{eveq}),
\begin{align*}
X^{ **}(\extd a^*) &= (\ev\,\hat\sigma(X^*\tens \extd a))^*
=\big(   \mathrm{div}(X^* \,a)  -   \mathrm{div}(X^* )\,a \big)^*
=  \mathrm{div}((a^*\,X)^* )^*  -  a^*\, \mathrm{div}(X^* )^* \\
&= \mathrm{div}(a^*\,X )  -  a^*\, \mathrm{div}(X ) = X(\extd a^*)
\end{align*}
and, as both $X$ and $X^{** }$ are left module maps, this means that they agree on all of $\Omega^1$. 
For part 5), first note for $X\in \cX $ we also have $\varsigma\circ X\circ\varsigma^{-1}\in\cX $. Now if
$X^*=\varsigma\circ X\circ\varsigma^{-1} $ we have
\[
\phi(X^* (\omega^*))=\phi(\varsigma\circ X\circ\varsigma^{-1} (\omega^*))=
\phi(X\circ\varsigma^{-1} (\omega^*))=\phi(X((\varsigma\omega)^*))
\]
and then part 2) shows that $X$ is real. 
Now we suppose that $X$ is real, so for all $\omega\in\Omega^1$ and $a\in A$ we have, using parts 2) and 4),
\begin{align*} 
\phi(a\,X(\omega))&=
\phi(X(a\,\omega))=\phi(X(\omega^*a^*))^*=\phi((a^*X)(\omega^*))^* =
\phi((a^*X)^*(    \varsigma \omega   )) \\
&= \phi((X^*a)(    \varsigma \omega   )) = \phi(X^* ( \varsigma \omega   ) a)= \phi(\varsigma(a)\, X^* ( \varsigma \omega   ) )
=  \phi(a\, (\varsigma^{-1}\circ X^*) ( \varsigma \omega   ) )
\end{align*}
and then nondegeneracy of $\phi$ shows that $X^*=\varsigma\circ X\circ\varsigma^{-1} $. Now for part 6),
\[
\phi(\varsigma\circ X\circ\varsigma^{-1}(\extd a))=\phi(\varsigma(X(\extd \varsigma^{-1} a)))=\phi(X(\extd \varsigma^{-1} a))=\phi( \varsigma^{-1}( a)\, \mathrm{div}(X))=
\phi( a\, \varsigma(\mathrm{div}(X)))\ .
\]
Finally, part 7) is a combination  of parts 3), 5) and 6).
\end{proof}

\begin{definition} In the context of Theorem~\ref{thmXstar}, we 
define $\phi\,\mathrm{rev}: \cX \tens_A \Omega^1\to\C$ by
\[
\phi\,\mathrm{rev}(X\tens\omega)=\phi  \, \ev(\varsigma\omega\tens X)\ .
\]
\end{definition}

Note that $\phi\,\mathrm{rev}$ is well defined because
\[
\phi\,\mathrm{rev}(Xa\tens\omega)=\phi  \big( \ev(\varsigma\omega\tens X)\,a\big) =
\phi  \big( \varsigma(a)\,\ev(\varsigma\omega\tens X) \big) = 
\phi  \big(\ev(\varsigma (a\omega)\tens X) \big)= \phi\,\mathrm{rev}(X\tens a\omega)\ ,
\]
even though the hypothetical map
$\mathrm{rev}:\cX \tens_A \Omega^1\to A$ is not well defined.
Also we have
\[
\phi\,\mathrm{rev}(X\tens\omega\,a)=\phi  \, \ev(\varsigma(\omega\,a)\tens X)
=\phi  \, \ev(\varsigma(\omega)\tens  \varsigma(a)X) = \phi\,\mathrm{rev}(\varsigma(a)X\tens\omega)\ .
\]
Thus $\phi\,\mathrm{rev}: \cX \tens_A \Omega^1\to\C$  is actually defined on the twisted cyclic tensor product $(\cX \tens_A \Omega^1)/\sim$ where $X\tens\omega\,a \, \sim\, \varsigma(a)X\tens\omega$.

\begin{proposition} In the context of Theorem~\ref{thmXstar},  for  all $X\in\cX $ and $\omega\in \Omega^1$ ,we have
\[
\phi\,\mathrm{rev}(X\tens\omega) =\phi  \, \ev(\varsigma\omega\tens X)=\phi \,\ev\,\hat\sigma(X\tens\omega),\quad 
\phi(X(\omega)^*)=\phi(X^*(\varsigma(\omega^*)))\ .
\]
\end{proposition}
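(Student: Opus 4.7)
The first equality $\phi\,\mathrm{rev}(X\tens\xi)=\phi\,\ev(\varsigma\xi\tens X)$ is immediate from the definition of $\phi\,\mathrm{rev}$ given just above, so the real content is the identity $\phi\,\ev(\varsigma\xi\tens X)=\phi\,\ev\,\hat\sigma(X\tens\xi)$. The plan is to reduce to $\xi=b\,\extd a$ by $\C$-linearity in $\xi$ (such elements span $\Omega^1$ over $\C$), and then evaluate each side explicitly. On the left, $\phi\,\ev(\varsigma(b\,\extd a)\tens X)=\phi(X(\varsigma(b)\,\extd\varsigma(a)))$; pulling $\varsigma(b)$ out by left $A$-linearity of $X$, applying the twisted trace to move it across to the right, rewriting $X(\omega)\,b=(Xb)(\omega)$ for the right $A$-action on $\cX^L$, invoking the defining property of $\mathrm{div}_\phi$, and applying the twisted trace once more, this collapses to $-\phi(\mathrm{div}(Xb)\,a)$. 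On the right, I would use the balancing $X\tens_A b\,\extd a=Xb\tens_A\extd a$ together with the identity $\ev\,\hat\sigma(Y\tens\extd a)=\mathrm{div}(Ya)-\mathrm{div}(Y)\,a$ already derived inside the proof of Theorem~\ref{thmXstar}; after applying $\phi$, the vanishing $\phi\circ\mathrm{div}=0$ from Proposition~\ref{propdiveq} again gives $-\phi(\mathrm{div}(Xb)\,a)$, so the two sides agree.

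For the second displayed equation $\phi(X(\xi)^*)=\phi(X^*(\varsigma(\xi^*)))$, I would reduce it directly to the first. The key subsidiary fact is that $\varsigma$ extended to $\Omega^1$ continues to satisfy $\varsigma(\xi)^*=\varsigma^{-1}(\xi^*)$, which one checks by expanding both sides on $\xi=a\,\extd b$ and using the corresponding relation $\varsigma^{-1}(a)=\varsigma(a^*)^*$ on $A$ from Definition~\ref{divr8}. Then the definition $X^*(\eta)=(\ev\,\hat\sigma(X\tens\eta^*))^*$ applied with $\eta=\varsigma(\xi^*)$ gives $X^*(\varsigma(\xi^*))=(\ev\,\hat\sigma(X\tens\varsigma^{-1}(\xi)))^*$, and applying $\phi$ together with hermicity and the first displayed equation (with $\varsigma^{-1}(\xi)$ in place of $\xi$) reduces the right-hand side to $\phi(X(\xi))^*=\phi(X(\xi)^*)$.

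The main potential obstacle is bookkeeping rather than any genuine difficulty: one must keep straight which $A$-action on $\cX^L$ is meant at each step (namely the left action $(aX)(\omega)=X(\omega a)$ versus the right action $(Xa)(\omega)=X(\omega)\,a$), the resulting balancing convention in $\cX^L\tens_A\Omega^1$, and the direction in which the twisted trace is applied. Once these conventions are tracked carefully, both equations reduce to short manipulations combining the defining property of $\mathrm{div}_\phi$, the vanishing of $\phi\circ\mathrm{div}$, and the bimodule-connection identity for $\ev\,\hat\sigma$ used in Theorem~\ref{thmXstar}.
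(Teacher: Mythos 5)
Your proof is correct, but it takes a genuinely different route from the paper's on both displays. For the first identity, the paper also reduces to exact forms (via well-definedness of both functionals on the twisted cyclic tensor product, which amounts to the same $\tens_A$-balancing $X\tens b\,\extd a=Xb\tens\extd a$ you use), but its computation then runs through the $*$-structure: hermicity of $\phi$, part (3) of Theorem~\ref{thmXstar} ($\mathrm{div}(X^*)=\mathrm{div}(X)^*$), and the definition of $X^*$, converting $\phi(\mathrm{div}(X)\,a)$ into $\phi(\ev(\extd a^*\tens X^*))^*$ and back. Your computation never touches $*$ at all: both sides collapse to $-\phi(\mathrm{div}(Xb)\,a)$ using only the twisted trace, the defining property of $\mathrm{div}_\phi$, the identity $\ev\,\hat\sigma(Y\tens\extd a)=\mathrm{div}(Ya)-\mathrm{div}(Y)a$, and $\phi\circ\mathrm{div}=0$. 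This is more elementary and makes transparent that the first identity is independent of the involution (it would hold for any nondegenerate twisted trace with a compatible divergence), whereas the paper's argument leans on the $*$-machinery it has just built. For the second identity, the paper simply writes $X(\xi)^*=\ev\,\hat\sigma(X^*\tens\xi^*)$ using $X^{**}=X$ and then applies the first identity to $X^*\tens\xi^*$; you instead establish the auxiliary relation $\varsigma(\xi)^*=\varsigma^{-1}(\xi^*)$ on $\Omega^1$ (correctly derived from $\varsigma^{-1}(a)=\varsigma(a^*)^*$ in Definition~\ref{divr8}) and apply the first identity at $\varsigma^{-1}(\xi)$. Both are valid; the paper's is shorter, while yours isolates a reusable compatibility of $\varsigma$ with $*$ on one-forms. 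One cosmetic remark: the intermediate equalities in the paper's chain carry sign slips (each application of the divergence definition should introduce a minus sign, and the two cancel); your version keeps the signs explicit throughout.
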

\begin{proof}  First note that $\phi \,\ev\,\hat\sigma:  \cX \tens_A \Omega^1\to\C$ 
is also well defined on the twisted cyclic tensor product. It is then enough to show the equality above for $\omega=\extd a$ with $a\in A$. Then
\begin{align*}
\phi  \, \ev(\varsigma\extd a\tens X) &= \phi\big(  \varsigma(a)\mathrm{div}(X)  \big)
= \phi\big(  \mathrm{div}(X) a \big)= \big(\phi\big(  a^*\mathrm{div}(X)^* \big)\big)^*
= \big(\phi\big(  a^*\mathrm{div}(X^*) \big)\big)^* \\
&= \big(\phi\big(  \ev(\extd a^*\tens X^*) \big)\big)^* =\phi \,\ev\,\hat\sigma(X\tens\extd a)\ .
\end{align*}
Next we have, using $X^{**}=X$ and the definition of $X^*$ that
\[
\phi(X(\omega)^*)=\phi(\ev\, \hat\sigma(X^*\tens\omega^*))
\]
and then the previous result gives the second part of the statement.
\end{proof} 

We will also need the following lemma. 

\begin{lemma}  \label{vbhu}
We assume the conditions to Theorem~\ref{thmXstar} and $X$ real as in Definition~\ref{realX}. Then
\newline
\noindent 1)\quad $\phi\big(  X(\id\tens X)(\eta\tens \lambda) \big){}^* = 
\phi\big( X(\id\tens X)(\lambda^*\tens \eta^*) \big)$,
\newline
\noindent 2)\quad $\phi\big(  (\mathrm{div}(X).X)(\omega)  \big){}^* = \phi\big(  (X.\mathrm{div}(X))(\omega^*)  \big) $,
\newline
\noindent 3)\quad $\phi\big(X(\extd\, X(\omega))\big)=  - \phi\big(  (X.\mathrm{div}(X))(\omega)  \big) $
for all $\omega,\eta, \lambda\in\Omega^1$.
\end{lemma}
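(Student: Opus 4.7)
My plan is to handle the three identities in the order (3), (2), (1); this reflects increasing computational complexity, but none is conceptually deep, and part~(1) requires no tools beyond reality of $X$ and the $*$-algebra structure of $\Omega^{1}$.

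Part~(3) is immediate from Definition~\ref{divsta}: substituting $a=X(\xi)$ yields
\[
\phi(X(\xi)\,\mathrm{div}(X))+\phi(X(\extd X(\xi)))=0,
\]
and reading $X(\xi)\,\mathrm{div}(X)=(X.\mathrm{div}(X))(\xi)$ from the bimodule convention $(X.b)(\omega)=X(\omega)b$ on $\cX^{L}$ finishes it.

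For part~(2), I first unfold $(\mathrm{div}(X).X)(\xi)=X(\xi\,\mathrm{div}(X))$ using $(a.X)(\omega)=X(\omega a)$, and $(X.\mathrm{div}(X))(\xi^{*})=X(\xi^{*})\,\mathrm{div}(X)$ using $(X.b)(\omega)=X(\omega)b$. Then hermiticity of $\phi$, followed by the reality identity $\phi(X(\omega)^{*})=\phi(X(\omega^{*}))$ applied at $\omega=\xi\,\mathrm{div}(X)$, the order-reversal $(\xi\,\mathrm{div}(X))^{*}=\mathrm{div}(X)^{*}\xi^{*}$ on $\Omega^{1}$, and the left-module property of $X$ in succession reduce the LHS to $\phi(\mathrm{div}(X)^{*}\,X(\xi^{*}))$. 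Theorem~\ref{thmXstar}(7) now rewrites $\mathrm{div}(X)^{*}=\varsigma(\mathrm{div}(X))$, after which the twisted-trace rearrangement $\phi(\varsigma(b)a)=\phi(ab)$ (the twisted trace condition read right to left) delivers $\phi(X(\xi^{*})\,\mathrm{div}(X))$, which is the RHS.

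Part~(1) is the subtlest but still a careful rearrangement: I aim to show both sides collapse to the common expression $\phi(X(\lambda)^{*}X(\eta^{*}))$. For the LHS I move $*$ inside $\phi$ by hermiticity, apply reality $\phi(X(\omega)^{*})=\phi(X(\omega^{*}))$ at $\omega=\eta\,X(\lambda)$, expand $(\eta\,X(\lambda))^{*}=X(\lambda)^{*}\eta^{*}$, and extract the scalar $X(\lambda)^{*}$ via the left-module property of $X$. For the RHS I use reality in the equivalent form $\phi(X(\omega))=\phi(X(\omega^{*}))^{*}$ at $\omega=\lambda^{*}X(\eta^{*})$, expand $(\lambda^{*}X(\eta^{*}))^{*}=X(\eta^{*})^{*}\lambda$, use the left-module property again, and close with one further application of hermiticity of $\phi$. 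There is no genuine obstacle: the proof is essentially bookkeeping. The only thing to watch throughout is the bimodule convention $(a.X.b)(\omega)=X(\omega a)\,b$ on $\cX^{L}$ together with the order-reversal $(a\omega)^{*}=\omega^{*}a^{*}$ on $\Omega^{1}$; mixing these up would derail the chain of rewrites.
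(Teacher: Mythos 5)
Your proposal is correct and follows essentially the same route as the paper's proof: part (3) from the definition of $\mathrm{div}_\phi$ with $a=X(\xi)$, part (2) via hermiticity, reality, the left-module property, Theorem~\ref{thmXstar}(7) and the twisted trace, and part (1) by reducing both sides to $\phi(X(\lambda)^*X(\eta^*))$ using only reality and the module structure. The only cosmetic difference is that you organise (1) as a meet-in-the-middle argument where the paper writes a single chain.
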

\begin{proof} For 1), using the assumption that $X$ is real, we have
\begin{align*}
\phi\big( & X(\id\tens X)(\eta\tens \lambda) \big)^* = \phi\big( X(\eta\,X( \lambda)) \big)^* 
=  \phi\big( X(X( \lambda)^*\,\eta^*) \big) =  \phi\big(X( \lambda)^*\, X(\eta^*) \big)   \\
&=  \phi\big(\big( X(\eta^*)^*\,X( \lambda)\big)^* \big) = 
\phi\big( X\big( X(\eta^*)^*\,\lambda\big)^* \big) = 
\phi\big( X\big(\lambda^*\, X(\eta^*)\big) \big) = 
\phi\big( X(\id\tens X)(\lambda^*\tens \eta^*) \big)\ .
\end{align*}
For 2), we use
\begin{align*}
\phi\big( & (\mathrm{div}(X).X)(\omega)  \big){}^* = \phi\big(  X(\omega\, \mathrm{div}(X))  \big){}^*
=\phi\big(  X( \mathrm{div}(X)^*\,\omega^*)  \big) =\phi\big(  \mathrm{div}(X)^*\, X(\omega^*)  \big) \\
&=\phi\big(  \varsigma(\mathrm{div}(X))\, X(\omega^*)  \big) 
=\phi\big( X(\omega^*) \, \mathrm{div}(X) \big) 
= \phi\big(  (X.\mathrm{div}(X))(\omega^*)  \big) \ .
\end{align*}
Finally,  $\phi\big(X(\extd\, X(\omega))\big)=   -  \phi\big(   X(\omega)\,\mathrm{div}(X)    \big)
= - \phi\big(  (X.\mathrm{div}(X))(\omega)  \big)$ proves 3). \end{proof}

\subsection{$*$-Preserving connections and compatibility with quantum geodesic flow}

This subsection will justify the definition of the $*$-operation in Theorem~\ref{thmXstar} and the assumptions leading up to it. 

\begin{corollary} If $\int:A\to\C$ meets the conditions for $\phi$ in Theorem~\ref{thmXstar}, and   $X\in\cX $ is real with respect to $\int$, then both halves of  the unitarity conditions (\ref{unitarity}) hold with $\kappa={\rm div}(X)/2$. 
\end{corollary}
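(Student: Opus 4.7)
The plan is to verify the two parts of the unitarity condition~(\ref{unitarity}) separately, in order of difficulty. The second half, $\int X(\omega^*)-X(\omega)^*=0$ for all $\omega\in\Omega^1$, is essentially the defining condition for $X$ to be real with respect to $\int$ in Definition~\ref{realX} (combined with hermiticity of $\int$, so that $\int X(\omega)^*=\int X(\omega)$ complex-conjugated); since realness is a hypothesis, this half requires no work.

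For the first half, the strategy is to extract two complementary ``integration by parts'' identities from the hypotheses and average them. From the defining property of the divergence with respect to $\int$ (Definition~\ref{divsta}) we immediately have
\[
\int\bigl(a\cdot\mathrm{div}_\int(X)\bigr)+\int X(\extd a)=0,
\]
which places $\mathrm{div}_\int(X)$ to the right of $a$. On the other hand, the consequence of realness recorded at the end of Definition~\ref{realX} gives
\[
\int\bigl(\mathrm{div}_\int(X)^*\cdot a\bigr)+\int X(\extd a)=0,
\]
which places $\mathrm{div}_\int(X)^*$ to the left of $a$. These are genuinely different identities in the noncommutative setting, because $a\cdot\mathrm{div}_\int(X)$ need not equal either $\mathrm{div}_\int(X)\cdot a$ or $\mathrm{div}_\int(X)^*\cdot a$ under the integral.

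Setting $\kappa=\tfrac12\,\mathrm{div}_\int(X)$, so that $\kappa^*=\tfrac12\,\mathrm{div}_\int(X)^*$, and taking half the sum of the two identities collapses them into
\[
\int\bigl(\kappa^* a+a\kappa+X(\extd a)\bigr)=0,
\]
which is precisely the first half of~(\ref{unitarity}). I do not anticipate any serious obstacle; the only hypothesis of Theorem~\ref{thmXstar} actually being invoked is hermiticity of $\int$ (used inside Definition~\ref{realX} to derive the second identity above), while the nondegeneracy and twisted-trace structure play no direct role in this particular corollary. The content of the corollary is therefore just the observation that the symmetric splitting $\kappa=\tfrac12\,\mathrm{div}_\int(X)$ is the unique choice that simultaneously absorbs both integration-by-parts identities and thereby reproduces, in the noncommutative setting, the classical relation $\kappa+\kappa^*=\mathrm{div}_\nabla X$ from~(\ref{kappaeq}).
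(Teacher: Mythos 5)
Your proof is correct and follows essentially the same route as the paper, which simply notes that the second unitarity condition is the definition of $X$ real and (in the remark after Definition~\ref{realX}) that the first is satisfied by $\kappa=\tfrac12\,\mathrm{div}_\int(X)$ via exactly the two integration-by-parts identities you average. Your write-up just makes explicit the averaging step that the paper leaves implicit.
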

\begin{proof} 
The first half of (\ref{unitarity}) follows from the defnition of divergence and the following equation
\[ \int{\rm div}(X)^* a=\int\varsigma({\rm div}(X))a=\int a\, {\rm div}(X)\]
for all $a\in A$ and $X\in \cX $, where we have used Theorem~\ref{thmXstar} part 7).
The second half of (\ref{unitarity}) is just the definition of $X$ real.  \end{proof}

Next, if we have a $*$-operation on an $A$-bimodule then we have a natural `reality' or $*$-preserving condition on any bimodule connection on it. We only need here the case of linear connections where the bimodule is $\Omega^1$ or its dual $\cX $. In the former case, the standard condition\cite{BegMa} can be written as
\begin{equation}\label{starpr}
\nabla^L(\omega^*)=\dagger\nabla\omega,\quad \dagger:={\rm flip}\circ(*\tens *)
\end{equation}
where the left and right connections are related as in (\ref{nabnabL}). In this context $\sigma$ is necessarily invertible with $\sigma\,\dagger\, \sigma \,\dagger=\id$. Similarly on other bimodules with $*$-structure, including $\cX$. So far, $\hat\nabla$ is any left bimodule connection on $\cX$ but henceforth we assume it is the left version of the right connection $\nabla_\cX$, 
\begin{equation}\label{hatnabla}  \hat\nabla=\sigma_\cX^{-1}\nabla_{\cX},\quad \hat\sigma=\sigma_\cX^{-1}\end{equation}
where $\nabla_\cX$ is dual to a left connection $\nabla^L$, corresponding to $\nabla$ on $\Omega^1$ as at the end of Section~\ref{secqlc}. Now we can check that Theorem~\ref{thmXstar} is fit for purpose in singling out a suitable reality condition for geodesic vector fields.

\begin{corollary}\label{corXreal}  Suppose the conditions of Theorem~\ref{thmXstar} with $\hat\nabla$ obtained from a $*$-preserving bimodule connection $\nabla$ on $\Omega^1$ as in (\ref{hatnabla}). If $X(t)$ obeys the geodesic velocity equation (\ref{coveleq}) 
and the initial vector field $X(0)$ is real 
 then   $\dot X$ is real if and only if, for all $\omega\in\Omega^1$
 and all time $t$ (we suppress the explicit $t$ dependence of $X$ for clarity),
\[
\int X(\id\tens X)(\id-\sigma)\nabla^L(\omega)=0\ .
\]
Moreover, this is true if and only if
\begin{equation}\label{auxnew}
(\id\tens\ev)(\nabla_\cX\tens\id+\id\tens\hat\nabla)(\id-\sigma_{\cX\cX}{}^{-1})(X\tens X)=0,
\end{equation}
where $\sigma_{\cX\cX}$ is defined by 
\[
\ev(\id\tens\ev\tens\id)(\sigma_L(\omega\tens\eta)\tens Y\tens Z) = \ev(\id\tens\ev\tens\id)(\omega\tens\eta\tens \sigma_{\cX\cX}(Y\tens Z)) 
\]
for all $\omega,\eta\in\Omega^1$ and $Y,Z\in \cX$. 
\end{corollary}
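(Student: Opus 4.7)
My strategy is to use Theorem~\ref{thmXstar}(5) to convert reality of $\dot X$ into the integral identity $\int\dot X(\xi^*)=\int\dot X(\xi)^*$ for all $\xi$, substitute the geodesic velocity equation (\ref{veleq}) for $\dot X$, and reduce the difference $\int\dot X(\xi^*)-\int\dot X(\xi)^*$ to a condition on the connection term alone by showing that the other contributions cancel exactly.

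In detail, with $\kappa=\mathrm{div}(X)/2$ (forced by the first unitarity condition (\ref{unitarity}) together with reality), equation (\ref{veleq}) rearranges to
\[ \dot X(\xi) = -[X,\kappa](\xi) - X(\extd X(\xi)) + X(\id\tens X)\nabla_L(\xi). \]
Splitting $\int\dot X(\xi^*)-\int\dot X(\xi)^*=-A-B+C$ according to the three terms, I apply Lemma~\ref{vbhu}(2) in both its forms (the stated identity and the one obtained by swapping $\xi\leftrightarrow\xi^*$ and conjugating) to express $A$ entirely in terms of $\phi((\mathrm{div}(X).X)(\xi))^*$ and $\phi((X.\mathrm{div}(X))(\xi))^*$; using Lemma~\ref{vbhu}(3) similarly for $B$ gives that $A+B=0$. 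What remains is $C := \int X(\id\tens X)\nabla_L(\xi^*)-\int(X(\id\tens X)\nabla_L(\xi))^*$. Using the $*$-preserving property (\ref{starpr}) to write $\nabla_L(\xi^*)=\dagger\sigma_L^{-1}\nabla_L(\xi)$, where $\dagger$ is swap-and-star on $\Omega^1\tens_A\Omega^1$, and then Lemma~\ref{vbhu}(1) to undo the swap at the cost of a complex conjugate, $C$ collapses to $-\overline{\int X(\id\tens X)(\id-\sigma_L^{-1})\nabla_L(\xi)}$, which vanishes for all $\xi$ iff the first stated integral condition holds.

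For the second equivalence I would dualize using the defining relation of $\sigma_{\cX\cX}$. That relation directly translates $X(\id\tens X)\sigma_L^{-1}(\cdot)$ on $\Omega^1\tens_A\Omega^1$ into $\ev(\id\tens\ev\tens\id)(\cdot \tens \sigma_{\cX\cX}^{-1}(X\tens X))$, so
\[ \int X(\id\tens X)(\id-\sigma_L^{-1})\nabla_L(\xi) = \int\ev(\id\tens\ev\tens\id)\bigl(\nabla_L(\xi)\tens(\id-\sigma_{\cX\cX}^{-1})(X\tens X)\bigr). \]
Writing $(\id-\sigma_{\cX\cX}^{-1})(X\tens X)=\sum Y\tens Z$ in Sweedler form, the integrand equals $\sum Z(\alpha\,Y(\beta))$ with $\alpha\tens\beta=\nabla_L(\xi)$. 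The Leibniz duality $\extd(Y(\xi))=(\ev\tens\id)(\nabla_\cX Y)(\xi)+(\id\tens Y)\nabla_L(\xi)$ between $\nabla_\cX$ and $\nabla_L$, combined with the divergence identity $\int Z(\extd c)=-\int c\,\mathrm{div}(Z)$, moves the derivative off $\xi$ and onto $Y\tens Z$ via the tensor product connection $\nabla_\cX\tens\id+\id\tens\hat\nabla$, producing $-\int\bigl((\id\tens\ev)(\nabla_\cX\tens\id+\id\tens\hat\nabla)(\id-\sigma_{\cX\cX}^{-1})(X\tens X)\bigr)(\xi)$.

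To conclude, vanishing of this for all $\xi\in\Omega^1$ forces, by nondegeneracy of $\phi$ applied to $\xi=a\extd b$ together with left $A$-linearity of vector fields, the element of $\cX^L$ in brackets to vanish, which is (\ref{auxnew}). The main technical obstacle is the dualization step: one must track the left/right bimodule conventions in the definition of $\sigma_{\cX\cX}$ so that the tensor product connection on $\cX\tens_A\cX$ (mixing the right connection $\nabla_\cX$ on the first factor with the left connection $\hat\nabla$ on the second) is exactly the one dual to $\nabla_L$ on $\Omega^1\tens_A\Omega^1$, and verify that the two nested applications of $\ev$ pair correctly with the Sweedler components. Everything else, including the cancellation $A+B=0$, reduces to a finite sequence of applications of Lemma~\ref{vbhu}.
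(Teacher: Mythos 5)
Your proposal is correct and follows essentially the same route as the paper's proof: both substitute the geodesic velocity equation, use Lemma~\ref{vbhu}(1)--(3) together with the $*$-preserving condition (\ref{starpr}) to reduce $\int\dot X(\xi^*)-\int\dot X(\xi)^*$ to the $(\id-\sigma_L{}^{-1})$ term, and then dualize via the defining relation of $\sigma_{\cX\cX}$ and an integration by parts against $\mathrm{div}$, invoking nondegeneracy of $\phi$ to pass to the pointwise condition (\ref{auxnew}). The only differences are organizational (explicit term-by-term cancellation $A+B=0$ versus the paper's sequential rewriting, and applying nondegeneracy at the end rather than in the middle), not mathematical.
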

\begin{proof}
From the geodesic velocity equation  and then Lemma~\ref{vbhu} 3), if $X(t)$ is real then we have
at time $t$,
\begin{align*}
\int\dot X(\omega) &= \int\big(\tfrac12\big(\mathrm{div}(X)\,X-X\,\mathrm{div}(X)\big)(\omega)
- X(\extd\, X(\omega))+X(\id\tens X)\nabla\omega\big) \\
&= \int\big(\tfrac12\big(\mathrm{div}(X)\,X+X\,\mathrm{div}(X)\big)(\omega)
+X(\id\tens X)\nabla^L\omega\big).
\end{align*}
If we write $\nabla\omega=\eta\tens\lambda$ then Lemma~\ref{vbhu} 1) and 2) give
\begin{align*}
\int & \dot X(\omega){}^* 
= \int\big(\tfrac12\big(\mathrm{div}(X)\,X+X\,\mathrm{div}(X)\big)(\omega^*)
+X(\id\tens X)(\lambda^*\tens\eta^*)\big)\\
&= \int\big(\tfrac12\big(\mathrm{div}(X)\,X+X\,\mathrm{div}(X)\big)(\omega^*)
+X(\id\tens X)\sigma_L(\lambda^*\tens\eta^*)\big) -
\int X(\id\tens X)(\id-\sigma)\sigma_L(\lambda^*\tens\eta^*) \\
&= \int\big(\tfrac12\big(\mathrm{div}(X)\,X+X\,\mathrm{div}(X)\big)(\omega^*)
+X(\id\tens X)\nabla(\omega^*)\big) -
\int X(\id\tens X)(\id-\sigma) \nabla(\omega^*)  \\
&= \int\dot X(\omega^*) -
\int X(\id\tens X)(\id-\sigma) \nabla(\omega^*) 
\end{align*}
with the last integral needing to vanish for reality. We then swapped $\omega\leftrightarrow\omega^*$ for presentation of the result. Next, we set $Y\tens Z=(\id-\sigma_{\cX\cX}{}^{-1})(X\tens X)$
and the integral condition is equivalent to
\[
\int\ev(\id\tens\ev\tens\id)(\nabla^L(\omega)\tens Y\tens Z)=0 
\]
for all $\omega\in\Omega^1$. Now apply this to $a\omega$ for arbitrary $a\in A$ to get
\begin{align*}
0 &=\int a\, \ev(\id\tens\ev\tens\id)(\nabla^L(\omega)\tens Y\tens Z)+\int\ev(\id\tens\ev\tens\id)(\extd a\tens\omega\tens Y\tens Z) \\
&=\int a\, \ev(\id\tens\ev\tens\id)(\nabla^L(\omega)\tens Y\tens Z)+\int\ev(\extd a\tens Y(\omega) Z) \\
&=\int a\, \ev(\id\tens\ev\tens\id)(\nabla^L(\omega)\tens Y\tens Z)-\int a\,\mathrm{div}( Y(\omega) Z) \ .
\end{align*}
By nondegeneracy, we deduce that $\mathrm{div}( Y(\omega) Z) - \ev(\id\tens\ev\tens\id)(\nabla^L(\omega)\tens Y\tens Z) =0$, or
\begin{align*}
0 &= \ev(\extd(Y(\omega))\tens Z)+Y(\omega) \, \mathrm{div}(  Z) -  \ev(\id\tens\ev\tens\id)(\nabla^L(\omega)\tens Y\tens Z) \\
&=\ev\big(\omega\tens (\id\tens\ev)(\nabla_\cX\tens\id+\id\tens\hat\nabla)(Y\tens Z)\big),
\end{align*}
where we have used the dual connection $\nabla_\cX$ to $\nabla^L$. 
\end{proof}

Note that (\ref{auxnew}) is weaker than the original auxiliary braid condition $\sigma_{\cX\cX}(X\tens X)=X\tens X$ and provides a kind of {\em improved auxiliary condition}, as the previous one was unnecessarily restrictive. Corollary~\ref{corXreal} gives this as necessary and sufficient for reality of the velocity field in the sense of Theorem~\ref{thmXstar} at all times. In practice, we can assume that $X,\dot X$ are real in this sense and apply $*$ to both sides of the geodesic velocity equations. Comparing with the original velocity equation then gives the improved auxiliary condition as an additional restriction on the space of velocity fields, and we then solve the two together. 
This method will give all real solutions of the velocity equations. We are also free to further restrict our solutions by adopting a particular ansatz, in which case we can solve assuming the ansatz but if the differential equation on the ansatz is not consistent, then the time evolved solution will leave the region where the ansatz is valid.

\subsection{Quantum convected derivative of the divergence} \label{secR}
We start with a quantum analogue of the rate of change of a function along a path parameterised by time $t$, where the velocity of the path is given by the vector field $X$. 

\begin{definition} \label{convder7}
For a function $a:\R\to A$ and a time dependent left vector field $X$ we define
\[
\frac{Da}{Dt} = \frac{\partial a}{\partial t} + \tfrac12\, \ev(\extd a\tens X) + \tfrac12\, \ev \,\hat\sigma( X \tens\extd a) \ .
\]
\end{definition}

This is more symmetric than if we had applied $X$ only on one side, but there is a more concrete reason why we have to make this definition. Recall from Theorem~\ref{thmXstar} part 7) that the divergence of a real vector field obeys the twisted reality condition $a^*=\varsigma(a)$.

\begin{proposition}
Let $a$ be a time-dependent element of $A$ obeying the twisted reality condition $a^*=\varsigma(a)$ and $X$ a time dependent real left vector field. If 
$\ev \,\hat\sigma( \varsigma\circ X \circ \varsigma^{-1} \tens \varsigma\,\omega) =\varsigma\,\ev(X\tens\omega)$ 
 then $\frac{Da}{Dt}$ obeys the twisted reality condition.
\end{proposition}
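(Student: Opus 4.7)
The plan is to take the $*$-involution of the formula in Definition~\ref{convder7} term by term and match each result to $\varsigma$ applied to the corresponding term. Writing $T_1 := \ev(\extd a\tens X) = X(\extd a)$ and $T_2 := \ev\,\hat\sigma(X\tens\extd a)$, we have $Da/Dt = \partial_t a + \tfrac12 T_1 + \tfrac12 T_2$. For the time derivative, $\varsigma$ is time-independent and $*$ commutes with $\partial_t$, so $(\partial_t a)^* = \partial_t(a^*) = \partial_t(\varsigma(a)) = \varsigma(\partial_t a)$.

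For $T_2^*$, I would apply the defining formula $X^*(\xi) = (\ev\,\hat\sigma(X\tens\xi^*))^*$ of Theorem~\ref{thmXstar} with $\xi = \extd\varsigma(a)$. Since $a^* = \varsigma(a)$ gives $\varsigma(a)^* = a$, we have $(\extd\varsigma(a))^* = \extd a$, so this reads $T_2^* = X^*(\extd\varsigma(a))$. Using that $X$ is real (part~(5) of Theorem~\ref{thmXstar}), $X^* = \varsigma\circ X\circ\varsigma^{-1}$, and the extension of $\varsigma$ to forms commutes with $\extd$, this gives $T_2^* = \varsigma(X(\varsigma^{-1}\extd\varsigma(a))) = \varsigma(X(\extd a)) = \varsigma(T_1)$.

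For $T_1^*$, I would use the involutivity $X^{**}=X$ of part~(4) to rewrite $T_1 = X(\extd a) = (\ev\,\hat\sigma(X^*\tens(\extd a)^*))^*$, so $T_1^* = \ev\,\hat\sigma(X^*\tens\extd\varsigma(a))$. Substituting $X^* = \varsigma X\varsigma^{-1}$ and noting $\extd\varsigma(a) = \varsigma\,\extd a$, the standing hypothesis applied with $\xi=\extd a$ yields $T_1^* = \varsigma\,\ev(X\tens\extd a) = \varsigma(T_1)$. Assembling the pieces, $T_1^* = T_2^* = \varsigma(T_1)$, which in particular forces $T_1 = T_2$, so $\tfrac12(T_1^*+T_2^*) = \tfrac12(\varsigma(T_1)+\varsigma(T_2))$; adding the time derivative gives $(Da/Dt)^* = \varsigma(Da/Dt)$.

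There is no really hard step here; the work is bookkeeping, keeping track of (i)~the identity $\varsigma^{-1}(c) = \varsigma(c^*)^*$ from Definition~\ref{divr8}, (ii)~the commutation $\extd\varsigma = \varsigma\,\extd$ from the given extension of $\varsigma$ to $\Omega^1$, and (iii)~not confusing $\ev(\xi\tens X)=X(\xi)$ with the twisted version $\ev\,\hat\sigma(X\tens\xi)$. The conceptual point that emerges is why the symmetric average $\tfrac12(T_1+T_2)$ in Definition~\ref{convder7} is the natural choice: the $*$-operation swaps the two orderings via $\hat\sigma$, so only this symmetrised combination transports the twisted reality of $a$ into twisted reality of $Da/Dt$.
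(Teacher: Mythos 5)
Your computation of $T_2^*$, where $T_1=\ev(\extd a\tens X)=X(\extd a)$ and $T_2=\ev\,\hat\sigma(X\tens\extd a)$, is correct and is exactly the paper's second display: from $X^*(\xi)=(\ev\,\hat\sigma(X\tens\xi^*))^*$ with $\xi=\extd\varsigma(a)$ and reality $X^*=\varsigma\circ X\circ\varsigma^{-1}$ one gets $T_2^*=\varsigma(T_1)$, with no use of the standing hypothesis. The problem is in your $T_1^*$ step. Up to the line $T_1^*=\ev\,\hat\sigma(X^*\tens\extd\varsigma(a))=\ev\,\hat\sigma(\varsigma\circ X\circ\varsigma^{-1}\tens\varsigma\,\extd a)$ you are fine, but you then read the right-hand side of the hypothesis, $\varsigma\,\ev(X\tens\xi)$, as $\varsigma(X(\xi))$. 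That reading cannot be the intended one: evaluation on $\cX^L\tens_A\Omega^1$ in that order is not well defined over $A$ (the paper makes this point explicitly when introducing $\phi\,\mathrm{rev}$, where only $\ev\,\hat\sigma$ or the composite with $\phi$ makes sense), and the paper's own proof applies the hypothesis in the form $\ev\,\hat\sigma(\varsigma\circ X\circ\varsigma^{-1}\tens\varsigma\,\xi)=\varsigma\,\ev\,\hat\sigma(X\tens\xi)$, i.e.\ as $\varsigma$-equivariance of the \emph{reversed} evaluation. With that reading you land on $T_1^*=\varsigma(T_2)$, not $\varsigma(T_1)$.

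The symptom that something has gone wrong is your intermediate conclusion $T_1=T_2$: if $\ev(\extd a\tens X)$ and $\ev\,\hat\sigma(X\tens\extd a)$ coincided for every twisted-hermitian $a$, the symmetric average in Definition~\ref{convder7} would be redundant, whereas the paper introduces it precisely because $*$ exchanges the two terms up to $\varsigma$, namely $T_1^*=\varsigma(T_2)$ and $T_2^*=\varsigma(T_1)$, so that only the sum $T_1+T_2$ is twisted-real while neither summand need be. (Your own closing remark, that $*$ swaps the two orderings via $\hat\sigma$, is the correct conceptual point, but it is contradicted by your computation, which concludes that there is nothing to swap.) The fix is one line: in the $T_1^*$ step replace $\varsigma\,\ev(X\tens\extd a)=\varsigma(T_1)$ by $\varsigma\,\ev\,\hat\sigma(X\tens\extd a)=\varsigma(T_2)$; the assembled conclusion $(Da/Dt)^*=\varsigma(Da/Dt)$ then follows as before, and your treatment of the $\partial_t$ term is correct as it stands.
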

\begin{proof} Using the definition of $X^*$ in Theorem~\ref{thmXstar}, we get
\[
\ev(\extd a\tens X)^*=\varsigma\,\ev(X^*\tens\extd a^*)=\ev \,\hat\sigma( \varsigma\circ X \circ \varsigma^{-1} \tens \varsigma\,\extd a) =
\varsigma\, \ev \,\hat\sigma( X \tens\extd a)
\]
and
\[
 \ev \,\hat\sigma( X \tens\extd a)^* = \ev( \extd a^*\tens X^*) =  \ev( \varsigma\,\extd a   \tens  \varsigma\circ X \circ \varsigma^{-1} ) = 
\varsigma\, \ev(\extd a\tens X)
\]
as required. \end{proof}

\medskip

\begin{proposition} Let $\hat\nabla$ be obtained from a $*$-preserving bimodule connection $\nabla$ on $\Omega^1$ as in (\ref{hatnabla}). If $X$ is a time-dependent left vector field $X$ obeying the geodesic velocity equation then
\begin{align*}
\frac{D\,\mathrm{div} X }{Dt} &= \ev(\doublenabla(\tilde\ev)(\nabla_\cX X)\tens X) -   (\tilde\ev\tens\ev)  \big(  
(\id\tens ( \id\tens\id - \sigma ))( \nabla_\cX \tens\id
+\id\tens\nabla^L\big)\nabla_\cX X    \tens X \big)  \cr
& \qquad - \tilde\ev (\id\tens\ev\tens\id) ( \nabla_\cX X\tens\nabla_\cX X).  
\end{align*}
\end{proposition}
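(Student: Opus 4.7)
The plan is to differentiate $\mathrm{div}\, X = \ev\circ\hat\nabla\, X$ in time and combine the result with the two symmetric evaluation terms from Definition~\ref{convder7}. Since $\hat\nabla$ is time-independent, $\partial_t\mathrm{div}\, X = \mathrm{div}\,\dot X$, and substituting the geodesic velocity equation from Corollary~\ref{corveleq}, $\dot X = -[X,\kappa] - (\id\tens X)\nabla_\cX X$, gives
\[
\frac{D\,\mathrm{div}\, X}{Dt} = -\mathrm{div}[X,\kappa] - \mathrm{div}((\id\tens X)\nabla_\cX X) + \tfrac12\ev(\extd\mathrm{div}\, X\tens X) + \tfrac12\ev\hat\sigma(X\tens\extd\mathrm{div}\, X).
\]
The $\kappa$-term is handled using the left bimodule Leibniz rules for $\hat\nabla$: $\mathrm{div}(X\kappa) = \ev\hat\sigma(X\tens\extd\kappa) + (\mathrm{div}\, X)\kappa$ and $\mathrm{div}(\kappa X) = \ev(\extd\kappa\tens X) + \kappa\,\mathrm{div}\, X$, so that $-\mathrm{div}[X,\kappa]$ contributes $\ev(\extd\kappa\tens X)-\ev\hat\sigma(X\tens\extd\kappa)+[\kappa,\mathrm{div}\, X]$. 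I expect this to combine with the two symmetric evaluation pieces so that the $\kappa$-dependence drops out, leaving only $\extd\mathrm{div}\, X$-type expressions that will be re-identified as part of $\doublenabla(\tilde\ev)$ below.

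The core computation is the expansion of $\mathrm{div}((\id\tens X)\nabla_\cX X)$. Writing $\nabla_\cX X = Y\tens\xi$ formally so that $(\id\tens X)\nabla_\cX X = Y\cdot X(\xi)\in\cX^L$, the Leibniz rule for $\hat\nabla$ gives $\mathrm{div}(Y\cdot X(\xi)) = \ev\hat\sigma(Y\tens\extd X(\xi)) + \mathrm{div}(Y)\,X(\xi)$. I would then use the duality between $\nabla_\cX$ and $\nabla^L$ in the form $\extd(X(\xi)) = \tilde\ev(\nabla_\cX X\tens\xi) + \tilde\ev(X\tens\nabla^L\xi)$ to re-expand $\extd X(\xi)$. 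This produces two kinds of contribution: one with $\nabla_\cX X\tens\nabla_\cX X$ that becomes the product-of-gradients term $\tilde\ev(\id\tens\ev\tens\id)(\nabla X\tens\nabla X)$, and one involving $\nabla^L\xi$ which, together with $\mathrm{div}(Y)\,X(\xi)$ from the Leibniz expansion, reassembles into the tensor-product covariant derivative $(\nabla\tens\id + \id\tens\nabla^L)\nabla X$ appearing in the proposition. Subtracting its $(\tilde\ev\tens\id)$-image from the $\extd\mathrm{div}\, X$ that remains after the $\kappa$-cancellation then produces exactly the curvature-of-evaluation $\doublenabla(\tilde\ev)(\nabla X)$, which is the first term of the proposition. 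Finally, the mismatch between the $\hat\sigma = \sigma_L^{-1}$-braided evaluation used in the Leibniz expansion and the straight evaluation used inside $\doublenabla(\tilde\ev)$ leaves exactly the $(\id-\sigma_L^{-1})$ correction that is the second term.

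The main obstacle is careful bookkeeping of bimodule structures and braidings: $\hat\nabla = \sigma_L^{-1}\nabla_\cX$ is a left connection built from the right connection $\nabla_\cX$, and the evaluation exists both as $\tilde\ev:\cX^L\tens\Omega^1\to A$ and as the flipped $\ev:\Omega^1\tens\cX^L\to A$ related by $\hat\sigma$. Isolating where a factor of $\sigma_L^{-1}$ enters --- producing the correction term --- versus where the straight evaluation applies --- producing the bimodule-map part of $\doublenabla(\tilde\ev)$ --- is the delicate step, and will likely be done most cleanly at the level of $\nabla X\tens X\in\cX^L\tens\Omega^1\tens\cX^L$ by inserting $\id = (\id - \sigma_L^{-1}) + \sigma_L^{-1}$ in the middle slot. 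As a consistency check the $(\id - \sigma_L^{-1})$ correction vanishes classically (where $\sigma_L$ is the flip), so the formula should reduce to Proposition~\ref{riccinots}, with the first term giving the Ricci contribution and the third the $(\nabla_\mu X^\nu)(\nabla_\nu X^\mu)$ contribution.
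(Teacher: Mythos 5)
Your derivation follows essentially the same route as the paper's proof: differentiate $\mathrm{div}\,X=\ev\,\hat\nabla X$ in time, substitute the velocity equation with $\kappa=\tfrac12\,\mathrm{div}\,X$ so that the commutator contributions reduce via the Leibniz rules (and $[\kappa,\mathrm{div}\,X]=0$) to $2\ev(\extd\kappa\tens X)=\ev(\extd\,\tilde\ev(\nabla X)\tens X)$ once combined with the two symmetric pieces of the convected derivative, expand $\nabla((\id\tens X)\nabla X)$ by the tensor-product Leibniz rule into the $\nabla X\tens\nabla X$ term plus a straight second-derivative term, and then split $\extd\,\tilde\ev(\nabla X)$ by the definition of $\doublenabla(\tilde\ev)$ so that its $\sigma_L^{-1}$-twisted piece merges with the straight piece into the $(\id-\sigma_L^{-1})$ correction; this is exactly the paper's computation. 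One warning about your closing consistency check: the $(\id-\sigma_L^{-1})$ term does \emph{not} vanish classically (the flip is not the identity on $\Omega^1\tens\Omega^1$) --- it is precisely this antisymmetrised second covariant derivative that yields the Ricci contribution $X^kX^rR_{kr}$ of Proposition~\ref{riccinots}, whereas it is the first term $\ev(\doublenabla(\tilde\ev)(\nabla X)\tens X)$ that vanishes classically (and in the $M_2(\C)$ and fuzzy-sphere examples), since evaluation against the dual connection is covariantly constant; had you carried out the check as stated it would appear to fail even though your formula is correct.
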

\begin{proof} We set $\kappa=\tfrac12  \mathrm{div}_{\hat\nabla} X $ in the main part of the geodesic velocity equation,  
\begin{align}
\dot X =[\kappa,X] - (\id\tens X)(\nabla_\cX  X), 
\end{align}
and calculating its divergence using 
 $ \mathrm{div}_{\hat\nabla} Y=\ev\sigma^{-1}_\cX\nabla_\cX Y$ gives
\begin{align} \label{squab}
\mathrm{div}_{\hat\nabla}\dot X = \ev \, \sigma^{-1}_\cX\big( \sigma_\cX(\extd \kappa\tens X)-X\tens\extd \kappa 
-\nabla_\cX((\id\tens X)\nabla_\cX X)\big).
\end{align}
The convected derivative from Definition~\ref{convder7} gives
\begin{equation*}
\frac{D\,\mathrm{div}_{\hat\nabla} X }{Dt} := \mathrm{div}\dot X  + \ev(\extd \kappa\tens X)+\tilde\ev(X\tens\extd \kappa)
= 2\ev(\extd \kappa\tens X)
-  \ev \, \sigma^{-1}_\cX\big( \nabla_\cX((\id\tens X)\nabla_\cX X)\big),
\end{equation*}
where we set $\tilde\ev=\ev\circ \sigma^{-1}_\cX$. Then
\begin{align*}
\nabla_\cX((\id\tens X)\nabla_\cX X) &= \nabla_\cX((\id\tens\ev)(\nabla_\cX X\tens X)) \cr
&=(\id\tens\id\tens\ev) (\nabla_\cX\tens\id\tens\id+\id\tens \nabla^L\tens\id) (\nabla_\cX X\tens X) +(\id\tens\ev\tens\id) ( \nabla_\cX X\tens\nabla_\cX X)  
\end{align*}
\[
2\ev(\extd \kappa\tens X) = \ev(\doublenabla(\tilde\ev)(\nabla_\cX X)\tens X) +  \ev(     (\tilde\ev\tens\id)  (\id\tens \sigma )  \big(  \nabla_\cX \tens\id
+\id\tens\nabla^L\big)(\nabla_\cX X)      \tens X), 
\]
where
\begin{equation}\label{dnablaev}
\doublenabla(\tilde\ev)=\extd\,\tilde\ev-(\tilde\ev\tens\id)  (\id\tens \sigma )  \big(  \nabla_\cX \tens\id
+\id\tens\nabla^L\big),\end{equation}
giving the result. \end{proof}

The last term of this formula for the convective derivative of ${\rm div}(X)$ corresponds to the `kinetic energy' or trace of $(\nabla_\cX X)^2$ term in the classical formula Proposition~\ref{riccinots}. We now give a noncommutative version of this  for a vector field adapted to behave well with respect to $*$. 

\begin{proposition}
If $\nabla$ is $*$-preserving and the conditions of Theorem~\ref{thmXstar} hold and $\nabla\circ\varsigma=(\varsigma\tens \varsigma)\circ\nabla$ then  the `kinetic energy' function
\begin{align*} F(X)={1\over 2}\left(
 \tilde\ev (\id\tens\ev\tens\id) ( \nabla_\cX X\tens\nabla_\cX X)  +  \ev (\id\tens\tilde\ev\tens\id) (\hat\nabla X\tens \hat\nabla X) \right) 
\end{align*}
sends real vector fields to twisted-hermitian elements of the algebra.
\end{proposition}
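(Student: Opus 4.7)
The claim is that for a real vector field $X$ the element $F(X)$ satisfies the twisted reality condition $F(X)^*=\varsigma(F(X))$. Write $F(X)=\tfrac12(A_1+A_2)$ with $A_1=\tilde\ev(\id\tens\ev\tens\id)(\nabla X\tens\nabla X)$ and $A_2=\ev(\id\tens\tilde\ev\tens\id)(\sigma^{-1}\nabla X\tens\sigma^{-1}\nabla X)$. The manifest symmetry between these two terms, together with the appearance of $\sigma^{-1}$ on both copies of $\nabla X$ in $A_2$ and the swap $\ev\leftrightarrow\tilde\ev$, suggests that they are exchanged (up to the twist) under $*$. My strategy is therefore to prove $A_1^*=\varsigma(A_2)$, from which $A_2^*=\varsigma(A_1)$ follows by taking $*$ and using $\varsigma(a^*)^*=\varsigma^{-1}(a)$ from Definition~\ref{divr8} together with $\varsigma^2$--cancellation at the level of $F(X)$; summing gives the desired twisted reality.

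\textbf{Tools and first reduction.} Three ingredients drive the computation. First, the reality of $X$ in the form $X^*=\varsigma\circ X\circ\varsigma^{-1}$ from Theorem~\ref{thmXstar}(5), which in particular extends to the tensorial expressions because of the hypothesis $\nabla\circ\varsigma=(\varsigma\tens\varsigma)\circ\nabla$: one obtains $\nabla X^*=(\varsigma\tens\varsigma)\,\nabla X\circ\varsigma^{-1}$ after using that the right connection $\nabla=\nabla_\cX$ is $A$-bilinear in the appropriate sense. Second, the $*$-preserving identity (\ref{starpr}) for $\nabla^L$, which dualises to the corresponding $*$-preserving identity $\sigma_\cX\,\dagger\,\nabla_\cX(X^*)=\nabla_\cX(X)$ on $\cX^L$, so that applying $*$ componentwise to the tensor $\nabla X$ and reversing the order introduces precisely a factor of $\sigma_\cX^{-1}=\sigma^{-1}$. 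Third, the interaction $\tilde\ev=\ev\circ\sigma^{-1}$ already used in the preceding proposition, and the compatibility of $\ev$ with $*$ via $\ev(X\tens\xi)^*=\ev\hat\sigma(X\tens\xi^*)^*$ from the definition of $X^*$ in Theorem~\ref{thmXstar}.

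\textbf{Computation of $A_1^*$.} Apply $*$ to $A_1$; this reverses the order of the fourfold tensor $\nabla X\tens\nabla X$, turning $(\id\tens\ev\tens\id)$ into $(\id\tens\ev\tens\id)$ on the reversed string (the middle evaluation is unchanged since $\ev$ pairs one $\cX^L$ and one $\Omega^1$ regardless of order), and turning the outer $\tilde\ev$ into $\ev$ applied to the reversed pair. Substituting the reality identity for $\nabla X$ componentwise produces two copies of $\sigma^{-1}\nabla X$, each multiplied by $\varsigma$ on both factors; by the hypothesis $\nabla\circ\varsigma=(\varsigma\tens\varsigma)\circ\nabla$ and the fact that $\varsigma$ is an algebra automorphism commuting with all three evaluations (since $\varsigma$ extends to $\Omega^1$ and intertwines with $\sigma$ through $*$-preservation), all four copies of $\varsigma$ can be pulled to the outside as a single $\varsigma$. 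The resulting expression matches $\varsigma(A_2)$ exactly.

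\textbf{Main obstacle.} The hard part is verifying that the combinatorics of $\sigma^{-1}$ and $\varsigma$ indeed line up to produce $\varsigma(A_2)$ rather than some twisted cousin. Specifically, the middle $\ev$ in $A_1$ pairs the $\Omega^1$ of the first copy with the $\cX^L$ of the second, while after taking $*$ and reversing, the middle $\tilde\ev$ in $A_2$ pairs them in the opposite sense, so one must check that the accumulated $\sigma^{-1}$'s coming from $*$-reversal on the outer pair and from converting the outer $\ev$ into $\tilde\ev$ cancel appropriately. Once this bookkeeping is done, commuting $\varsigma$ past each $\sigma^{-1}$ and each $\ev$ (for which the hypothesis $\ev\hat\sigma(\varsigma X\varsigma^{-1}\tens\varsigma\xi)=\varsigma\ev(X\tens\xi)$ of the preceding proposition, together with $\nabla\circ\varsigma=(\varsigma\tens\varsigma)\circ\nabla$, supplies the needed commutations) finishes the identification $A_1^*=\varsigma(A_2)$ and hence the theorem.
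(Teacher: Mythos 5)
Your proposal is correct and follows essentially the same route as the paper: decompose $F(X)=\tfrac12(A_1+A_2)$, use the definition of $X^*$ (equivalently $(\tilde\ev(Y\tens\xi))^*=\ev(\xi^*\tens Y^*)$ and its converse) together with the dualised $*$-preserving property $\dagger\,\nabla_\cX X=\sigma^{-1}\nabla_\cX(X^*)$ to get $A_1^*=\ev(\id\tens\tilde\ev\tens\id)(\sigma^{-1}\nabla X^*\tens\sigma^{-1}\nabla X^*)$, and then pull $\varsigma$ out via $\nabla\circ\varsigma=(\varsigma\tens\varsigma)\circ\nabla$ and $\ev(\varsigma\tens\varsigma)=\varsigma\,\ev$ to conclude $A_1^*=\varsigma(A_2)$ and hence twisted hermiticity. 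The only slip is the parenthetical claim that the middle evaluation is ``unchanged'' under $*$-reversal --- it becomes $\tilde\ev$, exactly as your target expression $A_2$ requires and as you acknowledge later, so this does not affect the argument.
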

\begin{proof} We set $\nabla_\cX X=Y_i\tens\xi_i$ for $i=1,2$ as two independent expressions for it. Now
\begin{align*}
 \big(\tilde\ev (\id\tens\ev\tens\id) &( \nabla_\cX X\tens\nabla_\cX X) \big)^*=\big(\tilde\ev(Y_1\tens \ev(\xi_1\tens Y_2) \xi_2)\big)^*
 = \ev( \xi_2{}^* \,\ev(\xi_1\tens Y_2)^*  \tens Y_1{}^*) \\
 &= \ev(\id\tens\tilde\ev\tens\id)( \xi_2{}^* \tens Y_2{}^* \tens \xi_1{}^*  \tens Y_1{}^*) = \ev(\id\tens\tilde\ev\tens\id)(\hat\nabla X^*\tens \hat\nabla X^*)\ .
\end{align*}
If $X$ is real then the result follows from this, as $\varsigma\tens \varsigma$ commutes with $\sigma$ and $\ev(\varsigma\tens \varsigma)=\varsigma\,\ev$ and similarly for $\tilde\ev$ gives the answer. \end{proof}

\medskip 
As a result, we can write
\begin{equation}\label{divRF}
\frac{D\,\mathrm{div} X }{Dt} = -R(X)  - F(X)\end{equation}
where 
\begin{align*} 
R(X)&=-\ev(\doublenabla(\tilde\ev)(\nabla_\cX X)\tens X) +   (\tilde\ev\tens\ev)  \big(  
(\id\tens ( \id\tens\id - \sigma ))( \nabla_\cX \tens\id
+\id\tens\nabla^L)\nabla_\cX X    \tens X \big)  \cr
& \qquad + \tfrac12 \tilde\ev (\id\tens\ev\tens\id) ( \nabla_\cX X\tens\nabla_\cX X)  -\tfrac12 \ev (\id\tens\tilde\ev\tens\id) (\hat\nabla X\tens \hat\nabla X) \end{align*}
plays the role classically of quadratic form $X^k\,X^r \,R_{kr}$ on  $X$ featuring in  Proposition~\ref{riccinots} for the convective derivative of ${\rm div}(X)$. By construction, {\em if} $X_t$ is initially real and obeys the secondary condition in Corollary~\ref{corXreal} to stay real as it evolves then $R(X)$ will likewise be a twisted-hermitian element of the algebra, because the other parts of (\ref{divRF}) are. We therefore propose it as a `quadratic form'  version of the Ricci tensor derived from looking at geodesic velocity vector fields but applicable on any $X\in \cX $. 

\begin{corollary} If $X_t$ obeys the geodesic velocity equation then 
\[ \int \big(R(X)+F(X)-{\rm div}(X)^2\big)=0.\]
\end{corollary}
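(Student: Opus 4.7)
The plan is simply to integrate the identity (\ref{divRF}) against $\int=\phi$ and read off the claim. Thus the target is to establish $\int \frac{D\,\mathrm{div}(X)}{Dt}=-\int\mathrm{div}(X)^2$, since combined with $\frac{D\,\mathrm{div}(X)}{Dt}=-R(X)-F(X)$ this immediately gives $\int R(X)+F(X)-\mathrm{div}(X)^2=0$. Morally this is the noncommutative counterpart of the classical identity $\int \frac{Df}{Dt}=-\int f\,\mathrm{div}(X)$ (from $\int X(\extd f)=-\int f\,\mathrm{div}(X)$) specialised to $f=\mathrm{div}(X)$.

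Unpacking $\frac{D\,\mathrm{div}(X)}{Dt}$ via Definition~\ref{convder7} produces three pieces. (i) The $\partial_t$-term integrates to $\frac{d}{dt}\int\mathrm{div}(X_t)$; since the standing hypothesis of Theorem~\ref{thmXstar} is that $\mathrm{div}_\phi=\mathrm{div}_{\hat\nabla}$, Proposition~\ref{propdiveq} forces $\phi\circ\mathrm{div}\equiv 0$, so $\int\mathrm{div}(X_t)=0$ for all $t$ and this piece vanishes. (ii) For the term $\tfrac12\int \ev(\extd\,\mathrm{div}(X)\tens X)$, apply Definition~\ref{divsta} with $a=\mathrm{div}(X)$; it collapses to $-\tfrac12\int\mathrm{div}(X)^2$.

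The remaining piece $\tfrac12\int \ev\,\hat\sigma(X\tens\extd\,\mathrm{div}(X))$ is the only nontrivial step. Here I would invoke the identity $\int \ev\,\hat\sigma(X\tens\xi)=\int \ev(\varsigma\xi\tens X)$ from the Proposition preceding Lemma~\ref{vbhu}, with $\xi=\extd\,\mathrm{div}(X)$. Using $\varsigma\,\extd=\extd\,\varsigma$ (the extension of $\varsigma$ to $\Omega^1$ assumed in Theorem~\ref{thmXstar}) together with Definition~\ref{divsta} applied to $a=\varsigma(\mathrm{div}(X))$, this rewrites to $-\tfrac12\int \varsigma(\mathrm{div}(X))\cdot\mathrm{div}(X)$. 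Finally the twisted-trace cyclicity of Definition~\ref{divr8} in the form $\int\varsigma(b)a=\int ab$, applied with $a=b=\mathrm{div}(X)$, reduces this to $-\tfrac12\int\mathrm{div}(X)^2$.

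Summing the three contributions gives $\int\frac{D\,\mathrm{div}(X)}{Dt}=-\int\mathrm{div}(X)^2$, and substituting into the integrated form of (\ref{divRF}) yields the stated identity. The main obstacle is purely the third piece, which is the one spot where the bimodule-connection datum $\hat\sigma$, the extension of $\varsigma$ to $\Omega^1$, and the twisted-trace property must be braided together in the correct order; the rest is direct integration by parts built into Definition~\ref{divsta}.
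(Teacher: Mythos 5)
Your proposal is correct and follows essentially the same route as the paper: integrate the identity (\ref{divRF}) and show $\int \frac{D\,\mathrm{div}(X)}{Dt}=-\int \mathrm{div}(X)^2$ using $\int\circ\,\mathrm{div}=0$ and the integration-by-parts built into Definition~\ref{divsta}. In fact you are more careful than the paper's one-line computation, which tacitly treats both halves of the symmetrized convective derivative of Definition~\ref{convder7} as $X(\extd\,\mathrm{div}(X))$; your explicit handling of the $\hat\sigma$ half via $\phi\,\ev\,\hat\sigma(X\tens\xi)=\phi\,\ev(\varsigma\xi\tens X)$ and the twisted-trace property is exactly the justification needed.
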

\begin{proof} We use (\ref{divRF}) in
\[ \int {D{\rm div}(X)\over D t}= \int {\extd\, {\rm div}(X)\over \extd t}+X(\extd\, {\rm div}(X)={\extd \over\extd t}\int{\rm div}(X)-{\rm div}(X)^2=-\int{\rm div}(X)^2.\]
\end{proof}

\section{Quantum geodesics on $M_2(\C)$}\label{secmat}

We take $M_2(\C)$ with its standard differential calculus $\Omega=M_2[s,t]/\<s^2,t^2\>$, where $s$ and $t$ are central and
$\extd f=(\del_sf)s+(\del_tf)t$ is given on$f\in M_2(\C)$ by
\[ \del_s f= [E_{12},f],\quad \del_t f=[E_{21},f].\]
The calculus is inner  with $\theta=E_{12}s+E_{21}t$ and 
$\extd$ on higher forms is likewise given by a graded commutator, in particular
\[ \extd s=2\theta s=2E_{21}st,\quad \extd t=2\theta t=2E_{12}st.\]
There is a natural  $*$ structure $s^*=-t$ and we take lift  $i(st)= {1\over 2}(s\tens t+ t\tens s)$. Here $i((st)^*)=i(st)=i(st)^\dagger$ so commutes with $*$ rather than anticommuting as required to ensure the usual reality properties of Ricci. Metrics are given by four complex coefficients in the tensor product basis with condition for `reality' $g^\dagger=g$ of the metric, see\cite{BegMa}. Here we consider just the metric
\[ g=s\tens s+t\tens t\]
as a sample in this moduli space. The QLCs for this are not unique but there is a  natural 3-parameter moduli of inner connections defined solely by the generalised braiding as a bimodule map\cite[p. 776]{BegMa}
 \begin{align} \label{prid}
 \sigma_L=\begin{pmatrix}  1-\mu & \rho & -\rho & -\nu \\
 \rho+{\mu(\mu+\nu-2)\over \rho} & -\mu & \mu-1 & -\rho \\
 \rho & \nu-1 & -\nu & -\rho-{\nu(\mu+\nu-2)\over\rho}\\
 -\mu & \rho & -\rho & 1-\nu \end{pmatrix};\quad \mu,\nu,\rho\in \C, 
\end{align}
where the conventions are such that the second row gives the coefficients of $\sigma_L(s\tens t)$
in basis order $s\tens s$, $s\tens t $,   $t\tens s$,   $t\tens t$. This corresponds to the left connection on $\Omega^1$ given by
\begin{align*}\nabla^L s= - \Gamma^s{}_{bc} \, b\tens c   = 2E_{21} t\tens s&+ \Big(\mu E_{12}-(\rho +(\mu+\nu-2){\mu\over\rho})E_{21}\Big)s\tens s\\
&+(\mu E_{21}-\rho E_{12})(s\tens t-t\tens s)+(\nu E_{12}+\rho E_{21})t\tens t,\\
\nabla^L t   = - \Gamma^t{}_{bc} \, b\tens c = 2E_{12}s\tens t&+(\mu E_{21}-\rho E_{12})s\tens s-(\nu E_{12}+\rho E_{21})(s\tens t-t\tens s)\\
&+\Big(\nu E_{21}+(\rho +(\mu+\nu-2){\nu\over\rho})E_{12}\Big)t\tens t\end{align*}
which defines the Christoffel symbols $\Gamma^a{}_{bc}$ for $a,b,c\in\{s,t\}$. The connection is $*$-preserving which $\sigma\dagger\sigma=\dagger$, which is when $\bar\rho=-\rho,\nu=\bar\mu$. 

The dual right connection on $\cX$ is then given by 
$\nabla_\cX f_e=\Gamma^a{}_{be} \, f_a\tens b$. For the basis order $s\tens f_s$, $s\tens f_t$, $t\tens f_s$, $t\tens f_t$ on $\Omega^1\tens \cX$ and $f_s\tens s$, $f_s\tens t$, $f_t\tens s$, $f_t\tens t$ on $\cX\tens \Omega^1$
and using the same conventions as (\ref{prid}), its braiding is therefore
 \begin{align} 
\label{pridvec}
 \sigma_\cX=\begin{pmatrix} 1-\mu & -\rho & \rho & -\nu  \\
 \rho & -\nu & \nu-1 &   -\rho-{\nu(\mu+\nu-2)\over\rho}   \\
   \rho+{\mu(\mu+\nu-2)\over \rho}  &  \mu-1 & -\mu & -\rho   \\
    -\mu & -\rho & \rho & 1-\nu     \end{pmatrix};\quad \mu,\nu,\rho\in \C. 
\end{align}

\begin{proposition}\label{propM2div} The geometric divergence for the above inner connections agrees with the state divergence for a nondegenerate positive linear functional $\phi$ if and only if $\mu=\nu=0$ and $\phi={1\over 2}{\rm Tr}$.  Then ${\rm div}(f_s X^s+f_t X^t)=[E_{12},X^s]+ [E_{21},X^t]$ on a vector field in $\cX$ with components in a dual basis $f_s,f_t$. Moreover, $f_s^*=-f_t$. 
\end{proposition}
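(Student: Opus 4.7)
The plan is to apply Proposition~\ref{propdiveq}: the state divergence $\mathrm{div}_\phi$ agrees with the geometric divergence $\mathrm{div}_{\hat\nabla}$ on all of $\cX^L$ precisely when $\phi\circ \mathrm{div}_{\hat\nabla}=0$, where $\hat\nabla=\sigma_\cX^{-1}\nabla_\cX$. I would begin by computing the two matrices $\mathrm{div}(f_s),\mathrm{div}(f_t)\in M_2(\C)$ from the explicit formula $\mathrm{ev}\circ\sigma_\cX^{-1}\circ\nabla_\cX$, reading off $\nabla_\cX f_e=\Gamma^a{}_{be}\,f_a\tens b$ from the Christoffel data displayed in the text and inverting the $4\times 4$ matrix (\ref{pridvec}) of $\sigma_\cX$. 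The outcome is two $2\times 2$ matrices whose entries are rational in $\mu,\nu,\rho$.

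Every positive linear functional on $M_2(\C)$ has the form $\phi(a)=\mathrm{Tr}(Da)$ for some positive $D$. Using the Leibniz rule $\mathrm{div}(af_e)=a\,\mathrm{div}(f_e)+f_e(\extd a)$ together with the commutator form of $\partial_e$ and cyclicity of trace, the condition $\phi\circ\mathrm{div}_{\hat\nabla}=0$ on all of $\cX^L$ becomes the pair of matrix equations
\[
\mathrm{div}(f_s)\,D=[E_{12},D],\qquad \mathrm{div}(f_t)\,D=[E_{21},D].
\]
I would then analyse when this pair admits a strictly positive solution $D$, and show that this forces $\mu=\nu=0$; in that case one reads off from the formulas that $\mathrm{div}(f_s)=\mathrm{div}(f_t)=0$, so the equations collapse to $[E_{12},D]=[E_{21},D]=0$, forcing $D\in Z(M_2(\C))=\C I$. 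Normalising $\phi(1)=1$ then gives $\phi=\tfrac12\mathrm{Tr}$.

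With $\mathrm{div}(f_s)=\mathrm{div}(f_t)=0$ in hand, the divergence formula $\mathrm{div}(f_sX^s+f_tX^t)=[E_{12},X^s]+[E_{21},X^t]$ is immediate from Leibniz and the identifications $\partial_s=[E_{12},\cdot]$, $\partial_t=[E_{21},\cdot]$. For the last claim $f_s^*=-f_t$, since $\phi=\tfrac12\mathrm{Tr}$ is an untwisted trace we have $\varsigma=\mathrm{id}$, and the definition in Theorem~\ref{thmXstar} specialises to $f_s^*(\xi)=\big(\mathrm{ev}\,\sigma_\cX^{-1}(f_s\tens \xi^*)\big)^*$. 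Using $s^*=-t$ together with the inverse of (\ref{pridvec}) at $\mu=\nu=0$ and the duality pairing $f_s(s)=1,\,f_s(t)=0$, a direct evaluation on $\xi=s,t$ gives $f_s^*(s)=0$ and $f_s^*(t)=-1$, matching $-f_t$ on the basis $\{s,t\}$.

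The main obstacle will be the first two steps: extracting the entries of $\mathrm{div}(f_s)$ and $\mathrm{div}(f_t)$ from the inverse of (\ref{pridvec}) and the Christoffel data, and then showing that the resulting two-parameter system of matrix equations for $D$ admits a strictly positive solution only when $\mu=\nu=0$. Once these are established, the divergence formula and the reality computation for $f_s^*$ are routine bookkeeping.
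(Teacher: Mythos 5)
Your proposal is correct and follows essentially the same route as the paper: the paper likewise computes $\mathrm{div}_{\hat\nabla}(f_s),\mathrm{div}_{\hat\nabla}(f_t)$ from the Christoffel data, writes a general positive functional as $\phi(a)=\mathrm{Tr}(aN)$, and your pair of matrix equations $\mathrm{div}(f_e)D=[E_e,D]$ is exactly the paper's comparison $\mathrm{div}_{\hat\nabla}(f_s)=E_{12}-NE_{12}N^{-1}$, $\mathrm{div}_{\hat\nabla}(f_t)=E_{21}-NE_{21}N^{-1}$ after multiplying through by $N$, while the final evaluation of $f_s^*$ via $\hat\sigma=\sigma_\cX^{-1}$ on the basis matches the paper's verbatim. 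The only caveat is that you defer the decisive elimination showing no positive $D$ exists unless $\mu=\nu=0$ (and then $D=\tfrac12 I$), but this is the same finite computation the paper performs, so the plan is sound.
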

\begin{proof} We first calculate $\hat\nabla$ the details of which are omitted. This then gives
\begin{align*}
\mathrm{div}_{\hat\nabla}(f_s) &= E_{12}(\mu+\nu)+E_{21}(2-\mu-\nu)\frac{\mu}{\rho} \cr
\mathrm{div}_{\hat\nabla}(f_t) &= E_{21}(\mu+\nu)-E_{12}(2-\mu-\nu)\frac{\nu}{\rho}.  
\end{align*}
where $f_s,f_t$ are the dual basis vector fields in $\cX $. For comparison, if we set $\phi(a)=\mathrm{Tr}(aN)$ for $N$ a trace 1 positive matrix which is invertible for $\phi$ to be non-degenerate. Then the state divergence is
\begin{align}\label{divphiM2}
\mathrm{div}_\phi(f_s) &= E_{12}-NE_{12}N^{-1},\quad \mathrm{div}_\phi(f_t) = E_{21}-NE_{21}N^{-1}  
\end{align}
and the only case where these coincide is $N=\tfrac12 I_2$ and $\mu=\nu=0$. Hence we restrict now this one-parameter moduli space given by $\rho$ where the geometric divergence and the $\phi$-divergence coincide, where $\phi={1\over 2}{\rm Tr}$. For the $*$-structure on $\cX $ in Theorem~\ref{thmXstar}, we proceed with $\mu=\nu=0$ and
using
 \[ \hat\sigma(f_s\tens s)=s\tens f_s+\rho\,s\tens f_t-\rho\,t\tens f_s\ ,\ 
\hat\sigma(f_s\tens t)=\rho\,s\tens f_s -   t\tens f_s-\rho\,t\tens f_t
\]
we have 
\[ f_s^*(s)=(\ev\hat\sigma(f_s\tens (-t)))^*=0,\quad f_s^*(t)=(\ev\hat\sigma(f_s\tens (-s)))^*=-1\]
and similarly for $f_t$. 
\end{proof}

In view of this lemma, we now take $\int={1\over 2}{\rm Tr}$ as our preferred state. Note that there is also a significant 4-parameter moduli of QLCs with $\sigma=-{\rm flip}$ on the basis elements and admitting a bimodule map $\alpha:\Omega^1\to \Omega^1\tens_A\Omega^1$ going beyond the inner case\cite[Exercise~8.3]{BegMa}. Our analysis above can be extended to this wider class as well as repeated for other metrics. 

\subsection{Ricci quadratic form for general $\rho$}

Proceeding with the $\mu=\nu=0$, we still have a 1-parameter moduli of QLCs,
\begin{align*}\nabla^L s=2E_{21} t\tens s&-\rho E_{12}(s\tens t-t\tens s)+\rho E_{21}(t\tens t-s\tens s),\\
\nabla^L t=2E_{12}s\tens t&-\rho E_{21}(s\tens t-t\tens s)+\rho E_{12}(t\tens t-s\tens s).\end{align*}
where $\rho$ is an imaginary parameter. The curvature is 
\begin{align*}
R_{\nabla^L}s&=2(E_{11}-E_{22}) s\wedge t \tens(\rho t+s) +2\rho^2 s\wedge t\tens s  + 4 E_{22} s\wedge t\tens s
-2 (E_{11}-E_{22}) s\wedge t \tens \rho t \cr
&=    2(1+\rho^2 )s\wedge t\tens s  
\cr
R_{\nabla^L} t&= 2(E_{11}-E_{22}) s\wedge t \tens (\rho s-t) +2\rho^2 s\wedge t\tens  t
+ 4 E_{11} s\wedge t\tens  t-2(E_{11}-E_{22}) s\wedge t \tens \rho s \cr
&= 2(1+\rho^2 )s\wedge t\tens t
\end{align*}
The usual Ricci tensor defined by the canonical lift $i$ is\cite[Eqn. (8.21)]{BegMa}
\begin{equation}\label{riccimat} {\rm Ricci}=(1+\rho^2)(s\tens t+t\tens s),\end{equation}
which is still hermitian with the $*$-structures (even though $i$ is not suitably antihermitian), but not quantum symmetric. Here  $\rho=\pm\imath$ gives a pair of natural flat QLCs on $M_2(\C)$. We aim to contrast this with what we get for the Ricci quadratic form.

From the above, the dual right  connection on vector fields now simplifies to 
\begin{align*}\nabla_\cX f_s&= \rho\,E_{21}\, f_s\tens s -(\rho\,E_{12}+2\,E_{21})f_s\tens t+\rho\,E_{12} f_t\tens s-\rho\,E_{21} f_t\tens t, 
 \\
\nabla_\cX f_t&= \rho\,E_{12}    f_s\tens s    - \rho\,E_{21}     f_s\tens t     +(   \rho\,E_{21}  - 2\,E_{12})     f_t\tens s  - \rho\, E_{12}      f_t\tens t, \\
\sigma_\cX(s\tens f_s)&= f_s\tens s+\rho(f_t\tens s-f_s\tens t),\qquad \sigma_\cX(s\tens f_t)=-f_t\tens s+\rho(f_s\tens t-f_t\tens t),  \\
\sigma_\cX(t\tens f_s)&= -f_s\tens t+\rho(f_s\tens s-f_t\tens t),\qquad \sigma_\cX(t\tens f_t)=f_t\tens t+\rho(f_t\tens s-f_s\tens t).
\end{align*}

In what follows, we adopt an index notation with $a=s,t$ and 
\[ D_bX^a=\del_bX^a+\Gamma^a{}_{bc}X^c\]
so that $\nabla_{\cX} X=f_a\tens b D_bX^a$ if  $X= f_s X^s+ f_t X^t\in \cX $, where we sum over repeated indices. 

\begin{proposition}   $\doublenabla(\tilde\ev)=0$. Moreover, writing  $Y_{ab}=D_bX^a$ for brevity,
\[ F(X)=Y_{ab}Y_{ba} + \rho\, (Y_{ts}Y_{ss} + Y_{ss} Y_{ts}- Y_{tt}Y_{st}-Y_{st}Y_{tt})+ \rho^2\,( (Y_{st}+Y_{ts})^2-(Y_{ss}+Y_{tt})^2)\]
\begin{align*} 
R(X)&=-2(X^sX^t+X^tX^s) +2\rho\Big(  (X^s){}^2-(X^t){}^2  + \left(
\begin{array}{cc}
 b_s b_t-c_s c_t & -a_s
   c_t-c_s d_t \\
 a_s b_t+b_s d_t & b_s b_t-c_s
   c_t \\
\end{array}
\right)
 \Big) \\
 &\quad + \rho^2\left(
\begin{array}{cc}
 -b_s^2+b_t^2+c_s^2-c_t^2 & (a_s + d_s)
   (b_t+c_s)- (a_t+d_t) (b_s+c_t)
   \\
 -(a_s + d_s) (b_s+c_t)+(a_t+ d_t)
   (b_t+c_s)&
   -b_s^2+b_t^2+c_s^2-c_t^2 
\end{array}
\right)  
 \end{align*}
where
\[ X^s=\left(\begin{array}{cc} a_s  &  b_s  \\  c_s &  d_s  \end{array}\right)
\ ,\ 
X^t=\left(\begin{array}{cc} a_t  & b_t  \\ c_t  &d_t \end{array}\right)\ .\]
\end{proposition}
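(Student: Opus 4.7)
The proposition has three parts: the vanishing of $\doublenabla(\tilde\ev)$, the explicit formula for $F(X)$, and the explicit formula for $R(X)$. I would handle them in that order, since the vanishing of $\doublenabla(\tilde\ev)$ kills the first term of $R(X)$ and thereby simplifies part three.

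For the vanishing $\doublenabla(\tilde\ev)=0$, the plan is to check the identity on each of the four basis elements $a\tens f_c$ with $a\in\{s,t\}$ and $c\in\{s,t\}$. Since $\tilde\ev(a\tens f_c)=\delta^a_c\in\C$, the left-hand term $\extd\,\tilde\ev$ vanishes identically, so what must be verified is
$(\tilde\ev\tens\id)(\id\tens\sigma_L^{-1})\bigl(\nabla^L a\tens f_c+a\tens \nabla_\cX f_c\bigr)=0.$
This is essentially the statement that $\nabla_\cX$ is defined dually to $\nabla^L$; concretely one substitutes the formulas for $\nabla^L s,\nabla^L t,\nabla_\cX f_s,\nabla_\cX f_t$ from the preceding paragraphs, applies $\sigma_L^{-1}$ using the matrix (\ref{prid}) inverted on the basis of $\Omega^1\tens\Omega^1$, evaluates, and checks the terms cancel by virtue of $\sigma_\cX$ being the braiding induced from $\sigma_L$.

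For $F(X)$, I would first expand $\nabla_\cX X$ via the right Leibniz rule applied to $X=f_a X^a$, collecting into the form $\nabla X=f_a\tens b\cdot Y_{ab}$ where $Y_{ab}=D_b X^a=\del_b X^a+\Gamma^a{}_{bc}X^c$, with $\Gamma^a{}_{bc}\in M_2(\C)$ read from the displayed formulas for $\nabla_\cX f_s,\nabla_\cX f_t$. Then each of the two summands of $F(X)$ is computed basis-by-basis. The first summand $\tilde\ev(\id\tens\ev\tens\id)(\nabla X\tens\nabla X)$ expands as $\tilde\ev(f_a\tens Y_{ab}\cdot\ev(b\tens f_c)\cdot Y_{cd}\,d)$, collapses via $\ev(b\tens f_c)=\delta^c_b$, and is then closed by $\tilde\ev=\ev\sigma_\cX^{-1}$ using (\ref{pridvec}). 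The second summand is symmetric but with $\sigma^{-1}$ inserted into each factor of $\nabla X$. Averaging both contributions (this symmetrisation is precisely why the proposition $F$ was defined this way) and simplifying using $\bar\rho=-\rho$ collects terms into the stated polynomial in $Y_{ab},\rho$.

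For $R(X)$ the first of its four constituent terms vanishes by part one, and the third and fourth combine into the $\pm\tfrac12$ difference of the two pieces that made up $F(X)$, which collects into the $\rho$-linear and $\rho^2$ matrix corrections in the statement. The main work is the second term $(\tilde\ev\tens\ev)\bigl((\id\tens(\id-\sigma_L^{-1}))(\nabla_\cX\tens\id+\id\tens\nabla^L)\nabla X\tens X\bigr)$: here I would compute the second covariant derivative $(\nabla_\cX\tens\id+\id\tens\nabla^L)\nabla X$ by differentiating both slots of $f_a\tens b\cdot Y_{ab}$, use $(\id-\sigma_L^{-1})$ to kill the $\sigma_L$-symmetric part (so that only a curvature-like antisymmetrisation survives), and evaluate against $X$. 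The constant Christoffel symbols simplify things, since $\del\Gamma$-type terms drop out and only $\Gamma\Gamma$-commutators remain; these are computed using $[E_{12},E_{21}]=E_{11}-E_{22}$ and the inner nature of the calculus. The leading noncommutative contribution $-2(X^sX^t+X^tX^s)$ comes precisely from these commutators.

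The main obstacle is keeping bimodule positions straight in the second covariant derivative. The matrix coefficients $E_{ij}$ do not commute with the entries of $X^a$, and every application of $\sigma_L^{-1}$ shuffles them across the tensor slots. I expect the calculation to involve around a dozen separate terms in each matrix entry, collapsing dramatically once the antisymmetrising factor $(\id-\sigma_L^{-1})$ is applied and the identity $\rho^2=-|\rho|^2$ (from $\bar\rho=-\rho$) is used to combine mixed $\rho\bar\rho$ terms into the stated pure-$\rho^2$ matrix.
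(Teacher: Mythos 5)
Your overall strategy coincides with the paper's: evaluate $\doublenabla(\tilde\ev)$ on the basis, compute the two summands of $F(X)$ termwise and average, observe that the last two terms of $R(X)$ are $\pm\tfrac12$ of those same two summands, and devote the real work to the middle term via the second covariant derivative of $\nabla X$ followed by $(\id-\sigma_L^{-1})$ and evaluation against $X$. Two bookkeeping points, however. First, the domain of $\tilde\ev=\ev\circ\sigma_\cX^{-1}$ is $\cX^L\tens_A\Omega^1$, so the basis elements to test are $f_a\tens b$, not $a\tens f_c$, and the relevant tensor-product connection is $(\nabla_\cX\tens\id+\id\tens\nabla^L)$. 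Second, $\doublenabla(\tilde\ev)=0$ is not a formal consequence of $\nabla_\cX$ being dual to $\nabla^L$ --- that duality would give $\doublenabla(\ev)=0$ --- the $\tilde\ev$ version carries the extra $\sigma_L^{-1}$ and reduces, as the paper checks, to identities such as $\Gamma^t{}_{ss}=\Gamma^s{}_{st}$ and $\Gamma^s{}_{tt}=\Gamma^t{}_{ts}$ that happen to hold for this family; your plan to substitute and verify is the right one, but the outcome is model-specific, not automatic.

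The one step that would actually fail as described is the simplification ``$\del\Gamma$-type terms drop out and only $\Gamma\Gamma$-commutators remain.'' The Christoffel symbols here are the matrices $\pm2E_{21}$, $\rho E_{12}$, etc., which are constant as numerical arrays but not as elements of $M_2(\C)$ with its inner calculus: $\extd E_{12}=[E_{21},E_{12}]\,t\neq 0$, so $\del_c\Gamma^a{}_{bd}\neq 0$ in general. Equivalently, in the operator form the Leibniz rule applied to $\nabla X=f_a\tens b\,Y_{ab}$ produces the term $f_a\tens\sigma_L(b\tens\extd Y_{ab})$, i.e.\ commutators $[E_c,Y_{ab}]$, and these survive the antisymmetrisation $(\id-\sigma_L^{-1})$ (already at $\rho=0$, where $\sigma_L$ acts as $-{\rm flip}$ on $s\tens t$ and $t\tens s$) and contribute to the $\rho$-independent part of $R(X)$ alongside the $\Gamma\Gamma$ terms. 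If you keep all of these --- as your phrase ``differentiating both slots'' suggests you intend --- the computation closes exactly as in the paper; if you literally discard everything but the $\Gamma\Gamma$ commutators you will not recover $-2(X^sX^t+X^tX^s)$. (A last, harmless, point: no $\bar\rho$ ever enters --- the formulas for $\sigma_L$, $\nabla^L$ and $\nabla_\cX$ are polynomial in $\rho$ alone, so there are no mixed $\rho\bar\rho$ terms to convert.)
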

\begin{proof} We first calculate, with $\hat\nabla=\sigma^{-1}_\cX\nabla_\cX$, 
\begin{align*} 
 \tilde\ev (\id\tens\ev\tens\id) ( \nabla_\cX X\tens\nabla_\cX X) &= Y_{ab}Y_{ba} \\
  \ev (\id\tens\tilde\ev\tens\id) (\hat\nabla X\tens \hat\nabla X) &= Y_{ab}Y_{ba} +2 \rho\, (Y_{ts}Y_{ss} + Y_{ss} Y_{ts}- Y_{tt}Y_{st}-Y_{st}Y_{tt})\\
  &\quad +2 \rho^2\,( (Y_{st}+Y_{ts})^2-(Y_{ss}+Y_{tt})^2)
 \end{align*}
 giving $F(X)$ as stated. Next, we calculate 
\[
\doublenabla(\tilde\ev)=\extd\,\tilde\ev-(\tilde\ev\tens\id)  (\id\tens \sigma )  \big(  \nabla_\cX \tens\id
+\id\tens\nabla^L\big)
\]
and as it is a right module map we only need to evaluate it on $f_a\tens b$ for $a,b\in\{s,t\}$. First $\extd\,\tilde\ev(f_a\tens b)=0$ and then, summing over repeats
\[
\nabla_\cX f_a\tens b+f_a\tens\nabla^Lb=\Gamma^c{}_{ea} f_c\tens e\tens b - \Gamma^b{}_{en} f_a\tens e \tens n
\]
Now using the formula for $\sigma_L$, for example
\[
 (\tilde\ev\tens\id)  (\id\tens \sigma )  \big(  \nabla_\cX \tens\id
+\id\tens\nabla^L\big)(f_s\tens s)= \rho(\Gamma^t{}_{ss}-\Gamma^s{}_{st})s+\rho(\Gamma^s{}_{tt}-\Gamma^t{}_{ts})t
\]
and substituting for the Christoffel symbols and looking at the other cases
gives $\doublenabla(\tilde\ev)=0$ as claimed. Next,  we calculate
\[
(\tilde\ev\tens\id)   
(\id\tens ( \id\tens\id - \sigma ))     ( \nabla_\cX \tens\id +\id\tens\nabla^L)\nabla_\cX X    
\]
in several parts. We start with
\begin{align*}
 (\tilde\ev\tens\id)  \big(  \nabla_\cX \tens\id
+\id\tens\nabla^L\big)(f_s\tens s) &= -\rho\, E_{21}s-\rho\,E_{12}t , \\
 (\tilde\ev\tens\id)  \big(  \nabla_\cX \tens\id
+\id\tens\nabla^L\big)(f_s\tens t) &= -\rho\, E_{12}s+(2\,E_{12} -\rho\,E_{21}     )t , \\
 (\tilde\ev\tens\id)  \big(  \nabla_\cX \tens\id
+\id\tens\nabla^L\big)(f_t\tens s) &= (2\,E_{21}+\rho\, E_{12}) s  +   \rho\,E_{21}t , \\
 (\tilde\ev\tens\id)  \big(  \nabla_\cX \tens\id
+\id\tens\nabla^L\big)(f_t\tens t) &= \rho\, E_{21}s + \rho\,E_{12}t  \ .
\end{align*}
\begin{align*}
(\tilde\ev\tens\id)   
(\id\tens ( \id\tens\id - \sigma ))  &   ( \nabla_\cX \tens\id +\id\tens\nabla^L)(f_a\tens b\,Y_{ab}   )    \\
&= (\tilde\ev\tens\id)   
(\id\tens ( \id\tens\id - \sigma ))     ( \nabla_\cX \tens\id +\id\tens\nabla^L)(f_a\tens b)  .Y_{ab}  \\
&\quad + (\tilde\ev\tens\id)   
(\id\tens ( \id\tens\id - \sigma ))   (f_a\tens \sigma_L(b\tens\extd Y_{ab} ))   \\
&= (\tilde\ev\tens\id)      ( \nabla_\cX \tens\id +\id\tens\nabla^L)(f_a\tens b)  .Y_{ab}  \\
&\quad + (\tilde\ev\tens\id)   
(\id\tens (  \sigma_L - \id\tens\id  ))   (f_a\tens b\tens c) .[E_c,Y_{ab} ]
\end{align*}
and evaluating this against $X$ gives
\begin{align*}
&= (\tilde\ev\tens\ev)   (   ( \nabla_\cX \tens\id +\id\tens\nabla^L)(f_a\tens b) \tens f_c) .Y_{ab}  X_c \\
&\quad + (\tilde\ev\tens\ev)   
(\id\tens (  \sigma_L - \id\tens\id  ) \tens\id)   (f_a\tens b\tens c\tens f_d) .[E_c,Y_{ab} ]\, X_d.
\end{align*}
Now, $R(X)$ consists of this plus $-\rho\,(Y_{ts}Y_{ss} + Y_{ss} Y_{ts}- Y_{tt}Y_{st}-Y_{st}Y_{tt}) - \rho^2\, ( (Y_{st}+Y_{ts})^2-(Y_{ss}+Y_{tt})^2)$ as already computed for $F(X)$, giving $R(X)$ as stated.  Explicitly,
\[Y_{ss}=[E_{12},X^s]+\rho (E_{21}X^s+ E_{12}X^t),\quad Y_{st}=-\{E_{21},X^s\}-\rho(E_{12}X^s+ E_{21} X^t),\]
\[ Y_{tt}=[E_{21},X^t]-\rho ( E_{21}X^s+E_{12}X^t),\quad Y_{ts}=-\{E_{12},X^t\}-\rho( E_{12} X^s+E_{21}X^t),\]
which one can further compute in terms of the entries of $X^s,X^t$. \end{proof}

If $\rho=0$ then we see that $R(X)$ agrees with contraction against the Ricci curvature tensor (\ref{riccimat}),  bearing in mind that the latter is $-{1/2}$ of the classically normalised one. But for $\rho\ne 0$, we see that the two approaches are a little different in this example. 


\subsection{Quantum geodesic flow equations on $M_2(\C)$}

As before, we write $X=f_s  X^s+ f_t X^t$ but with components now time-dependent, and we set
\[ \kappa={1\over 2}{\rm div}(X)={1\over 2}  \big([E_{12},X^s]+[E_{21},X^t] \big)
\]
along with preferred state $\int={1\over 2}{\rm Tr}$ in view of Proposition~\ref{propM2div}.  The reality condition on vector fields from Theorem~\ref{thmXstar} comes down to 
\[ (X^s)^*=-X^t\]
since $\varsigma=\id$, while the  geodesic velocity equation is
\begin{align*} 
\dot X &=\frac12  \big[ ([E_{12},X^s]+ [E_{21},X^t]),X \big]- (\id\tens X )\nabla X \\
&= \frac12  \big[ ([E_{12},X^s]+ [E_{21},X^t]),X \big] - f_s X(\extd X^s) - f_t X(\extd X^t) - 
(\id\tens X )((\nabla_\cX f_s)X^s)- (\id\tens X )((\nabla_\cX f_t)X^t).
\end{align*}
We have
\begin{align*}
&(\id\tens X )((\nabla_\cX f_s)X^s)+ (\id\tens X )((\nabla_\cX f_t)X^t) \cr
&\quad =  f^s \big(  E_{21} X^sX^s \rho-(2 E_{21} +\rho E_{12})X^sX^t + E_{12} X^t X^s \rho-E_{21} X^t X^t \rho \big) \cr
&\qquad +  f^t  \big(     E_{12} X^sX^s\rho- E_{21} X^s X^t \rho -(2E_{12}-E_{21}\rho) X^t X^s-E_{12} X^tX^t \rho
\big)
\end{align*}
and thus 
\begin{align} \nonumber
\dot X^s
&= \frac12  \big[ ([E_{12},X^s]+ [E_{21},X^t]),X^s \big] -  [E_{12},X^s] X^s  - [E_{21},X^s] X^t  \\  
\label{velmata} &\qquad  - \big(  E_{21} X^sX^s \rho-(2 E_{21} +\rho E_{12})X^sX^t + E_{12} X^t X^s \rho-E_{21} X^t X^t \rho \big), \\
\nonumber  \dot X^t
&= \frac12  \big[ ([E_{12},X^s]+ [E_{21},X^t]),X^t \big] -  [E_{12},X^t] X^s  - [E_{21},X^t] X^t \\ & 
 \label{velmatb}  \qquad - 
 \big(     E_{12} X^sX^s\rho- E_{21} X^s X^t \rho -(2E_{12}-E_{21}\rho) X^t X^s-E_{12} X^tX^t \rho
\big) \ .
\end{align}

We first look at the content of the original auxiliary braid condition in \cite{BegMa:geo} which is sufficient but not necessary for real geodesic evolution. 

\begin{lemma} If $X\in \cX $ is real then $\sigma_{\cX,\cX}(X\tens X)=X\tens X$ holds only if $X=0$.
\end{lemma}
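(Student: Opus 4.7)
The plan is to pin $X$ down by its matrix coefficients in the dual basis, reduce both the reality condition and the auxiliary condition to a closed algebraic system on those coefficients, and close the argument by positivity of the state $\phi$.

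Start by writing $X = f_s X^s + f_t X^t$ with $X^s, X^t \in M_2(\C)$. Since $\phi = \tfrac12 \mathrm{Tr}$ is an untwisted trace on $M_2(\C)$, one has $\varsigma = \id$, and Theorem~\ref{thmXstar}(5) reduces reality to $X^* = X$. Evaluating $X^*(s)$ and $X^*(t)$ via the formulas for $\hat\sigma$ in Proposition~\ref{propM2div} (and the $s \leftrightarrow t$ counterparts for $f_t$, which follow the same pattern as $\sigma_L$) reduces this further to the single matrix condition $X^t = -(X^s)^*$. Set $Y := X^s$; the goal becomes $Y = 0$.

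Next compute $\sigma_{\cX\cX}$ on the basis $\{f_a \tens f_b\}_{a,b \in \{s,t\}}$ by dualising (\ref{prid}) with $\mu = \nu = 0$ through the defining identity of Corollary~\ref{corXreal}. Because $s, t$ are central in the calculus on $M_2(\C)$, $\cX^L$ is a symmetric $A$-bimodule so the $A$-balancing in $\cX^L \tens_A \cX^L$ is trivial and
\[
X \tens X = \sum_{a, b \in \{s,t\}} f_a \tens f_b \cdot X^a X^b.
\]
Matching coefficients in $\sigma_{\cX\cX}(X \tens X) = X \tens X$ across the four basis tensors produces $[X^s, X^t] = 0$ from the two diagonal equations and, once this is imposed, $\rho\bigl((X^s)^2 - (X^t)^2\bigr) = 2 X^s X^t$ from the two off-diagonal ones. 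Substituting $X^t = -Y^*$ and the $*$-preserving constraint $\rho = \imath\alpha$ with $\alpha \in \R$ reduces the system to
\[
[Y, Y^*] = 0, \qquad \imath\alpha\bigl(Y^2 - (Y^*)^2\bigr) + 2 Y Y^* = 0.
\]

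The main obstacle is the final matrix-to-scalar step. Applying $\phi$ to the matrix equation collapses it to $\alpha\,\mathrm{Im}\,\mathrm{Tr}(Y^2) = \|Y\|_{HS}^2 \ge 0$, since $\rho$ imaginary renders the first term hermitian. Normality $[Y, Y^*] = 0$ provides a unitary diagonalisation of $Y$ with eigenvalues $\lambda_i \in \C$, and the full matrix equation reduces per-eigenvalue to the real quadratic $(\mathrm{Re}\,\lambda_i - \alpha\,\mathrm{Im}\,\lambda_i)^2 + (1 - \alpha^2)(\mathrm{Im}\,\lambda_i)^2 = 0$. Positivity of this quadratic form, combined with the nondegeneracy of $\phi$, forces each $\lambda_i = 0$, whence $Y = 0$ and $X = 0$.
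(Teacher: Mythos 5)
Your setup (writing $X=f_sX^s+f_tX^t$ and reducing reality to $X^t=-(X^s)^*$ via $\varsigma=\id$) agrees with the paper. The divergence comes at the coefficient-extraction step, and it is fatal to your conclusion. The paper reads off from $\sigma_{\cX\cX}(X\tens X)-X\tens X=0$ an entry $\rho(X^sX^t+X^tX^s)=0$ and a second entry $-(X^sX^t+X^tX^s)+\rho(X^sX^s+X^tX^t)=0$, so that the \emph{anticommutator} $X^sX^t+X^tX^s$ vanishes whether or not $\rho=0$; substituting $X^t=-(X^s)^*$ turns this into $X^s(X^s)^*+(X^s)^*X^s=0$, a sum of two positive semidefinite matrices, whence $X^s=0$ by positivity alone --- no diagonalisation, no case analysis in $\rho$. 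You instead obtain only the commutator $[X^s,X^t]=0$ from the diagonal entries together with $\rho((X^s)^2-(X^t)^2)=2X^sX^t$ off the diagonal, which is a strictly weaker system.

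With your system the final step does not go through. The form $(\mathrm{Re}\,\lambda-\alpha\,\mathrm{Im}\,\lambda)^2+(1-\alpha^2)(\mathrm{Im}\,\lambda)^2$ is positive definite only for $|\alpha|<1$; nothing in the $*$-preserving condition restricts $\rho=\imath\alpha$ in this way, and the connection the paper actually runs with has $\rho=\imath$, i.e.\ $\alpha=1$, where the form degenerates to $(\mathrm{Re}\,\lambda-\mathrm{Im}\,\lambda)^2$. Concretely, $Y=(1+\imath)I_2$ is normal and satisfies $\imath\big(Y^2-(Y^*)^2\big)+2YY^*=-4I_2+4I_2=0$, so every equation in your reduction is satisfied by a nonzero $Y$ and your argument cannot conclude $Y=0$. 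The appeal to ``nondegeneracy of $\phi$'' does no work here either: once you argue eigenvalue by eigenvalue the state has dropped out entirely. Note also that the two combinations you extracted, $[X^s,X^t]$ and $\{X^s,X^t\}-\rho((X^s)^2-(X^t)^2)$, are precisely the expressions entering the strictly weaker improved auxiliary condition (\ref{auxnewmat}); you should therefore recheck your dualisation of $\sigma_L$ to confirm you have imposed the full condition $\sigma_{\cX\cX}(X\tens X)=X\tens X$ rather than a consequence of it, since it is the lost anticommutator entry that carries the positivity on which the paper's proof rests. As written, the proof does not establish the lemma.
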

\begin{proof}  Looking at the coefficients of $\sigma_{\cX,\cX}(X\tens X)-X\tens X=0$ we get one entry $\rho(X^s X^t+X^t X^s)=0$ so if $\rho\neq 0$ then $X^s X^t+X^t X^s=0$. There is another entry $-(X^s X^t+X^t X^s)+\rho(X^s X^s+X^t X^t)=0$, so if $\rho=0$ we still get  $X^s X^t+X^t X^s=0$. In the case of real $X$ this means $X^s{}^*X^s+X^sX^s{}^*=0$ which requires $X^s=X^t=0$. \end{proof}

Thus, this condition is too strong. Instead, we proceed with $X$ real but the minimal `improved auxiliary condition' on it such that the flow remains real. This is obtained by applying $*$ to the equation for $\dot X^s$ and requiring to get the equation for $\dot X^t$, and comes down to
\begin{equation}\label{auxnewmat}\{E_{12},\{X^s,X^t\}-\rho((X^s){}^2-(X^t){}^2)\}+\rho \left([E_{21},[X^s,X^t]] \}\right)=0\end{equation}
on remembering that $\rho$ is imaginary.  If $\rho=0$ then we need $E_{12}$ to anticommute with $\{X^s,X^s{}^*\}$, which is a positive operator and hence must be real and symmetric. One can show that this requires $\{X^s,X^s{}^*\}=0$, so we are back with the original auxiliary braid condition case. Therefore, real solutions in the sense $X^s{}^*=-X^t$ at all times $t$ can exist only with $\rho\ne 0$.

After solving for $X$, we then solve the amplitude flow equation $\nabla_E\psi=0$ for $\psi(t)\in M_2(\C)$, which amounts to
\begin{equation}\label{psimat} \dot\psi=-[E_{12},\psi]X^s-[E_{21},\psi]X^t-\psi\kappa.\end{equation}

\subsection{Quantum geodesic flows for $\rho=\imath$} The choice $\rho=\imath$, although flat, seems to be a natural choice after we excluded $\rho=0$ and we solve this here. We first solve the improved auxiliary equation (\ref{auxnewmat}) with a 4-parameter solution 
\[ X^s=\begin{pmatrix}e^{\pi\imath\over 4}(d-a) & c+\imath b\\ b+\imath c & e^{\pi\imath\over 4}(d+a)\end{pmatrix};\quad  a,b,c,d\in \R. \]
We now let these parameters vary in time and find that (\ref{velmata}) is 
\[ \dot c=0,\quad \dot d=0,\quad \dot a=2 b d,\quad \dot b=-2 a d,\]
which is simple harmonic motion. Hence, we have a 4-parameter geodesic velocity field
\[ X^s=\begin{pmatrix}e^{\pi\imath\over 4}(\delta-(\alpha \cos(2\delta t)+\beta\sin(2 \delta t))& \gamma+\imath (\beta\cos(2\delta t)-\alpha\sin(2\delta t))\\ (\beta\cos(2\delta t)-\alpha\sin(2\delta t))+\imath \gamma & e^{\pi\imath\over 4}(\delta+(\alpha \cos(2\delta t)+\beta\sin(2 \delta t)))\end{pmatrix},\quad X^t=-X^s{}^*\]
for initial values $\alpha,\beta,\gamma,\delta$ of $a,b,c,d$ respectively.

\begin{figure}
\[ \includegraphics[scale=1.1]{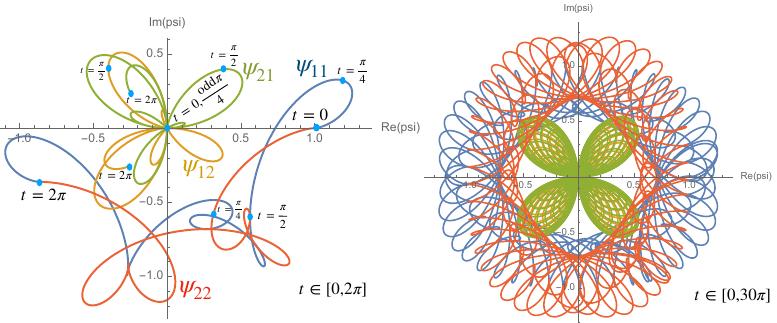}\]
\caption{Quantum geodesic on $M_2(\C)$ with its flat connection $\rho=\imath$ and a geodesic vector field given by $\alpha=\gamma=\delta=1, \beta=0$. We plot the matrix entries on the complex plane of the resulting amplitude flow $\psi(t)\in M_2(\C)$ with initial  $\psi(0)=\id$. The outer curves are the diagonal entries and the inner ones off-diagonal. \label{matgeo}}
\end{figure} 

Relative to this, we have to solve the amplitude flow equation (\ref{psimat}) where
\[ \kappa=  \begin{pmatrix}\beta\cos(2\delta t)-\alpha\sin(2\delta t)&e^{\pi\imath\over 4}(\alpha \cos(2\delta t)+\beta\sin(2 \delta t))\\
e^{-{\pi\imath\over 4}}(\alpha \cos(2\delta t)+\beta\sin(2 \delta t)) & -(\beta\cos(2\delta t)-\alpha\sin(2\delta t))\end{pmatrix}.\]
A numerical solution for $\alpha=\gamma=\delta=1, \beta=0$ and $\psi(0)=\id$ the identity matrix is shown in Figure~\ref{matgeo}. One can check to within numerical accuracy that the off-diagonal entries of $\psi$ vanish at $t=n\pi/4$ for all odd $n$ in the range and at these points ${\rm Im}(\psi_{11})={\rm Re}(\psi_{22})$. Meanwhile at $t=n\pi/2$ for all $n$ in range, one has $\psi_{11}=\psi_{22}$, $\psi_{12}=(-1)^n\bar\psi_{21}$ and $\psi_{12}=r_n e^{(-1)^n{\pi\imath\over 4}}$ for real $r_n$.  We have marked $t=\pi/4$ and $t=\pi/2$ as examples. Looking from a coarser perspective out to large $t$, we also see that the diagonal entries of $\psi$ precess in an approximate circle while the off-diagonal entries remain near to 0 and repeatedly return to it. This reflects the initial starting point. One can further check that while the squared absolute values of the 4 entries of $\psi$ clearly vary in time (as the square of the distance from the origin in the complex plane), their sum remains constant at 2, in line with our probabilistic interpretation with respect to $\int={1\over 2}{\rm Tr}$.

\section{Quantum geodesics on the fuzzy sphere}\label{secfuz}

The fuzzy sphere or coadjoint quantisation as an orbit in $su_2^*$ is the algebra
\[ [x_i,x_j]=2\imath\lambda_p \epsilon_{ijk}x_k,\quad \sum_i x_i^2= 1-\lambda_p^2,\]
where $0\le \lambda_p<1$ is a real deformation parameter. We use the  3D calculus recently introduced in \cite[Example~1.46]{BegMa} 
with central basis $s^i$, $i=1,2,3$ with 
\[  \extd x_i= \epsilon_{ijk}x_{j}s^k.\]
The calculus is inner (in degree 1 only) with 
\[ \theta= \tfrac{1}{2\imath \lambda_p}x_i s^i=\tfrac{1}{(2\imath\lambda_p)^2}x_i\extd x_i=-\tfrac{1}{(2\imath\lambda_p)^2}(\extd x_i) x_i\]
so that the partial derivatives are the `orbital angular momentum' derivations
\[ \del_i f= \tfrac{1}{2\imath \lambda_p}[x_i, f],\quad \del_i x_j=\eps_{ijk}x_k,\quad \eps_{ijk} \del_i\del_j=\del_k. \]
The natural exterior algebra is
\[  s^i \wedge s^j + s^j \wedge s^i =0,\quad \extd s^i =-\tfrac{1}{2}\epsilon_{ijk}s^j \wedge s^k\]
but note that this is no longer inner in higher degree by $\theta$. The $*$-structure is $x_i,s^i$ self adjoint and the canonical lift from 2-forms is $i(s^i\wedge s^j)={1\over 2}(s^i\tens s^j-s^j\tens s^i)$ as classically. 

A general quantum symmetric metric has the form $g=g_{ij}s^i\tens s^j$ for $g_{ij}$ a positive real symmetric matrix and a left connection takes the form (dropping the $1\over 2$ factor compared to the definition of $\Gamma$ in \cite{LirMa}), 
\[ \nabla^L s^i= -  \Gamma^i{}_{ jk } s^j\tens s^k\]
for some coefficients $\Gamma^i{}_{jk}$ which we assume for simplicity to be constants (multiples of the identity $1\in A$). We have a bimodule connection with $\sigma_L$ just the flip on the basis, and for the connection to be $*$-preserving we then need  $\Gamma^i{}_{jk}$ to be real. We do not limit ourselves to a QLC this was found in \cite{LirMa} to be unique among such connections, namely
\begin{equation}\label{s2qlc} \nabla^L s^i= - g^{il} (\eps_{lkm}g_{mj}+{{\rm Tr}(g)\over 2}\eps_{ljk})s^j\tens s^k. 
\end{equation}
 The torsion for a general connection in our class is
\[ T_\nabla(s^i)=-{1\over 2}T^i{}_{jk}s^j\wedge s^k;\quad T^i{}_{jk}=\Gamma^i{}_{jk}-\Gamma^i{}_{kj}-\eps_{ijk}\]
which can be shown to vanishes for the QLC. The Riemann curvature for a general connection has the form
\[ R_{\nabla^L}(s^i)=\rho^i{}_{jk}\eps_{jmn}s^m\wedge s^n\tens s^k,\quad \rho^i{}_{jk}=\tfrac{1}{2}(\Gamma^{i}{}_{jk}-\eps_{jmn}\del_m\Gamma^{i}{}_{nk}-\eps_{jmn}\Gamma^{i}{}_{ml}\Gamma^{l}{}_{nk}).\]
and the Ricci curvature the form
\begin{equation}\label{Rmn} R_{mn}=\rho^i{}_{jn}\eps_{jim}={1\over 2}\left(\Gamma^i{}_{jn}\eps_{imj}+\del_m\Gamma^i{}_{in}-\del_i\Gamma^i{}_{mn}+\Gamma^i{}_{mj}\Gamma^j{}_{in}-\Gamma^i{}_{ij}\Gamma^j{}_{mn}\right),\end{equation}
albeit in our constant case we do not need the derivative terms. Its value for the QLC is computed in \cite{LirMa} 
which then gives the scalar curvature in this case as
\[ S= \tfrac{1}{2}({\rm Tr}(g^2)-\tfrac{1}{2}{\rm Tr}(g)^2)/\det(g). \]

If we introduce the (central) basis $f_i$ for left vector fields which is dual to the $s^i$ then the right connection dual to the previous left connection is
\[
 \nabla_\cX f_k= \Gamma^i{}_{ jk } f_i\tens s^j
\]
with $\sigma_\cX$ the flip among the $f_i$ and $s^j$. This implies on a left vector field $X=f_k X^k$, 
\[
\nabla_\cX X = \Gamma^i{}_{ jk }  f_i\tens s^jX^k + f_i\tens 
\del_j X^i s^j =D_j X^i f_i\tens s^j,\]
where
\[ D_j X^i=   \del_j X^i + \Gamma^i{}_{ jk }  X^k.   
\]

We also need a natural state and here we use $\int:A\to \C$ defined by an expansion in terms of noncommutative spherical harmonics\cite{ArgMa} (as the $SU_2$-invariant component). This is known to be a trace and was used recently in the construction of the Dirac operator on the fuzzy sphere\cite{LirMa2}. 

\begin{proposition}\label{propfuzdiv} The geometric divergence and the $\int$-divergence agree on the fuzzy sphere if $\Gamma^i{}_{ij}=0$ for all $j$, which includes the case of the QLC. In this case ${\rm div}(f_kX^k)=\del_k X^k$ and $f_k^*=f_k$. 
\end{proposition}
\begin{proof} The inverse of $\sigma_\cX$ is also the flip on the basis and gives the left connection $\hat\nabla$ and hence
$\mathrm{div}_{\hat\nabla}(f_k) =\Gamma^i{}_{jk}\delta_{ij}=\Gamma^i{}_{ik}$, which   vanishes for the QLC read off from (\ref{s2qlc}). Then ${\rm div}_{\hat\nabla}(f_ka)={\rm div}_{\hat\nabla}(af_k)=\ev(\extd a\tens f_k)=\del_ka$ for any vector field $f_ka$. Hence, from Proposition~\ref{propdiveq} for compatibility of the two types of divergence, we need 
\[ \int \del_j a=\int [ \tfrac{1}{2\imath \lambda_p}x_j,a]=0\]
for any $a$, which holds as $\int$ is a trace. Since $s^j{}^*=s^j$, the $*$-operation in Theorem~\ref{thmXstar} is then as stated. \end{proof}

We proceed with connections where $\Gamma$ obeys the condition in Proposition~\ref{propfuzdiv} so that we can use Theorem~\ref{thmXstar}. 

\begin{proposition}\label{propRfuz} On $X=f_iX^i$ for $X^i\in \C_\lambda[S^2]$, the kinetic form and the quadratic Ricci form are 
\begin{align*} F(X)=(D_j X^i) (D_i X^j),\quad 
R(X)&=-2(R_{ij}+\eps_{mni}\Gamma^m{}_{nj})X^jX^i-T^k{}_{ij}(D_k X^i)X^j.
 \end{align*} 
\end{proposition}
\begin{proof} Clearly the two halves of $F(X)$ are each half of 
\[ \tilde\ev(\id\tens\ev\tens\id)(\nabla_\cX X \tens \nabla_\cX X)=(D_j X^i)(D_n X^m) \tilde\ev(\id\tens\ev\tens\id)(f_i\tens s^j\tens f_m\tens s^n)=(D_j X^i)(D_i X^j),\]
\[ \ev(\id\tens\tilde\ev\tens\id)(\hat\nabla X \tens \hat\nabla X)=(D_j X^i)(D_n X^m) \ev(\id\tens\tilde\ev\tens\id)(s^j\tens f_i\tens  s^n\tens f_m)=(D_j X^i)(D_i X^j),
\]
where ${\hat\nabla} X=\sigma^{-1}_\cX\nabla_\cX X=D_j X^i s^j\tens f_i$ as the $s^i,f_j$ are both central so we can pull all coefficients to the left and $\sigma_\cX$ is just the flip on the basis. 

Being equal, these do not contribute to $R(X)$. The first term of $R(X)$ also vanishes as
\begin{align*}\ev(\doublenabla(\tilde\ev)(\nabla_\cX X)\tens X)=\ev(\doublenabla(\tilde\ev)(f_i\tens s^j)D_jX^i\tens f_l)X^l=0
\end{align*}
using that $\doublenabla(\tilde\ev):\cX\tens\Omega^1\to \Omega^1$ is a right module map and that, from  (\ref{dnablaev}), 
\begin{align*} \doublenabla(\tilde\ev)(f_i\tens s^j)&=-(\tilde\ev\tens\id)(\id\tens\sigma)(\Gamma^m{}_{ni}f_m\tens s^n\tens s^j-f_i \tens\Gamma^j{}_{mn}s^m\tens s^n)\\
&=-(\tilde\ev\tens\id)(\Gamma^m{}_{ni}f_m\tens s^j\tens s^n-f_i \tens\Gamma^j{}_{mn}s^n\tens s^m)\\
&=-\Gamma^j{}_{ni} s^n+ \Gamma^j{}_{mi}s^m=0.
\end{align*}

It remains to calculate the middle term,
\begin{align*} (\tilde \ev\tens\ev)&\left((\id\tens(\id\tens\id-\sigma))(\nabla_\cX\tens\id+\id\tens\nabla^L)   (D_jX^i f_i\tens s^j)\tens f_m X^m\right)\\
&=(D_k(D_l X)^i-D_jX^i\Gamma^j{}_{kl})(\tilde \ev\tens\ev)\left((\id\tens(\id\tens\id-\sigma))(f_i\tens s^k\tens s^l)\tens f_m X^m\right)\\
&=(D_k(D_l X)^i-D_jX^i\Gamma^j{}_{kl}) (\tilde \ev\tens\ev)\left(f_i\tens (s^k\tens s^l- s^l\tens s^k)\tens f_m \right)X^m\\
&=(D_k(D_l X)^i-D_jX^i\Gamma^j{}_{kl})(\delta_{ik}\delta_{ml}-\delta_{il}\delta_{mk})X^m\\
&=([D_i,D_m]X^i-(D_jX^i)(T^j{}_{im}+\eps_{jim}))X^m\\
&=\left([\del_i,\del_m] X^i+ (\Gamma^i{}_{ij}\Gamma^j{}_{mk}-\Gamma^i{}_{mj}\Gamma^j{}_{ik})X^k\right)X^m-(D_jX^i)(T^j{}_{im}+\eps_{jim})X^m\\&=2 \eps_{imj}\Gamma^i{}_{jk}X^kX^m- T^j{}_{im}(D_j X^i)X^m +\left(-\Gamma^i{}_{jk}\eps_{imj}+  \Gamma^j{}_{mk}\Gamma^i{}_{ij} - \Gamma^j{}_{ik}\Gamma^i{}_{mj}    \right)X^kX^m
\end{align*}
which we recognise as stated. In the first five lines, $D_k$ acts like a covariant derivative only on the upper $X^i$ index, e.g. when applied to $(D_l X)^i$. After that we expand out in terms of $\Gamma$ and use $[\del_i,\del_m]=\eps_{imj}\del_j$ which we cancel with a part of $D_jX^i\eps_{jim}$, to obtain the final expression. 
\end{proof}
We see that the quadratic form point of view coming from quantum geodesic flows in this example suggests a modified Ricci tensor 
\[ R^{quad}_{ij}:=R_{ij}+\eps_{mni}\Gamma^m{}_{nj}\]
differing from the existing `lift and contract'  approach.  Actually the effect of this extra term is merely to reverse the sign of the first term in (\ref{Rmn}) for the corresponding expression for $R^{quad}$.

Next, we study the geodesic velocity equations. Given the above, and Theorem~\ref{thmXstar}, we set  
\[
\kappa=\tfrac12\,\mathrm{div}_{\hat\nabla}(X) =\tfrac12\, \del_k X^k,\quad X^k{}^*=X^k\]
for the divergence of a vector field $X=f_k X^k\in \cX $ and its reality property (since $\varsigma=\id$). One can check that then $(\del_jX^i)^*=\del_jX^i$ also.  For the velocity field equation,  we first calculate
\[
(\id\tens\ev)(\nabla X\tens X) =  \Gamma^i{}_{ jk }  f_i X^k X^j + f_i 
(\del_jX^i)  X^j,\quad [\kappa,X]= \tfrac12\, [\del_j X^j,X^i]f_i. 
\] 
Then the geodesic velocity equations in the form in equation (\ref{coveleq}) become
\begin{align}  \label{velfuz}
\dot X^i = \tfrac12\, [\del_j X^j ,X^i] - 
 \Gamma^i{}_{ jk }   X^k X^j - 
(\del_jX^i)  X^j.
\end{align}

The auxiliary braid condition  $\sigma_{\cX\cX}(X\tens X)=X\tens X$ in \cite{BegMa:geo} comes down to 
\begin{equation}\label{auxfuz} [X^i,X^j]=0,\end{equation}
while the most general `improved auxiliary condition' needed to maintain reality of flow under the geodesic velocity equations is
\begin{equation}\label{auxnewfuz}  \del_j[X^i,X^j]=(\Gamma^i{}_{jk}-\Gamma^i{}_{kj})X^jX^k=(T^i{}_{jk}+\eps_{ijk})X^jX^k.\end{equation}
This is obtained by applying $*$ to (\ref{velfuz}) and comparing. Remember that $\Gamma$ is assumed real and constant-valued (a multiple of the identity in the algebra) for a $*$-preserving connection in our context. We therefore solve both the geodesic velocity equation and (\ref{auxnewfuz}). If we use the quantum Levi-Civita connection then the torsion is zero. 

After this, we have to solve for $e\in A\tens C^\infty(\R)$ with respect to a chosen geodesic velocity field. Thus, we have to solve $\dot e + X(\extd e)+ e \kappa=0$, which comes out as the amplitude flow equation
\begin{equation}\label{flowfuz} \dot e=- X^i\del_i e- {e\over 2}\del_i X^i.\end{equation}

\subsection{Quantum geodesic flow with $X^i(t)\in \R 1$ i.e. constant on the fuzzy sphere} 

The geodesic velocity equation (\ref{velfuz}) for the QLC and for constant coefficients $X^i(t)\in \R 1$ becomes 
\[ \dot X^i=-\Gamma^i{}_{jk}X^k X^j=-g^{il}g_{mj}\eps_{lkm} X^jX^k,\]
while both the auxiliary braid condition (\ref{auxfuz}) and the improved one (\ref{auxnewfuz}) hold automatically. The latter says that we can consistently keep $X^i$ real. In the diagonal case $g={\rm diag}(\lambda_1,\lambda_2,\lambda_3)$, this is 
\[ \dot X^1=\mu_1X^2 X^3,\quad  \dot X^2=\mu_2X^1 X^3,\quad  \dot X^3=\mu_3 X^1 X^2;
\quad \mu_1={\lambda_2-\lambda_3\over \lambda_1},\quad \mu_2={\lambda_3-\lambda_1\over \lambda_2},\quad \mu_3={\lambda_1-\lambda_2\over \lambda_3}, \]
where $\sum \mu_i+\mu_1\mu_2\mu_3=0$ and the $\mu_i$ depend on the $\lambda_i$ up to an overall scale, i.e. on $(\lambda_i)\in \R \Bbb P^2$. The velocity equation has solutions in terms of Jacobi elliptic sn and cn functions. For example,  if $\mu_1<0< \mu_2$     then 
\[X^1(t)=-c_1 \sqrt{-\mu_1 } \text{sn}\left( c_2\, t| \mu \right),\quad X^2(t)= c_1 \sqrt{\mu_2}\text{cn}\left(  c_2\, t| \mu \right),\quad X^3(t)=   c_1\sqrt{{\mu_3\over\mu }}\sqrt{1-\mu\, \text{sn}^2\left( c_2\, t|\mu \right)}\]
are real solutions, where we assume that the ellipticity parameter
\[ \mu=-\mu_1\mu_2\mu_3 \frac{ c_1^2}{ c_2^2} \]
is nonzero (which is  if and only if the $\lambda_i$ are all distinct). Here $c_1,c_2>0$ are parameters and $t$ is in a certain interval containing $0$.
Note that if we chose the $\mu_i$ then the corresponding metric up to an overall normalisation is 
\[ g={\rm diag}(1+\mu_2,1-\mu_1, 1+\mu_1 \mu_2)\]
so this is positive only when $\mu_1<1, \mu_2>-1,\mu_1\mu_2>-1$. Figure~\ref{fuzgeo} gives an example like this where $\mu_1=-\tfrac{1}{2}$, $\mu_2=1$, so that $\mu_3=-1$ and $g={\rm diag}(4,3,1)$ is the metric up to normalisation. We take $c_1=c_2=1$ and we have $\mu=-\tfrac{1}{2}$, so 
\[X^1=-\tfrac{1}{\sqrt{2}}\, \text{sn}(t|-\tfrac{1}{2}),\quad X^2=\text{cn}(t|-\tfrac{1}{2}),\quad X^3=\sqrt{2+\text{sn}^2(t|-\tfrac{1}{2})},\]
which is  real and valid for all $t$, being periodic with period approximately $5.66$ and initial value $\vec X(0)=(0,1,\sqrt{2})$. 

\begin{figure}
\[ \includegraphics[scale=.95]{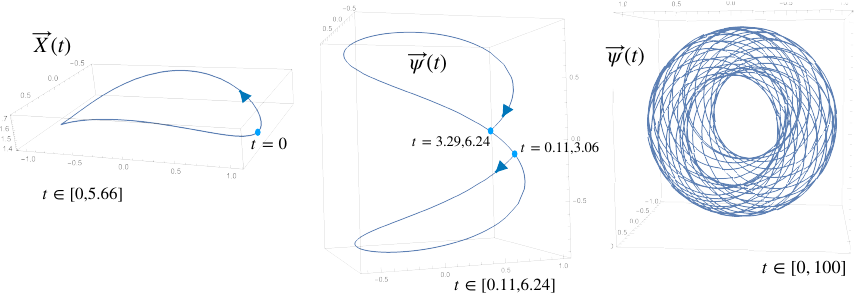}\]
\caption{Quantum geodesic on the fuzzy sphere with metric $g={\rm diag}(4,3,1)$. On the left is an example of a time dependent geodesic velocity field $X=f_iX^i(t)$ for this metric starting at $\vec X(0)=(0,1,\sqrt{2})$. On the right is the flow this generates for a function of the form $e=\psi^i(t)x_i$ starting at $\vec\psi(0)=(1,0,0)$.  \label{fuzgeo}}
\end{figure} 

Next, we integrate (\ref{flowfuz}) to find the amplitude flow for this $X$. We restrict  attention to $e=\psi^ix_i\in su_3\tens C^\infty(\R)$ and then find that motion stays in this subspace of the fuzzy sphere. Hence,  we are effectively integrating a time-varying infinitesimal rotation given by
\[ \dot\psi^i =\eps_{ijk}  X^j\psi^k,\]
which is easily solved numerically as shown also in Figure~\ref{fuzgeo} starting with $\vec\psi=(1,0,0)$. Unitarity of the evolution means $\int e^*e=\int \bar\psi^i\psi_jx_ix_j=\vec{\bar\psi}\cdot \vec\psi(1-\lambda_p^2)$ so that  a normalised $\vec\psi$ stays normalised. Hence, motion here is necessarily on the unit sphere in field-space. Clearly also, the curve crosses itself multiply, with the first two self-crossings as shown. When we look out to large $t$, we see (viewed from the side) two disks on the sphere where the curve does not enter. Other $g$, including with Lorentzian signature, generate a broadly similar picture. In our picture the restricted Hilbert space is 3-dimensional and the vector $|\psi\rangle=\vec\psi$ in it evolves in time $t$ according to this quantum geodesic amplitude flow. 

\subsection{Quantum geodesic flows with the round metric and more general $X^i$} 

At the other extreme, we take the `round metric' $g_{ij}=\delta_{ij}$. Then 
\[ \Gamma^i{}_{jk}={1\over 2}\eps_{ijk},\quad R_{mn}=-\tfrac{1}{4}\delta_{mn},\quad S=-\tfrac{3}{4}\]
and our improved auxiliary equation (\ref{auxnewfuz}) and the consequently equivlalent form of (\ref{velfuz}) are
\[\del_j[X^i,X^j]=\eps_{ijk}X^jX^k,\quad \dot X^i=-{1\over 2} \{X^j,\del_j X^i\}.\]

The simplest solution is again $X^i\in \R1$ as before, but this time, as $\del_jX^i=0$, we have that the $X^i$ are also constant in time. So the velocity field is any fixed $\vec X\in \R^3$. In this case, if we look for flows of the form  $e=\psi^ix_i$ as we did before, we need $\dot{\vec \psi}=-\vec X\times \vec\psi$, which evolves over time to a rotation about an axis along $\vec X$. So the geodesic flows are circles in the space of states of this form, around any fixed axis $\vec X$. 

Looking for other real solutions in the full (non-reduced) fuzzy sphere is beyond our scope here as it leaves the algebraic setting and would require a completion. For example, if we try solutions of the constant plus linear form $X^i=X^{ij}x_j+ f^i1$ with a real matrix $X^{ij}$ and a real vector $f^i$, then the improved auxiliary condition becomes
\begin{equation}\label{auxfuzmat} ({\rm Tr}(X) X-X^2)^{ij}={1\over 2}\eps_{ikl}X^{km}X^{ln}\eps_{mnj},\end{equation}
which has solutions for $X^{ij}$, the largest part of the moduli space being 5-dimensional. For example, if $X^{22}\ne 0$ then  $X^{12},X^{21},X^{23},X^{32}$ are free and
\[ X=\begin{pmatrix} {X^{12}X^{21}\over X^{22}} & \scriptstyle X^{12} & {X^{12}X^{23}\over X^{22}}\\
\scriptstyle X^{21} & \scriptstyle X^{22} &\scriptstyle X^{23}\\ 
{X^{21}X^{32}\over X^{22}} &\scriptstyle X^{32} & {X^{23}X^{32}\over X^{22}}\end{pmatrix}.\]
In fact both sides then vanish separately as well. However, the geodesic velocity equation becomes
\[ \dot X^{ij}x_j+\dot f^i1=-{1\over 2}X^{ja}X^{ib}\eps_{jbc}\{x_a,x_c\}+ X^{ik}f^l\eps_{klj}x_j, \]
which does not have generic solutions at the level of the algebra $\C_\lambda[S^2]$. 

On the other hand, the quantum geometry also makes sense on the reduced fuzzy sphere where $\lambda_p=1/n$ and we quotient by the kernel of the $n$-dimensional representation. For example, $n=2$ reduces to the algebra of $2\times 2$ matrices, but now with a 3D calculus not the one of the Section~\ref{secmat}. In this case, our algebra has the additional Pauli matrix relations. 
\[   x_ix_j={1\over 4}\delta_{ij}+{\imath\over 2} \eps_{ijk}x_k .    \]
Putting this in, the geodesic velocity equation becomes
\begin{equation}\label{velfuzmat} \dot X= X\times \vec f,\quad \dot{\vec f}={1\over 2}X\cdot \vec \kappa;\quad \kappa^k={1\over 2}X^{ij}\eps_{ijk}\end{equation}
where $\times$ is the cross product of the 2nd index of $X$ with the vector $\vec f$ and $\cdot$ is the dot product of the same with $\kappa=\del_iX^i/2$ viewed as a vector in $\R^3$. 

Next, given any $X$ obeying (\ref{auxfuzmat})--(\ref{velfuzmat}), we write $e=\psi^ix_i+{\phi\over 2}1$ and solve the amplitude flow equations for the vector plus scalar,
\begin{align*} {\dot\phi\over 2}+\dot\psi^ix_i&=-\psi^k(X^{ij}x_j+f^i)\del_i x_k-\psi^j\kappa^l x_jx_l-{\phi\over 2}\kappa^mx_m\\
&=-(X^{ij}\psi^k \eps_{ikl}+ \psi^j\kappa^l)x_j x_l-\psi^k f^i\eps_{ikm}x_m-{\phi\over 2}\kappa^mx_m\\
&=-(X^{ij}\psi^k \eps_{ikl}+\psi^j\kappa^l)({1\over 4}\delta_{jl}+{\imath\over 2}\eps_{jlm}x_m)-\psi^k f^i\eps_{ikm}x_m-{\phi\over 2}\kappa^mx_m\\\
&=-{\imath\over 2}X^{ij}\psi^k (\delta_{im}\delta_{kj}-\delta_{ij}\delta_{km})x_m+{\kappa\cdot \psi \over 4}+{\imath\over 2}(\kappa\times\psi)^m x_m- (f\times\psi)^mx_m-{\phi\over 2}\kappa^mx_m\end{align*}
which gives us 
\[ \dot{\vec\psi}=-{\imath\over 2}(X-{\rm Tr}(X))\vec\psi+({\imath\over 2}\vec\kappa-\vec f)\times\vec\psi-{{\phi\over 2}\vec\kappa},\quad\dot\phi={\kappa\cdot\psi\over 2}\]
with solution for the combined 4-vector
\[ \begin{pmatrix}\vec\psi(t)\\ \phi(t)\end{pmatrix}=Pe^{-\imath\int_0^t  H(s)\extd s} \begin{pmatrix}\vec\psi(0)\\ \phi(0)\end{pmatrix},\quad 
H(t)={1\over 2}\begin{pmatrix} X-{\rm Tr}(X)-(\vec\kappa +2\imath \vec f)\times & -\imath {\vec\kappa}\\ \imath {\vec\kappa} & 0\end{pmatrix}.\]
of the equation $\imath{\extd\over\extd t}(\vec\psi,\phi)=H (\vec\psi,\phi)$ for $H$ as stated. The latter is generically time-dependent and $P$ denotes a (time)-ordered exponential. Note that $x_i,{1\over 2}$ all have the same norm with respect to the inner product defined by $\int$, being a certain multiple of the matrix trace from the point of view of the reduced fuzzy sphere algebra $A$ as a matrix algebra. 

For the above 5-parameter moduli of solutions of (\ref{auxfuzmat}) one has 
\[ \vec\kappa={1\over 2}\left(X^{23}-X^{32},\frac{X^{21} X^{32}-X^{12} X^{23}}{ X^{22}},X^{12}-X^{21}\right),\quad X\cdot\vec\kappa=0\]
so $\vec f$ is a constant vector, and in that case  the rows of $X$ undergo a uniform rotation about the $\vec f$ axis. Then 
\[H={1\over 2} \begin{pmatrix}
- \frac{(X^{22})^2+X^{23} X^{32}}{X^{22}} &   {X^{12}+X^{21}\over 2} + 2\imath f^3& \frac{X^{12} X^{23}+X^{21}X^{32}}{X^{22}} -2\imath f^2&-\imath\kappa^1\\
 {X^{12}+ X^{21}\over 2}-2\imath f^3 & -\frac{X^{12} X^{21}+X^{23} X^{32}}{X^{22}} &   {X^{23}+X^{32}\over 2} +2 \imath f^1&-\imath\kappa^2\\
 \frac{X^{12}X^{23}+X^{21} X^{32}}{X^{22}}  +2\imath f^2&  {X^{23}+X^{32}\over 2} -2\imath f^1& -\frac{X^{12} X^{21}+(X^{22})^2}{X^{22}}& -\imath\kappa^3\\
\imath\kappa^1 & \imath\kappa^2 & \imath\kappa^3
\end{pmatrix}.\]
This is time dependent if $\vec f\ne 0$ (since then $X$ is then time dependent) but some numerical solution are shown in Figure~\ref{fuzmatgeo} taking, without loss of generality, $\vec f$ along the $z$-axis. Each row of the matrix $X$ evolves as a circle about this axis,  as shown for some random initial values. For the amplitude flow we take initial values $\vec\psi(0)=(1,0,0)$ and $\phi(0)$, and plot the real part of the former. The imaginary part is similar. One can also plot $\phi(t)$ and check to within numerical accuracy that $|\phi|^2+\overline{\vec\psi}\cdot\vec\psi=1$,  a constant of motion.

\begin{figure}
\[ \includegraphics[scale=.9]{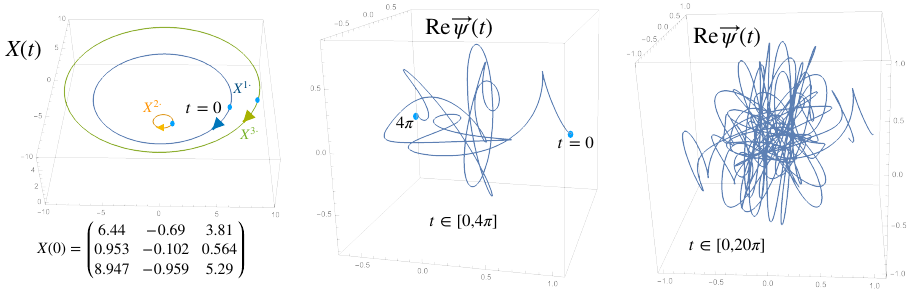}\]
\caption{Quantum geodesic on reduced $2\times 2$ matrix fuzzy sphere with metric round metric $g={\rm diag}(1,1,1)$ and $X^i=X^{ij}(t)x_j+f^i1$ with $\vec f=(0,0,1)$ and a random initial $X(0)$. We show the amplitude flow for $e=\psi^i(t)x_i+{\phi(t)\over 2}1$ starting at $\vec\psi(0)=(1,0,0)$ and $\phi(0)=0$.  \label{fuzmatgeo}}
\end{figure} 

\section{Quantum geodesics on the $q$-sphere}\label{secqsph}

The standard $q$-sphere is  based on the $q$-Hopf fibration and hence a $*$-subalgebra of $\C_q[SU_2]$, which we write as generated by $z,x$ with relations 
\[ zx=q^2 xz, \quad z^* x=q^{-2}x z^*,\quad zz^*=q^4z^*z+q^2(1-q^2)x,\quad z^*z=x(1-x)\]
in the conventions of \cite[Lemma~2.34]{BegMa}. Its 2D differential calculus is inherited from the 3D calculus\cite{Wor} on $\C_q[SU_2]$ and can be given in terms of $\extd z,\extd z^*,\extd x$ and a relation between them (here $\Omega^1$ is not free but a rank 2 projective module). Its holomorphic/antiholomorphic decomposition $\Omega^1=\Omega^{0,1}\oplus\Omega^{1,0}$  (in fact a double complex) was obtained in \cite{Ma:spi} as an application of the theory of quantum frame bundles, see \cite[Prop.~2.35]{BegMa} for details. The unique $\C_q[SU_2]$-covariant quantum metric and its Levi-Civita connection were also introduced in  \cite{Ma:spi}, and later revisited in the modern bimodule QLC form \cite{BegMa:rie,BegMa}. Explicitly,  
\[ g=q\extd z^*\tens\extd z+ q^{-1}\extd z\tens\extd z^*+ q^2(2)_q\extd x\tens\extd x,\]
\[\nabla^L\extd z=-(2)_qz g,\quad \nabla^L\extd z^*=-(2)_q z^* g,\quad \nabla^L\extd x=-((2)_q-q^{-1})g,\]
\[ R_{\nabla^L}(\del f)=q^4(2)_q{\rm Vol}\tens\del f,\quad R_{\nabla^L}(\bar\del f)=-(2)_q{\rm Vol}\tens\bar\del f,\quad 
{\rm Ricci}=-{(2)_q\over (2)_{q^2}}g\]
for a certain lift $i({\rm Vol})\in \Omega^1\tens_A\Omega^1$, see \cite[Sec.~8.2.3]{BegMa}. We take the standard real form for $q$ real and $x^*=x$. 

In practice, however, it is much easier to work `upstairs' within $\Omega^1(\C_q[SU_2])$. We take the quantum group with its standard matrix of generators $a,b,c,d$ and $\Omega^1$  free with basis $e^\pm,e^0$ and relations
\[ e^\pm f=q^{|f|}f e^\pm, \quad e^0 f=q^{2|f|}f e^0,\]
where $|f|$ is the grading defined by the number of $a,c$ minus the number of $b,d$. We used the conventions in \cite[Example~2.32]{BegMa}. We let $\del_\pm,\del_0$ be the `partial derivatives' with respect the to the basis as defined by $\extd f=\del_+ f e^++\del_- f e^-+\del_0f e^0$. Here $e^+{}^*=-q^{-1}e^-,\ e^-{}^*=-qe^+$ and $e^0{}^*=-e^0$. On the $q$-sphere we only need $e^\pm$ (not $e^0$) and we have to insert the elements $D^+=a\tens d-q^{-1}c\tens b$ and $D^-=d\tens a-q b\tens c$ for formulae to then make sense in the tensor product. Thus \cite[Example~6.5]{BegMa},
\[ g=-q^2 e^+ D^-_1D^-_1{}'\tens D^-_2{}' D^-_2e^-- e^-D^+_1D^+_1{}'\tens D^+_2{}' D^+_2e^+\]
\[ \nabla^L(\omega_\pm e^\pm)=(\del_+ \omega_\pm e^++ \del_- \omega_\pm e^-)D^\pm_1D^\pm_1{}'\tens D^\pm_2{}' D^\pm_2 e^\pm  \]
where the prime denotes an independent copy and $|\omega_\pm|=\mp 2$ so that $\omega_\pm e^\pm\in \Omega^1$. The dual basis to $e^\pm,e^0$ will be denoted $f_\pm,f_0$ and $e^\pm, f_\pm$ provides dual bases for $\Omega^1$ on the $q$-sphere when suitably interpreted with the $D^\pm$. 
The corresponding dual right connection on the left vector fields $\cX $ is
\[ \nabla_\cX (f_\pm X^\pm)=f_\pm D^\pm_1D^\pm_1{}'\tens D^\pm_2{}' D^\pm_2(\del_+ X^\pm e^++ \del_- X^\pm e^-)\]
where $|X^\pm|=\pm2$ so that $f_\pm X^\pm$ has degree zero, i.e.\ is a vector field on the $q$-sphere. The $q$-commutation relations for the basis of vector fields are
$f_\pm g=q^{-|g|}g\,f_\pm$. 
Note that the connection $\nabla_\cX$ preserves the $*$-operation by default, since it vanishes on both $f_\pm$ and $f_{\pm}{}^*$. 

We use the natural integration $\int f$ for $f\in \C_q[SU_2]$, which is known to be a twisted-trace
\[  \int f g=\int \varsigma(g)f,\quad  \varsigma(a^ib^jc^kd^l)=q^{2(l-i)a^ib^jc^kd^l}\]
and restricts to $\C_q[S^2]$ as 
\[ \int x^n={1\over [n+1]_{q^2}},\quad \int f g=\int \varsigma(g)f,\quad  \varsigma(x^p z^n z^*{}^m)=q^{2(n-m)}x^p z^n z^*{}^m.\]
This was recently used in \cite{BegMa:spe} for the Dirac operator on the $q$-sphere and we use it now as our preferred state.

\begin{proposition}  \label{yop} The geometric divergence  and the $\int$-divergence on the $q$-sphere agree, and if $X=f_\pm X^\pm$ (summing over $\pm$) then 
$\mathrm{div}(X)=q^{\mp 2} \del_\pm X^\pm$. 
 The $*$ operation from Theorem~\ref{thmXstar} is $f_+{}^*=-q^{-1} f_-$ and $f_-{}^*=-q\,f_+$. The geodesic velocity equation for $X=f_\pm X^\pm$ as a function of time is
 \[
 \dot X^\pm +\tfrac12\, [X^\pm , \mathrm{div}(X)] +((\del_+ X^\pm) X^+ + (\del_- X^\pm) X^-) =0\ .
 \]
\end{proposition}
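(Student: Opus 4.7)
The plan is to establish each of the four assertions in turn, working within $\Omega^1(\C_q[SU_2])$ with its free basis $e^\pm,e^0$ and dual basis $f_\pm,f_0$, restricted to the $q$-sphere via the central insertions $D^\pm$ already set up just before the statement.

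For the divergence formula, I would first compute $\hat\nabla = \sigma_\cX^{-1}\nabla_\cX$ applied to a general vector field $X = f_\pm X^\pm$ (sum over $\pm$, with $|X^\pm|=\pm 2$). Using the displayed formula for $\nabla_\cX(f_\pm X^\pm)$, the evaluation $\mathrm{ev}\circ\hat\sigma$ pairs $e^\pm$ against $f_\pm$ nontrivially, while the $D^\pm_1 D^\pm_1{}'\otimes D^\pm_2{}'D^\pm_2$ factors contract to scalars under the pairing on the sphere. The only nontrivial bookkeeping is that commuting $X^\pm$ through the basis in $\sigma_\cX^{-1}$ picks up $q^{\mp 2}$ from $|X^\pm| = \pm 2$ via the relations $e^\pm g = q^{|g|}g\,e^\pm$. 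This produces $\mathrm{div}_{\hat\nabla}(f_\pm X^\pm) = q^{\mp 2}\partial_\pm X^\pm$. To then invoke Proposition~\ref{propdiveq} and conclude $\mathrm{div}_{\hat\nabla} = \mathrm{div}_\phi$, I must check $\int\mathrm{div}_{\hat\nabla}(X) = 0$ on all $X\in\cX^L$. This reduces, for each sign, to showing $\int \partial_\pm g = 0$ for $g\in\C_q[S^2]$ of the appropriate grading; this follows because $\partial_\pm$ is an inner derivation (up to $q$-scaling) on the bicovariant calculus, combined with the twisted trace property of $\int$ where the $q^{\mp 2}$ factors in the divergence are exactly what is needed to match the twist $\varsigma$.

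For the $*$-structure on $f_\pm$, I would apply the formula $X^*(\xi) = (\mathrm{ev}\,\hat\sigma(X\otimes \xi^*))^*$ from Theorem~\ref{thmXstar} with $X = f_\pm$. Using the relations $e^+{}^*=-q^{-1}e^-$ and $e^-{}^*=-q\,e^+$, together with the diagonal action of $\hat\sigma$ on the basis (inherited from the flip-up-to-$q$-factor braiding of the covariant calculus), the nonzero values are $f_+^*(e^-) = -q^{-1}$ and $f_-^*(e^+) = -q$, which is exactly the claim $f_+^* = -q^{-1}f_-$ and $f_-^* = -q\,f_+$.

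For the geodesic velocity equation, I would apply Corollary~\ref{corveleq} with $\kappa = \tfrac12\mathrm{div}(X)$, as justified by the corollary at the start of Section~\ref{secstar} now that the two divergences coincide. Writing $X=f_\pm X^\pm$ and expanding $(\id\otimes X)\nabla_\cX(X)$ using the displayed formula for $\nabla_\cX$, only the pairings of $e^\pm$ against $f_\pm$ survive under $\mathrm{ev}$, and the $D^\pm$ contract trivially as above, leaving the coefficient $(\partial_+ X^\pm)X^+ + (\partial_- X^\pm)X^-$ of $f_\pm$. Combined with the commutator term $[X^\pm,\tfrac12\mathrm{div}(X)]$ coming from $[X,\kappa]$, this yields the stated equation.

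The main obstacle is the bookkeeping: keeping the $q$-gradings consistent and verifying that the $D^\pm$ insertions really do pair trivially at each step (so the formulas, which a priori live upstairs in $\C_q[SU_2]$, descend correctly to the $q$-sphere). The critical conceptual point is that the $q^{\mp 2}$ twists appearing in $\mathrm{div}$ are forced by and compatible with the twist $\varsigma$ of $\int$, which is why the geometric and state divergences can agree at all.
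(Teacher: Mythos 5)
Your proposal follows essentially the same route as the paper: compute $\hat\nabla=\sigma_\cX^{-1}\nabla_\cX$ and extract $\mathrm{div}_{\hat\nabla}(f_\pm X^\pm)=q^{\mp2}\del_\pm X^\pm$ from the grading factor $f_\pm X^\pm=q^{-|X^\pm|}X^\pm f_\pm$, invoke Proposition~\ref{propdiveq} via $\int\del_\pm X^\pm=0$, read off $f_\pm^*$ from the formula in Theorem~\ref{thmXstar} using $(e^\pm)^*=-q^{\mp1}e^\mp$ and the braiding $\sigma_\cX(e^{\pm'}\tens f_\pm)=q^{\pm'2}f_\pm\tens e^{\pm'}$, and apply Corollary~\ref{corveleq} with $\kappa=\tfrac12\mathrm{div}(X)$ for the velocity equation; this all matches the paper. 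The one place your justification would not survive scrutiny is the claim that $\int\del_\pm X^\pm=0$ holds ``because $\del_\pm$ is an inner derivation\dots on the bicovariant calculus'': the calculus here is the left-covariant 3D Woronowicz calculus (inner, via $\theta$, only in a way that produces $\del_0$-type directions, not $\del_\pm$), and the $\del_\pm$ are twisted derivations that are not inner. The correct reason --- which the paper itself only gestures at (``can be shown in several ways'') --- is invariance of the Haar functional under the $U_q(su_2)$ action, of which $\del_\pm$ are essentially the $E,F$ parts, so $\int\del_\pm f=\epsilon(E\hbox{ or }F)\int f=0$. You should also record explicitly that $\mathrm{div}(X)$ has degree $0$, which is what lets you pull it through $f_\pm$ without $q$-factors when writing $[X,\kappa]=f_\pm[X^\pm,\kappa]$; otherwise the commutator term in your final equation would acquire spurious powers of $q$.
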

\begin{proof} To find the divergence, we need the corresponding left connection on $\cX $,
\[
 \hat\nabla(v^\pm f_\pm)=(\del_+ v^\pm e^++ \del_- v^\pm e^-)D^\mp_1D^\mp_1{}'\tens D^\mp_2{}' D^\mp_2 f_\pm
\]
and then for $X=f_\pm X^\pm$ we get, summing over $\pm$, 
\begin{align*}
\mathrm{div}_{\hat\nabla}(X) &=
\ev \hat\nabla(f_\pm X^\pm)=\ev \hat\nabla(q^{-| X^\pm  |}   X^\pm  f_\pm )=\ev \hat\nabla(q^{\mp 2}  X^\pm    f_\pm ) = q^{\mp 2} \del_\pm X^\pm
\end{align*}
since the product of the $D$s in the formula for $ \hat\nabla(v^\pm f_\pm)$ simply gives 1. To show that $\mathrm{div}_{\hat\nabla}=\mathrm{div}_{\int}$ 
by 
 Proposition~\ref{propdiveq}, we need to check for all $X\in\cX $ that
\begin{align*} 
\int \mathrm{div}_{\hat\nabla}(X)=q^{\mp 2} \int \del_\pm X^\pm=0\ .
\end{align*}
The latter holds (one can even define the integral by this property). 

We also have, in the upstairs notation, omitting the $D$s, 
\[
\sigma_\cX(\extd a\tens f_\pm)=q^{|a|}\nabla_\cX(f_\pm a)= q^{|a|}\, f_\pm \tens\extd a
\]
and by substituting the partial derivatives, we find $\sigma_\cX(e^{\pm'}\tens f_\pm)=q^{\pm'2} f_\pm \tens e^{\pm'}$ (here ${\pm'}$ and $\pm$ are independent signs). Now
\[
\ev(e^{\pm'}\tens f_\pm{}^*)=\ev\sigma_\cX{}^{-1}(f_\pm\tens (e^{\pm'})^*) = - q^{\mp'1}\, \ev\sigma_\cX{}^{-1}(f_\pm\tens e^{\mp'}) =
- q^{\pm'1}\, \ev(e^{\mp'} \tens f_\pm)
\]
giving the stated answer for $f_\pm{}^*$. 

From equation (\ref{coveleq}), the geodesic velocity equation is
\[ 
0= \dot X +\tfrac12[X,\mathrm{div}(X)]+(\id\tens X)\nabla_\cX(X),
\]
where
\begin{align*}
(\id\tens X) \nabla_\cX (f_\pm X^\pm)&=(\id\tens\ev)\big(f_\pm D^\pm_1D^\pm_1{}'\tens D^\pm_2{}' D^\pm_2(\del_+ X^\pm e^++ \del_- X^\pm e^-)\tens 
f_{\pm'} X^{\pm'}\big) \\
&= f_\pm ((\del_+ X^\pm) X^+ + (\del_- X^\pm) X^-)\ ,\\
[X,\mathrm{div}(X)] &= f_\pm X^\pm\, \mathrm{div}(X) - \mathrm{div}(X)\,  f_\pm X^\pm = f_\pm\, [X^\pm , \mathrm{div}(X)]
\end{align*}
as $\mathrm{div}(X)$ has degree 0. \end{proof}

\medskip
The action of the twisting on the 1-forms is given by $\varsigma(e^\pm)=q^{\mp 2}\,e^\pm$ and thus
on their duals $\varsigma(f_\pm)=\varsigma\circ f_\pm\circ \varsigma^{-1}=q^{\pm2}\,f_\pm$. Then, summing over $\pm$, 
\[
X^*=X^{\pm*} f_{\pm}{}^*=-q^{\mp 1}X^{\pm*} f_{\mp}=-q^{\mp 3}f_{\mp} X^{\pm*} 
\]
and the condition for $X$ to be real is that $X^*=\varsigma(X)$, which is 
\[
-q^{\mp 3}f_{\mp} X^{\pm*} =\varsigma(f_{\mp}X^{\mp}) = q^{\mp 2}f_{\mp}\varsigma(X^{\mp})
\]
so the reality condition on $X$ is that 
\[ X^{\pm*} =- q^{\pm 1}\varsigma(X^{\mp}).\] 

\begin{proposition}  On the $q$-sphere, the improved auxiliary condition for preservation of reality of  a geodesic velocity field $X$ is
 \[
 [X^\pm , \mathrm{div}(X)] +((\del_+ X^\pm) X^+ + (\del_- X^\pm) X^-) -
(q^2 X^-\del_- X^{\pm}  + q^{-2}X^+ \del_+ X^{\pm} )=0 .
 \]
 \end{proposition}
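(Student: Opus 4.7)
Following the recipe stated just before the proposition and made rigorous by Corollary~\ref{corXreal}, the plan is to impose reality of both $X$ and $\dot X$, apply $*$ to the geodesic velocity equation of Proposition~\ref{yop}, and compare the result with the velocity equation of the opposite sign. Any residual discrepancy must vanish identically, and that is the improved auxiliary condition.

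The preparatory step is to tabulate how $*$ and $\varsigma$ interact with $\del_\pm$. From $(\extd f)^*=\extd f^*$ together with $e^{+*}=-q^{-1}e^-$, $e^{-*}=-qe^+$ and the $q$-commutation $e^\pm g = q^{|g|}g e^\pm$, matching coefficients of $e^+,e^-,e^0$ on each side yields $(\del_+ f)^*=-q\,\del_-(f^*)$ and $(\del_- f)^*=-q^{-1}\del_+(f^*)$. Similarly, since $\varsigma\circ\extd=\extd\circ\varsigma$ and $\varsigma$ fixes the left-invariant basis $e^\pm$ (as one checks from $\varsigma(a\extd b)=\varsigma(a)\extd\varsigma(b)$ on the explicit formulas for $e^\pm$ in Woronowicz's 3D calculus), one obtains $\del_\pm\circ\varsigma=\varsigma\circ\del_\pm$. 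Combined with the reality condition $X^{\pm *}=-q^{\pm 1}\varsigma(X^\mp)$ these give, for example, $(\del_+ X^+)^*=q^2\,\varsigma(\del_- X^-)$ and $(\del_- X^+)^*=\varsigma(\del_+ X^-)$, while $\mathrm{div}(X)^*=\varsigma(\mathrm{div}(X))$ by Theorem~\ref{thmXstar}(7).

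Now apply $*$ to the equation $\dot X^+ + \tfrac12[X^+,\mathrm{div}(X)] + (\del_+X^+)X^+ + (\del_-X^+)X^- = 0$, using $(ab)^*=b^*a^*$ and the above substitutions, then impose reality $(\dot X^+)^*=-q\,\varsigma(\dot X^-)$, divide by $-q$ and apply $\varsigma^{-1}$. This produces the alternative expression
\[
\dot X^- + \tfrac12[\mathrm{div}(X),X^-] + q^2\, X^-\del_- X^- + q^{-2}\,X^+\del_+ X^- = 0.
\]
Subtracting this from the direct velocity equation for $\dot X^-$ in Proposition~\ref{yop} cancels $\dot X^-$ and combines the commutator terms into $[X^-,\mathrm{div}(X)]$, leaving exactly the $\pm=-$ case of the stated identity; the $\pm=+$ case is the same computation starting from the $\dot X^-$ equation. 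The main obstacle is purely bookkeeping, since the $q$-exponents arise from four independent sources --- the reality twist, the interaction of $*$ with $\del_\pm$, the $q$-commutation of coefficients past $e^\pm$ and $f_\pm$, and the action of $\varsigma$ on the basis --- and must be tracked simultaneously; the cleanest safeguard is to work entirely at the level of the gradings $|X^\pm|=\pm 2$ and never expand the projectors $D^\pm$ explicitly.
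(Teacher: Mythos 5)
Your overall route is exactly the paper's: apply $*$ to the velocity equation of Proposition~\ref{yop}, substitute the reality condition $X^{\pm*}=-q^{\pm1}\varsigma(X^\mp)$, apply $\varsigma^{-1}$ to obtain a second evolution equation for the same component, and subtract. The endgame and the stated identity come out correctly. However, both of your preparatory lemmas are wrong as stated, and your composite identities only come out right because the two errors cancel. First, $\varsigma$ does \emph{not} fix the left-invariant basis: the paper records, immediately after Proposition~\ref{yop}, that $\varsigma(e^\pm)=q^{\mp 2}e^\pm$, whence $\del_\pm\circ\varsigma=q^{\mp2}\,\varsigma\circ\del_\pm$ rather than plain commutation. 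Note that $\varsigma(e^\pm)=q^{\mp2}e^\pm$ is precisely what produces $\varsigma(f_\pm)=q^{\pm2}f_\pm$ and hence the reality condition $X^{\pm*}=-q^{\pm1}\varsigma(X^\mp)$ that you invoke, so your claim that $\varsigma$ fixes $e^\pm$ is inconsistent with an input you rely on. Second, your relations $(\del_+f)^*=-q\,\del_-(f^*)$ and $(\del_-f)^*=-q^{-1}\del_+(f^*)$ have the $q$-exponents inverted relative to the paper's $\del_\pm(f^*)=-q^{\mp1}(\del_\mp f)^*$; the culprit is the $q^{|g|}$ commutation of $e^\pm$ past the coefficient functions, which carry nonzero degree even when $f$ has degree zero.

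Each of these two discrepancies is a factor of $q^{\pm2}$, and they cancel in the only combinations your argument actually uses: both your conventions and the paper's yield $(\del_+X^+)^*=q^2\varsigma(\del_-X^-)$ and $(\del_-X^+)^*=\varsigma(\del_+X^-)$, which is why your subtraction reproduces the paper's answer. The proof is therefore salvageable essentially verbatim by replacing your two auxiliary formulas with the paper's $\del_\pm(\varsigma f)=q^{\mp2}\varsigma(\del_\pm f)$ and $\del_\pm(f^*)=-q^{\mp1}(\del_\mp f)^*$, but as written the intermediate steps are false and would give wrong powers of $q$ in any other combination.
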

 \begin{proof} We calculate for $f\in A$,
\[
\partial_\pm(\varsigma f)=q^{\mp2}\,\varsigma( \partial_\pm f)\ ,\quad \partial_\pm(f^*)=-q^{\mp 1}(\partial_\mp f)^*\ ,
\]
and using this applying $*$ to the velocity equation in Proposition~\ref{yop} gives 
\begin{align*}
0 &= ( \dot X^\pm)^* -\tfrac12\, [(X^\pm)^* , \mathrm{div}(X)^*] +((X^+)^*(\del_+ X^\pm)^*  + (X^-)^*(\del_- X^\pm)^* ) \\
&= ( \dot X^\pm)^* -\tfrac12\, [(X^\pm)^* , \mathrm{div}(X)^*] - (q^{-1}(X^+)^*\del_-( X^\pm{}^*)  + q (X^-)^*\del_+( X^\pm{}^*) ) 
\end{align*}
and assuming that $X$ is real gives
\begin{align*}
0 &= \varsigma(\dot X^{\mp}) -\tfrac12\, [\varsigma(X^{\mp}) , \mathrm{div}(X)^*] - (q^{-1}(X^+)^*\del_-( \varsigma(X^{\mp}))  + q (X^-)^*\del_+( \varsigma(X^{\mp})) ) \\
&= \varsigma(\dot X^{\mp}) -\tfrac12\, [\varsigma(X^{\mp}) , \mathrm{div}(X)^*] - (q\,(X^+)^*\varsigma(\del_- X^{\mp})  + q^{-1} (X^-)^* \varsigma( \del_+ X^{\mp}) ) \\
&= \varsigma(\dot X^{\mp}) -\tfrac12\, [\varsigma(X^{\mp}) ,\varsigma\, \mathrm{div}(X)] + (q^2\varsigma(X^-)\varsigma(\del_- X^{\mp})  + q^{-2}\varsigma(X^+)\varsigma( \del_+ X^{\mp}) ) , 
\end{align*}
which can be rewritten as
\begin{align*}
0 
&= \dot X^{\pm} -\tfrac12\, [X^{\pm} ,\mathrm{div}(X)] + (q^2 X^-\del_- X^{\pm}  + q^{-2}X^+ \del_+ X^{\pm} ). 
\end{align*}
Subtracting this from the original equation in Proposition~\ref{yop} gives the result stated. \end{proof}

After solving this and the velocity equations together to find  suitable $X_t$, we then have to solve for $e_t\in \C_q[S^2]$ obeying the amplitude flow equation $\nabla_E e=0$, which now appears as
\begin{equation} \dot e+(\del_\pm e) X_t^\pm +\tfrac12\,e\,  \mathrm{div}(X)=0.\end{equation}
This derives the various quantum geodesic equations on the $q$-sphere. Actual solutions will be given elsewhere, possibly at lower deformation order after understanding classical geodesic flows better in our formalism. 

\section{Concluding Remarks}\label{secrem}

We have refined the formalism of quantum geodesics to include a prescription for the all-important $*$ operation on vector fields, which was previously not canonical. Provided the `measure' or positive linear functional $\int$ is a twisted trace (e.g.  an actual trace in the usual sense),  we provided a canonical construction, making the application of the formalism much more straightforward. We also saw that the original auxiliary braid condition (\ref{oldaux}) was too strong, understood its role in preservation of the reality of geodesic velocity field as it evolves, and provided a general construction in Section~\ref{secpre} for the minimal such condition that does this. The possibility of an external driving force  bimodule map $\alpha$ or left quantum vector field $Y$  arises naturally in this context discussion as something to be set to zero. Applications with these nonzero are a direction for further work suggested here.  

The formalism was then checked out in our three nontrivial examples, each with its own section. The example of quantum geodesics on $M_2(\C)$ with its 2-dimensional calculus and  metric $g=s\tens s+t\tens t$ was given in detail only for a specific flat QLC  at $\rho=\imath$. Other $\rho$ here could equally well be looked at. Moreover, this is just one metric on $M_2(\C)$ and another with an equally rich known moduli space of QLCs  is $g=s\tens t-t\tens s$ in \cite[Example~8.21]{BegMa}. The fuzzy sphere case was also found to work much as expected but geodesic flows in general were quickly found to be non-polynomial in the algebra generators, i.e. would need a functional analytic treatment beyond our scope here (but hardly surprising). However,  the theory reduces to the finite-dimensional quotient fuzzy spheres and we solved for geodesics on the lowest dimension nontrivial $M_2(\C)$ fuzzy sphere (but with a different, 3-dimensional calculus). Higher spin reductions and the full theory using $C^*$-algebras would be an important, but not easy, topic for further work. We also did the work of setting up the velocity equations etc.  for the standard $\C_q[S^2]$ but we did not actually solve it as even the classical case here needs to be much better understood. Here, working `upstairs' on $\C_q[SU_2]$ in the $q$-Hopf fibration amounts to the velocity vector field components $X^\pm$ being sections of the $q$-monopole bundle of charges $\pm2$.  

We also made and explored a new approach to the Ricci tensor now as a kind of `quadratic form' on vector fields. This is motivated by the convective derivative of ${\rm div} X$ in the case where $X$ is a geodesic velocity field but the resulting $R(X)$ is defined for any left quantum vector field $X\in{}_A{\rm hom}(\Omega^1,A)$ and studied as such in our examples. In both the matrix and fuzzy sphere cases, we saw some aspects in line with our earlier `working definition' for Ricci\cite{BegMa} but with additional terms, see notably Proposition~\ref{propRfuz}. This should be similarly explored on a larger variety of curved quantum spacetimes, including the discrete $n$-gon $\Z_n$ with lengths on the edges providing the known quantum Riemannian geometry here, or on noncommutative black hole and FLRW cosmological models as in \cite{ArgMa}. 

More widely, the theory of quantum geodesics should be developed to include geodesic deviation. We have set this up in Section~\ref{secgeodev} at the classical level. In fact, our approach provides a new way of thinking about geodesics even on a classical manifold, which deserves to be developed much further as a new tool in ordinary GR. It is also the case that throughout the paper, we took $B=C^\infty(\R)$ for the geodesic parameter space. The same formalism with $B=C^\infty(N)$ provides a theory of `totally geodesic submanifolds' of  a classical or quantum space expressed in the algebra $A$. This remains to be explored as does the construction of examples where $B$ is some other quantum geometry. As such, it need not even have a `manifold dimension' or could be finite (i.e. the algebra $B$ could be finite-dimensional). We do not know if such generalisations would be interesting but the point is that the abstract approach to geodesics based on $A$-$B$-bimodule connections and $\doublenabla(\sigma_E)$ is both  powerful and little explored even in classical geometry. These are some of many possible directions for further work. 


\section*{Declaration}

Data sharing is not applicable as no datasets were generated or analysed during the current study. The authors have no competing interests to declare that are relevant to the content of this article. No funds, grants or other support was received.

\end{document}